\newcommand{\field}[1]{\mathbb{#1}}
\newcommand{\R}{\field{R}}
\newcommand{\N}{\field{N}}
\newcommand{\E}{\field{E}}
\newcommand{\pl}[1][]{\mathcal{P}_{#1}}
\newcommand{\rl}[1][]{\mathcal{R}_{#1}}
\newcommand{\rtl}[1][]{\tilde{\mathcal{R}}_{#1}}
\newcommand{\po}{\mathcal{P}_{0}}
\newcommand{\pj}{\mathcal{P}_{j}}
\newcommand{\nul}{\nu} 
\newcommand{\numes}{\boldsymbol{\nu}} 
\newcommand{\hnuml}[1][m]{\hat{\nu}_{#1}} 
\newcommand{\numl}[1][m]{\nu_{#1}}
\newcommand{\mul}{\mu}
\newcommand{\rhol}{\rho}
\newcommand{\xil}{\xi}
\newcommand{\Au}{\tilde{A}_{x ,y}}
\newcommand{\A}{A_{x ,y}}
\newcommand{\Bu}{\tilde{B}_{x ,y}}
\newcommand{\B}{B_{x ,y}}
\newcommand\nbOne{{\mathchoice {\rm 1\mskip-4mu l} {\rm 1\mskip-4mu l}
		{\rm 1\mskip-4.5mu l} {\rm 1\mskip-5mu l}}}
\newcommand\units[1]{\nbOne_{\{#1\}}}
\newcommand{\norm}[1]{\left\Vert #1 \right\Vert}
\newcommand{\Vl}{\mathbf{V}_{\lambda}}
\newcommand{\Vb}{\mathbb{V}_{b}}
\newcommand{\Cl}{C_{\lambda}}
\newcommand{\cl}{c_{\lambda}}
\newcommand{\Gl}{G_{\lambda}}
\newcommand{\rond}[1]{\mathscr{#1}}
\newcommand{\EE}[2][z_0]{\mathbb{E}_{#1}\left(#2\right)}
\newcommand{\Ec}[2]{\mathbb{E}\left(\left. #1\right\vert #2\right)}
\newcommand{\El}[2]{\mathbb{E}^{#1}\left(#2\right)}
\newcommand{\PP}[2][]{\mathbb{P}_{#1}\left(#2\right)}
\newcommand{\Pl}[2]{\mathbb{P}^{#1}\left(#2\right)}
\newcommand{\Pc}[2]{\mathbb{P}\left(\left. #1 \right\vert #2 \right)} 
\newcommand{\ep}{\varepsilon}
\newcommand{\Var}[2][z_0]{\operatorname{Var}_{#1}\left(#2\right)}
\newcommand{\Cov}[2][]{\operatorname{Cov}_{#1}\left(#2\right)}
\newcommand{\OO}[1]{\mathrm{O}\left(#1\right)}
\newcommand{\ft}{s}
\newcommand{\Dg}{\mathbf{D}}
\newcommand{\Dge}{\hat{\mathbf{D}}_n}
\newcommand{\mg}{\mathbf{m}}
\newcommand{\Mg}{\mathbf{M}}
\newcommand{\Lg}{\mathbf{L}}
\newcommand{\llg}{\mathbf{l}} 
\newcommand{\Besov}{\ensuremath{\mathbf{\mathit{B}}}}
\newcommand{\holder}[1][\alpha]{\ensuremath{H^{#1}}}
\newcommand{\cons}{a}
\newcommand{\nb}{p_n}
\newcommand{\tq}[1][g]{\mathcal{Q}_{#1}}
\newcommand{\Fcb}{\mathcal{E}(\cf,b,\alpha)}
\newcommand{\cf}{\mathfrak{s}} 
\newcommand{\n}{\mathbf{n}}
\newcommand{\rg}{\mathbf{r}}
\theoremstyle{plain}
\newtheorem{theorem}{Theorem} 
\newtheorem{definition}[theorem]{Definition}
\newtheorem{corollary}[theorem]{Corollary}
\newtheorem{lemma}[theorem]{Lemma}
\newtheorem{prop}[theorem]{Proposition}
\newtheorem{assumption}{Assumption A\!\!}{\bf}{\rm}
\newtheorem*{assumptiond}{Assumption}{\bf}{\rm}
\newtheorem*{result}{Result}
\theoremstyle{remark}
\newtheorem*{remark}{Remark}
\newcounter{numero} 
\begin{document}

\title{Nonparametric estimation of jump rates for a specific class of piecewise deterministic Markov processes}

\author{ N. Krell\thanks{Universit\'e de Rennes 1,
 Institut de Recherche math\'ematique de Rennes,
CNRS-UMR 6625, Campus de Beaulieu.
 B\^atiment 22, 35042 Rennes Cedex, France. {\tt email}: nathalie.krell@univ-rennes1.fr}
\\ E. Schmisser\thanks{Laboratoire Paul Painlevé
Université des Sciences et Technologies de Lille,
Bureau 314, Bâtiment M3, Cité Scientifique,
59 655 Villeneuve d'Ascq Cedex {\tt email}: emeline.schmisser@math.univ-lille1.fr}}

\maketitle

\begin{abstract}
	
	In this paper, we consider a  unidimensional piecewise deterministic Markov process (PDMP), with homogeneous jump rate $\lambda(x)$ . This process is observed continuously, so the flow $\phi$ is known.  To estimate nonparametrically the jump rate, we first construct an adaptive estimator of the stationary density, then we derive a quotient estimator $\hat{\lambda}_n$ of $\lambda$. Under some ergodicity conditions, we  bound the risk of these estimators (and give a uniform bound on a small class of functions), and prove that the estimator of the jump rate is nearly minimax (up to a $\ln^2(n)$ factor). The simulations illustrate our theoretical results.
	
\end{abstract}

\noindent {\it Keywords:} Piecewise deterministic Markov processes, model selection, nonparametric estimation

\noindent {\it Mathematical Subject Classification:} 62G05, 62G07, 62M05, 60J25

\section{Introduction}

Piecewise deterministic Markov processes are a large class of continuous-time stochastic models first introduced by \citet{Dav93}. They are used to model deterministic phenomenons in which randomness appears as point events. They are not diffusions,  which adds complexity to their study.  This family of stochastic processes is well adapted to model various problems in biology (see for instance \citet{cloez_genadot_2017}, \citet{MR3295786}), neuroscience (\citet{hopfnerbrodda2006}, \citet{renault_thieullen_trelat_2017}), physics (\citet{blanchard_95}), reliability (\citet{saporta_dufour_zhang}),  optimal consumption and exploration (\citet{farid_davis_99}), risk insurance, seismology,\ldots. See also the references in the survey \citet{azais_bardet_2014}.

In this article, we consider 
a filtered  piecewise deterministic Markov process (PDMP) $(X_t)_{t\geq 0}$ taking values in $\R^+$, with flow $\phi$, transition measure $Q(x,dy)$ and homogeneous jump rate $\lambda(x)$. Starting from initial value $x_0$, the process follows the flow $\phi$ until the first jump time $T_1$ which occurs  spontaneously in a Poisson-like fashion with rate $\lambda(\phi(x,t))$. The post-jump location of the process at time $T_1$ is governed by the transition distribution $Q(\phi(x_0,T_1),dy)$ and the motion restarts from this new point as before.

To fix the ideas, let us consider two major examples of unidimensional PDMP.

The TCP  (transmission control protocol) (see \citet{DGR02},  \citet{GRZ04} for instance) is one of the main data transmission protocol in Internet. The maximum number of packets that can be sent at time $t_k$ in a round is a random variable $X_{t_k}$. If the transmission is successful, then the maximum number of packets is increased by one:  $X_{t_{k+1}}=X_{t_k}+1$. If the transmission fails, then we set $X_{t_{k+1}}=\kappa X_{t_k}$ with  $\kappa\in(0,1)$. A correct scaling of this process leads to a piecewise deterministic Markov process  $(X_t)$ with flow $\phi(x,t)=x+ct$ and deterministic transition measure $Q(x,{y})=\nbOne_{\{y=\kappa x\}}$. 
 This process grows linearly (by construction) and the constant $\kappa$ can be configured in the server implementation (so it is also known), but the moment when the transmission fails is of course unknown. In the literature it is usually supposed that the jump rate satisfies $\lambda (x)=x$, but with this work we can check whether it is a realistic assumption or not.

Another example of PDMP is the size of a marked bacteria (see \citet{doumic_hoffmann_2015, hof}, \citet{laurencot_perthame_2009}). We randomly choose a bacteria, and follow its growth, until it divides in two. Then we randomly choose one of its daughters, and so on.  Between the jumps, the bacteria grows exponentially: $\phi(x,t)=xe^{ct}$. The size of the bacteria after the division is random, as the bacteria does not divide itself in two equal parts.

The process $(X_t)$ is observed continuously without errors\:   (so the flow $\phi$ is known); it is assumed to be ergodic, with fast convergence toward the stationary measure, and exponentially $\beta$-mixing. We denote by $(T_1,\ldots,T_n)$ the jump times   and consider the Markov chain  $(Z_0=x_0,(Y_k=X_{T_k^-},Z_k=X_{T_k})_{k\in\N})$. Our aim is to construct a non-parametric adaptive estimator of the jump rate $\lambda$ on a compact interval.

There exist few results concerning PDMP's estimation. \citet{azais_dufour_gegout_2014} and  \citet{azais2016} consider a more general model, for a multidimensional PDMP. They construct a quotient of kernel estimators, which estimate the compound function $\lambda(\phi(x,t))$.  Their estimator is consistent (\cite{azais_dufour_gegout_2014}), asymptotically normal, and its pointwise rate of convergence depends on the bandwidth of the kernel (see \cite{azais2016}). They explain how to construct an adaptive estimator, but do not bound its risk. 

\citet{doumic_hoffmann_2015} and  \citet{hokrlo} also consider multi-dimensional PDMPs but for very specific biological models. 
 
\citet{takayuki2013} and \citet{kre2} both consider unidimensional PDMP, and provide estimators of $\lambda(x)$. 
\cite{takayuki2013}  constructs an estimator of $\lambda(x)$ thanks to a Rice formula, by estimating local times. He proves the consistency of his estimators.  \cite{kre2} considers a deterministic transition measure (so $Y_k$ is a function of $Z_k$). Her estimator of $\lambda$ is  a quotient of a kernel estimator of the stationary density of  $Z_k$ and an empirical estimator $\hat{\mathbf{D}}_n$ of another function $\Dg$ with the parametric rate of convergence $n^{1/2}$. This nonparametric estimator is asymptotically normal, and bounds for the pointwise risk are provided. 
In a very recent article, \citet{azais_genadot_2018} construct a nonparametric estimator of $\lambda(x)$ for a multidimensional PDMP and prove its consistency.

This article is an extension of the work of \cite{kre2}. We consider a wider class of models (in particular, the transition measure $Q$ does not need to be deterministic any more). We bound the $L^2$ risk of the adaptive estimator, whereas \cite{kre2} only considers the pointwise risk of the nonparametric estimator with fixed bandwidth $h$. We also prove that our estimator is minimax (up to a $\ln^2(n)$ factor). 

 For this purpose,  in analogy with \cite{kre2}, we use the equality
\[\lambda(x)=\frac{\nul(x)}{\Dg(x)}\]
where $\nul$ is the stationary density of pre-jump locations $Y_k$ (see Assumption A2 for the existence of this stationary density) and  $\Dg$ a  function  defined in equation (\ref{def_D}). We get an estimator   $\Dge(x)$, which converges with rate $n^{1/2}$. To estimate the density function $\nul$, we use a projection method. We obtain a series of estimators $(\hnuml[0],\hnuml[1],\ldots,\hnuml,\ldots)$ of $\nul$.  
Then we choose the "best" estimator by a penalization method, in the same way as \citet{barronbirge1999}, and give an oracle inequality for the adaptive estimator $\hnuml[\hat{m}]$. The constant in the penalty term is intractable, but  can be estimated thanks to a slope heuristic. Finally, we construct a quotient estimator  of $\lambda$, $\hat{\lambda}=\hnuml[\hat{m}]/\Dge $, and  bound its $L^2$-risk. 
In Section \ref{section_PDMP}, we specify the model and its assumptions. The main results are stated in Section \ref{section_estimation_lambda}.  Proofs are gathered in Section \ref{section_proofs} and in Appendix A  for the technical results.  In Appendix B, some   simulations for the TCP protocol and the bacterial growth are provided, with various functions $\lambda$. The outcomes are consistent with the theoretical results.

\section{PDMP}\label{section_PDMP}

A piecewise deterministic Markov process (PDMP) is defined by its local characteristics, namely, the jump rate $\lambda$, the flow $\phi$ 
 and the transition measure $Q$ according to which the location of the process is chosen after the jump. In this article, we consider  a unidimensional PDMP $\{X(t)\}_{t\geq 0}$. More precisely, 
 
 \begin{assumption}\label{hypopdmp}$\;$
\begin{enumerate} 

\item The flow $\phi:\R^+ \times\R^+ \mapsto \R^+$ is a one-parameter group of homeomorphisms:
$\phi$ is $\mathcal{C}^1$, for each $t\in\R^+$, $\phi (., t)$ is an homeomorphism  satisfying the semigroup property: $\phi(., t+
s) = \phi (\phi(., s), t) $ and for each $x\in\R^+$, $\phi_x (.):=\phi (x, .)$ is an increasing $\mathcal{C}^1$-diffeormorphism.
This implies that $\phi(x,0)=x$. 

\item The jump rate $\lambda :\R^+ \rightarrow \R^+$ is  a measurable function
satisfying
\[\forall x\in \R^+ ,\;\; \exists\:\varepsilon'>0\:\; \text{such that}\:\; \int_{0}^{\varepsilon'}\lambda(\phi (x,s))ds<\infty \]
that is, the jump rate does not explode. 
\item $\forall x\in\R^+$, $Q(x,\R^+\setminus \{x\})=1$. 
\end{enumerate}
\end{assumption}
For instance, we can take $\phi(x,t)=x+ct$ (linear flow) or $\phi(x,t)=xe^{ct}$ (exponential flow). The transition measure may be continuous with respect to the Lebesgue measure or deterministic ($Q(x,\{y\})=\nbOne_{\{y=f(x)\}}$).
 
Given these three characteristics, it can be shown (\citet[p62-66]{Dav93}), that there exists
a filtered probability space  $(\Omega , \mathcal{F},\{\mathcal{F}_{t}\}, \{\mathbb{P}_{x}\})$ such that the motion of the
process $\{X(t)\}_{t\geq 0}$ starting from a point $x_0 \in \R^+$ may be constructed as follows.
Consider a random variable $T_{1}$ with survival function
\begin{equation}\Pc{T_{1}>t}{X_0=x_0}=
   e^{-\Lambda (x_0,t)}  , \text{ where }  \Lambda (x,t)=\int_{0}^{t}\lambda (\phi (x,s))ds.
\label{t1}
\end{equation}
 If $T_{1}$ is equal to infinity, then the process $\{X(t)\}_{t\geq 0}$ follows the flow, i.e. for
$t\in \R^{+}$, $X(t) = \phi (x_0, t)$. Otherwise let $Y_{1}=\phi (x_0,T_{1}^-)$ the pre-jump location and $Z_1$ the post-jump location. $Z_1$ is defined through the transition kernel $Q$: 
$\Pc{Z_1\in A}{Y_1=y}=\int_A Q(y,dz)$. 
 The trajectory of $\{X(t)\}$ starting at
$x_0$, for $t \in [0,T_{1}]$, is given by
\[
X(t)=
\begin{cases}
\phi (x_0,t) &\text{for } t<T_{1},\\
Z_{1}      &\text{for } t= T_{1}.
\end{cases}
\]
Inductively starting from $X(T_{n}) = Z_{n}$, we now select the next inter-jump time $T_{n+1}- T_{n}$
and post-jump location $X(T_{n+1}) = Z_{n+1}$ in a similar way.
This construction properly defines a strong Markov process $\{X(t)\}_{t\geq 0}$   with jump times $\{T_{k}\}_{k\in\N}$
(where $T_{0} = 0$).  A very natural Markov chain is linked to $\{X(t)\}_{t\geq 0}$, namely the jump chain
$\{Y_n,Z_{n}\}_{n\in\N}$ (or, equivalently, $\{T_n,Z_n\}_{n\in\mathbb{N}}$).

\begin{figure}[h]
\caption{Examples of simulations of processes $\{X_t\}_{t\geq 0}$ and $\{Z_k\}_{k\in\mathbb{N}}$}

\begin{tabular}{cc}
TCP protocol& Bacterial growth\\
\includegraphics[width=0.45\linewidth, height=4cm]{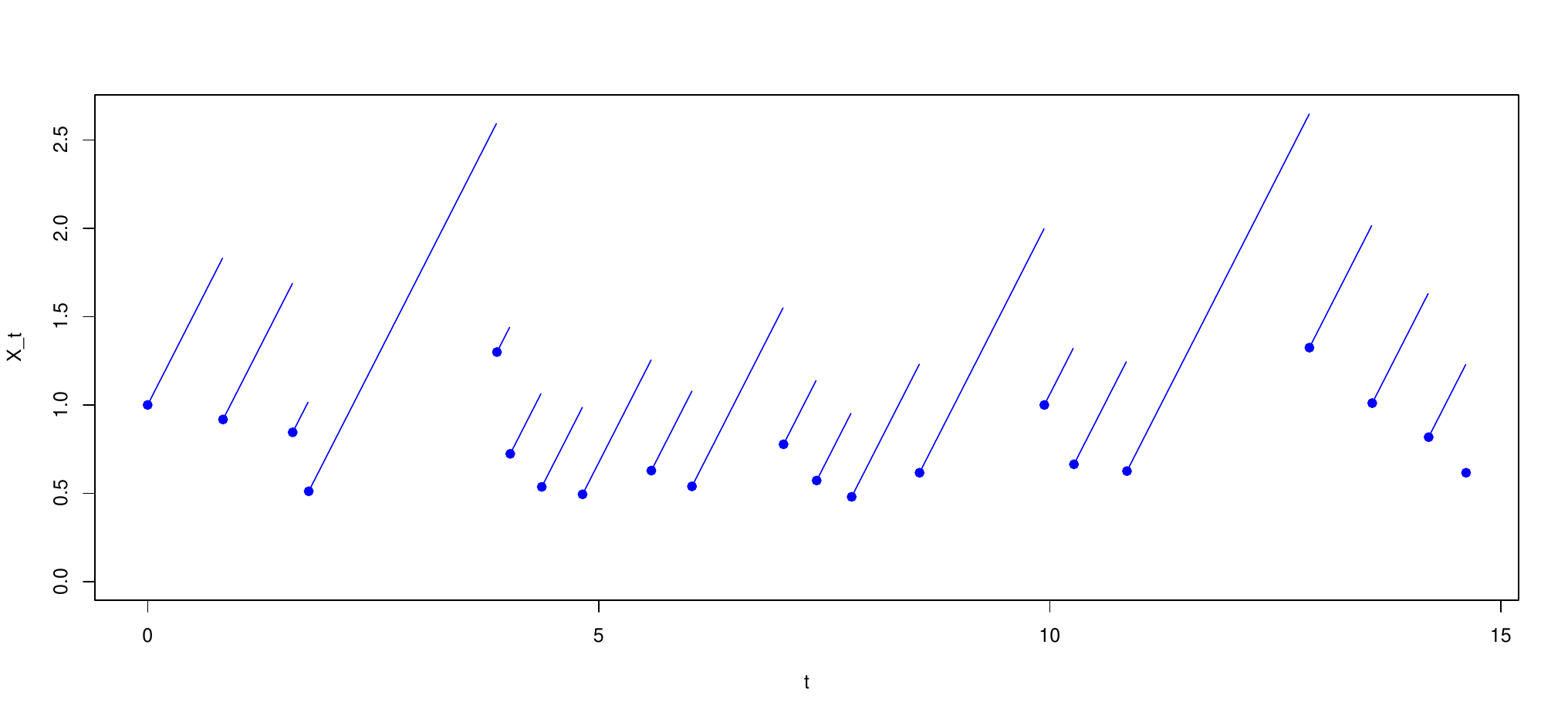}&
\includegraphics[width=0.45\linewidth, height=4cm]{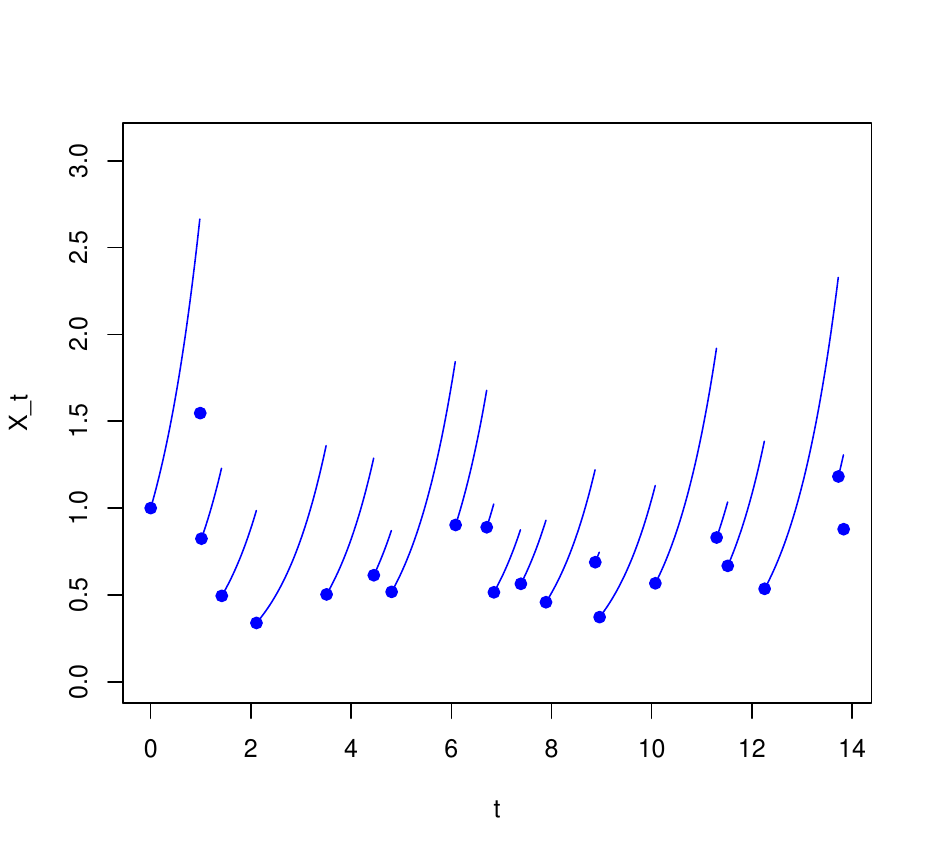}\\
$\phi(x,t)=x+t$, $Z_{k}=Y_{k}/2$, & $\phi(x,t)=xe^t$, $Z_{k}=Y_{k}U$, $U\sim\beta(20,20)$,\\ 
$\lambda(x)=\sqrt{x}$&$\lambda(x)=x^2$
\end{tabular}
\begin{center} 
\textcolor{blue}{$\bullet$ : process $\{Z_k\}_{k\in\mathbb{N}}$} $\quad $ \textcolor{blue}{ $-$ : process $\{X(t)\}_{t\geq 0}$}
\end{center} 
\end{figure}

To simplify the notations, let us set $\phi_x(t)=\phi(x,t)$ and $z_0=x_0$. 
By \eqref{t1}, 
\begin{align*}
\Pc{Y_1> y}{Z_0=z_0}&=
\Pc{T_1> (\phi_{z_0})^{-1}(y)}{Z_0=z_0}\\
&= \exp\left(-\int_0^{(\phi_{z_0})^{-1}(y)} \lambda(\phi_{z_0}(s))ds\right)\nbOne_{\{y\geq z_{0}\}}
\end{align*}
and by the change of variable $u=\phi_{z_0}(s)$ (we recall that for any $z\in\R^+$, $\phi_{z}$ is a monotonic function), we get 
\begin{equation}
\Pc{Y_1> y}{ Z_0=z_0}=\exp\left(-\int_{z_0}^{y}\lambda(u)\left(\phi^{-1}_{z_0}\right)'(u)du\right)\nbOne_{\{y\geq z_{0}\}}\label{fonction_repartition_t1}.
\end{equation} 
If the function $\lambda(y)(\phi_{z_0}^{-1})'(y)$ is finite, we obtain the conditional density:
\begin{equation}\pl(z_0,y):=\lambda (y)(\phi^{-1}_{z_{0}})'(y)e^{-\int_{z_{0}}^{y}\lambda (u)(\phi^{-1}_{z_{0}})'(u)du}\nbOne_{\{y\geq z_{0}\}}.\label{densitet1}\end{equation} 
By analogy, we set $\pl(z_0 ,dy)=\Pc{Y_1\in dy}{Z_0=z_0}$.

Our aim is to estimate the jump rate $\lambda$ on the compact interval $\mathcal{I}:=[i_1,i_2]\subset (0,\infty)$.

The ergodicity is often a keystone in statistical inference for Markov processes. We also assume fast convergence toward the stationary density.  
\begin{assumption}\label{hypo_contraction}$\;$
\begin{enumerate}
\item\label{hypo_contraction_pl_densite} The jump rate does not explode before $i_2$: for all $x\leq i_1$, $\int_0^{i_1}\lambda(y)(\phi_x^{-1})'(y)dy<\infty$ and $\sup_{y\in[i_1,i_2]} \lambda(y)<\infty$. 
\item\label{hypo_contraction_ergodicite} The process $(Y_k,Z_k)$ is recurrent positive and strongly
ergodic.  We denote by $\numes$  the stationary measure of
$Y_k$,  by
$\mul$ that of $Z_k$, by $\rhol$
the stationary measure of the couple $(Y_k,Z_k)$ and by $\xil$ that of 
$(Z_{k},Y_{k+1})$. We have that:
\begin{gather}\mul(dz)=\int_{\R^+} \numes(dy)Q(y,dz)=\int_{\R^+}
\rhol(dy,dz),\quad \rhol(dy,dz)=\numes(dy)Q(y,dz),\nonumber\\
\numes(dy)=\int_{\R^+}
\xil(dx,dy)=\int_{\R^+}\pl(z,dy)\mu(dz),\quad
\xil(dz,dy)=\mul(dz)\pl(z,dy). \label{eq_defi_mes_stat}
 \end{gather}
\item\label{hypo_contraction_V} There exist a function $\Vl$ greater than 1, two constants 
$\gamma\in]0,1[$,  $R\in\R^{+*}$ such that, for any function 
$\psi:(\R^+)^2\mapsto\R^+$, $\vert \psi\vert \leq \Vl$, for any 
integer $k$:  
\[\left\vert \Ec{\psi(Y_k,Z_k)}{Z_0=z_{0}}-\EE[\rhol] {\psi(Y_1,Z_1)}\right\vert \leq R\Vl(z_{0})\gamma^k.\]
The inequality $\vert \psi\vert \leq \Vl$ means that, for any $(y,z)\in(\R^+)^2$, $\vert \psi(y,z)\vert \leq \Vl(z)$. This inequality is true in particular for any function $\psi$ bounded by 1 and for $\psi(y,z)=\Vl(z)$. 
\end{enumerate}
\end{assumption}
Under Assumption A\ref{hypo_contraction}\ref{hypo_contraction_pl_densite}, the conditional measure $\pl$ is continuous with respect to the Lebesgue measure on $[0,i_2]\times [i_1,i_2]$ and 
$\sup_{x,y\in [0,i_2]\times [i_1,i_2]} \pl(x,y)<\infty$. 
So is  $y\to \nul(y)$: $\numes(dy)=\nul(y)dy$.
 Moreover, 
\[ \sup_{y\in[i_1,i_2]} \nul(y)=\sup_{y\in[i_1,i_2]} \int_0^{i_2} \pl(z,y)\mu(dz)<\infty.\]
We can also remark that, for any $x>0$, 
$\vert \EE[\mul]{\Vl(Z_1)}\vert \leq \Vl(x)+R\Vl(x)<\infty$. 

Let us set $\EE[z_0]{U}=\Ec{U}{Z_0=z_0}$. Under Assumption A\ref{hypo_contraction}, the empirical mean is close to its expectation under the stationary density, as shown by the following lemma (proved in the Appendix). 

\begin{lemma}\label{cor_majoration_variance}
	Under  Assumptions A\ref{hypopdmp}-A\ref{hypo_contraction}, for any bounded function $\ft$: 
	\[\left\vert \EE[z_0]{\frac{1}{n}\sum_{k=1}^n \ft(Y_k,Z_k)}-\int s(y,z)\rhol(dy,dz)\right\vert \leq \norm{s}_{\infty}\frac{R\Vl(z_0)}{n(1-\gamma)}\]
	and
	\begin{multline*}
	\Var[z_0]{\frac{1}{n}\sum_{k=1}^n\ft(Y_k,Z_k)}  \leq \frac{1}{n}\int \ft^2(y,z)\rhol(dy,dz)\\
	+\frac{\norm{s}_{\infty}}{n}
	\int \vert \ft(y,z)\vert  \Gl(z)\rhol(dy,dz)+\frac{\cl\norm{\ft}_{\infty}^2}{n^2}
	\end{multline*}
	where 
	$\Gl(z)=\frac{R}{1-\gamma}\left(\Vl(z)+\int \Vl(u)\mul(du)  \right)
	$ and $\cl$ depends explicitly on $(\gamma,R,\Vl)$. We can remark that $\Cl:=\int \Gl(z)\mul(dz)=\frac{2R}{1-\gamma}\int \Vl(z)\mul(dz)$. 
\end{lemma} 

In the bound of the variance, the first term 
is the same as for i.i.d variables. The second term is due to covariance terms (we found a similar term for stationary $\beta$-mixing processes), the third  comes from the non-stationarity of the  random vectors $(Y_k,Z_k)$. 

To study an adaptive estimator of $\nul$, we need to prove that the Markov chain $(Y_k,Z_k)$ is weakly dependent. It is the case if the process is $\beta$-mixing. 
\begin{definition}\label{def_mixing}
	Let $(X_k)_{k\geq 0}$ be a Markov process. Let us define the $\sigma$-algebra \[\rond{O}_a^b=\sigma(\{ X_{j_1}\in I_1,\ldots,X_{j_\mathbf{n}}\in I_{\mathbf{n}}\},a\leq j_1\leq \ldots\leq j_{\mathbf{n}}\leq b, \mathbf{n}\in \N, I_k\in \mathcal{B}(\R^+ )).\]
	The $\beta$-mixing coefficient of the Markov chain $(X_k)$ is 
	\[\beta_X(t)=\sup_k\sup_{E\in \rond{O}_{0}^k\times\rond{O}_{t+k}^{\infty}} \vert P_{ \rond{O}_{0}^k,\rond{O}_{t+k}^{\infty}}(E)-P_{ \rond{O}_{0}^k}\otimes P_{\rond{O}_{t+k}^{\infty}}(E)\vert \]
	where $P_{\rond{O},\rond{S}}$ is the joint law of an event on $\rond{O}\times\rond{S}$. 
	The $\beta$-mixing coefficient characterizes the dependence between what happens before $T_k$ and what happens after $T_{t+k}$.
	The process $(X_k)_{k\geq 0}$ is $\beta$-mixing if  $\lim_{k\rightarrow \infty}\beta_X(k)=0$. It is exponentially (or geometrically) $\beta$-mixing if there exists two positive constants $c$, $\beta$ such that $\beta_X(k)\leq ce^{-\beta k}$. 
\end{definition}

The following lemma is a consequence of Assumption A\ref{hypo_contraction}.  It   is proved in the Appendix. 
\begin{lemma}\label{cor_betageometricmixing}
	Under Assumptions A\ref{hypopdmp}-A\ref{hypo_contraction}, the Markov chain $(Y_k,Z_k)$ is geometrically $\beta$-mixing. Moreover, its $\beta$-mixing coefficient satisfies: $\forall k\in\N$:  
	\[ \beta_{Y,Z}(k)\leq c\gamma^k\quad \text{where}\quad c=R\int \Vl(z)\mul(dz)+R(1+R)\Vl(x_0).\]
\end{lemma}

 Estimating directly $\lambda$ is difficult, but we can construct a quotient estimator. 
By \eqref{fonction_repartition_t1} and \eqref{densitet1}, we get that, for any $y\in\mathcal{I}$, 
\begin{align*}\lambda(y)(\phi_{z_0}^{-1})'(y)\units{z_0\leq y}\Pc{Y_1> y}{Z_0=z_{0}}&=\pl(z_0,y)\\\
\lambda(y)\Ec{\units{Z_0\leq y< Y_1}(\phi_{Z_0}^{-1})'(y)}{Z_0=z_{0}}&=\pl(z_0,y)
\end{align*}
and we integrate with respect to the stationary distribution $\mul$ of $Z_0$
\[ \lambda(y) \EE[\xil]{\left(\phi^{-1}_{Z_0}\right)'(y)\units{Z_0\leq y< Y_1}} =\int \pl(z,y)\mu(dz) = \nul(y)\]
recalling that 
 $\xil$ is the stationary measure of the couple
$(Z_{0},Y_{1})$. 
Let us set 
\begin{equation} \Dg(y):=\EE[ \xil]{(\phi^{-1}_{Z_0})'(y)\nbOne_{\{Z_0\leq y< Y_1\}}}.\label{def_D}
\end{equation}
Then, if $\Dg(y)>0$,  we get: 
\begin{equation}\lambda(y)= \frac{\nul(y)}{\Dg(y)}.\label{formulel}\end{equation}  
It remains to ensure that  $\Dg(y)>0$ on  $\mathcal{I}=[i_1,i_2]$. 

\begin{assumption}\label{hypo_D_positif}
	There exists $D_0>0$ such that 
\[\inf_{y\in\mathcal{I}}\Dg(y)\geq D_0 >0.\] 
\end{assumption}

\begin{remark}
Assumption A\ref{hypo_D_positif} is very natural; indeed, let us set 
$\Phi_0:=\inf_{x\leq i_2,y\in\mathcal{I}} (\phi_x^{-1})'(y)$.
As $\phi_x$ is invertible, and $\phi_{\cdot}'(\cdot)$ is
continuous, $\Phi_0>0$. Then 
\[\Dg(y)\geq \Phi_0 \PP[\xi]{Z_0\leq y< Y_1}.\]
If the probability $\PP[\xi]{Z_0\leq y< Y_1}$ is null, then under the stationary distribution, the probability that $(X_t)$ passes through $y$ is null and  the jump rate at that point can not be measured. 

We can remark that if  $\mathbf{D}>0$ for some point $y$, then so is $\mathbf{P}(1_{Z_0\leq y\leq Y_1})>0$ and its estimator
 \[\hat{D}_n(y)=\frac{1}{n}\sum_{k=1}^n (\phi_{Z_k^{-1}})'(y)1_{Z_{k-1}\leq y\leq Y_k}>0.\] Then if we take an interval $[\hat{i}_1,\hat{i}_2]$ such that for some $n$, and some observation $(X_t)_{t\geq 0}$, $\hat{D}_n$ is positive on this interval, then Assumption A3 is satisfied on $[\hat{i}_1,\hat{i}_2]$. However, the true value of $D_0$ is unknown in that case. 
It should be noted that the interval $[\hat{i}_1,\hat{i}_2]$ should not be changed for each simulation, otherwise the convergence of the estimator on the whole interval can not be guaranteed (the interval of estimation would become larger and larger, and as $D$ is smaller on the edges on the new interval, and the convergence of the estimator is therefore slower). 
\end{remark}

  Assumptions A\ref{hypo_contraction} and A\ref{hypo_D_positif} are not explicit in $(\lambda,Q,\phi)$, so it is not easy to check that a particular model satisfies those assumptions. We give some explicit sufficient conditions on the coefficients $(\lambda,Q,\phi)$. For the next assumption, we use  the Hölder  spaces $H^{\alpha}$, as  defined in Appendix \ref{beshol}.

  \begin{assumptiond}[S] $\;$
  \begin{enumerate}
  \item\label{hypo_Q} The transition kernel is a contraction mapping:  there exists $\kappa<1$, 
 such that $\PP{Z_1\leq \kappa Y_1}=1$. 
\item\label{hypo_phi_borne} The flow is bounded: there exist two functions $\mg$ and $\Mg$ such that, $\forall x,y\in(\R^+)^2$: 
\[0<\mg(y)\leq (\phi_x^{-1})'(y)\leq \Mg(y).\]
\item\label{hypo_lambda_minore} The jump rate is positive on $[i_1,\infty[$ and there exists $\mathbf{a}>0$, $b>-1$ such that 
\[ \forall y\geq i_1,\quad \lambda(y)\mg(y)\geq \mathbf{a}\frac{y^b}{b+1}.\]
Then $\forall y\geq z$, $\PP[z]{Y_1\geq y}\leq \exp(-\mathbf{a}(y^{b+1}-z^{b+1}))$ and $\lim_{y\rightarrow \infty}\PP[z]{Y_1\geq y}=0$. 
  \item\label{hypo_lambda_borne} The jump rate does not explode too soon: there exist two positive constants  $ \Lg, \llg$, such that  $\norm{\lambda}_{L^{\infty}([i_1,i'_2])}\leq \Lg$ and  $\int_0^{i_1}\lambda(u)\Mg(u)du\leq \llg$ where   
 \[
 i'_2=\max
 \left(i_2,(i_2-i_1)+
 \left(\frac{1}{\mathbf{a}(1-\kappa^{b+1})} \ln\left(\frac{2\kappa^{b+1}}{1-\kappa^{b+1}}\right)
 \right)^{1/(b+1)}\units{\kappa^{b+1}\geq 1/3}
 \right).
 \] 
\end{enumerate}
These conditions ensure that Assumptions A\ref{hypo_contraction} and A\ref{hypo_D_positif} are satisfied. The following two assumptions allow us to control the regularity of $\nul$ (the rate of convergence of the estimator $\hat{\lambda}_{n}$ depends on the regularity of $\nul$, not on the regularity of $\lambda$). 
\begin{enumerate}[resume]   
\item For any $y\in\R^+$, $\lambda(y)<\infty$. This ensures that $\nul$ and $\pl$ are continuous with respect to the Lebesgue measure on $\R^+$. 
\item \label{hypo_regularite_nu}There exists $\alpha>0$ such that: 
\begin{itemize}
        \item $\forall K\subset\R^{+*}$ compact,    $\forall z\in\R^{+*}$, the function $(\phi_.^{-1})'(.)$ belongs to $H^{\alpha}([0,z]\times K)$. 
        \item $\forall K\subset\R^{+*}$ compact, $\lambda\in \holder(K)$. \item The transition measure $Q$ can be written \[Q(x,dy)=Q_1   (x,y)dy+p_0(x)\delta_0(dy)+\sum_{i=1}^{j_Q} p_i(x)\delta_{f_i(x)}(dy)\]
        with, for any compact $K$, $Q_1$ and $(p_i)_{0\leq i\leq j_Q}$ in $\holder[\alpha-1](K)$, and  $(f_i)_{1\leq i\leq j_Q}$ invertible functions such that $(f_i^{-1})_{1\leq i\leq j_Q}\in \holder(K)$. 
    \end{itemize}
\end{enumerate}

\end{assumptiond}

 If Assumption (S) is satisfied, for fixed flow $\phi$ and transition measure $Q$, we can introduce the  class of functions
\[\Fcb=\left\{
 \lambda\in \holder(\mathcal{J}), \forall y\geq i_1, \lambda(y)\mg(y)\geq \frac{\mathbf{a}y^b}{b+1}, \;
 \int_0^{i_1}\lambda(u)\Mg(u)\leq \llg,\;
 \norm{\lambda}_{\holder(\mathcal{J})}\leq \Lg
 \right\} \]
 with $\cf=(\mathbf{a},\llg,\Lg)\in (\R^{+})^3$ and the convex set 
 \begin{equation}
\mathcal{J}=\mathcal{J}_{\lfloor \alpha\rfloor}\cup[i_1,i'_2]\label{matcalJ}:=[j_1, j_2]
\end{equation} 
 is defined by the recurrence:
 \[ \mathcal{J}_0=\mathcal{I} \quad \text{and}\quad  \mathcal{J}_{k+1}=\operatorname{Conv}\left(\mathcal{I}\cup \bigcup_{i=1}^{j_Q}   f_i^{-1}(\mathcal{J}_{k})\right).\]

The following lemmas are proved in the Appendix. 
\begin{lemma}\label{hypo} 
	Under Assumptions A\ref{hypopdmp} and (S)
	  \begin{enumerate}
  \item  \label{Vc} Assumption A\ref{hypo_contraction} is  satisfied for  $\Vb(x):=\exp\left(\mathbf{a} x^{b+1}\right)$: there exists $R$, $\gamma$,  for any function $\vert \psi\vert \leq\Vb$,   \[
 \sup_{\lambda\in\Fcb} \left\vert \EE[z_0]{\psi(Y_k,Z_k)}-\EE[\rhol]{\psi(Y_0,Z_0)}\right\vert \leq R\Vb(z_0)\gamma^k 
\] recalling that the inequality $\vert \psi\vert \leq \Vb$ means that, for any $(y,z)\in(\R^+)^2$, $\vert \psi(y,z)\vert \leq \Vb(z)$. 
  \item \label{Dc} Assumption A\ref{hypo_D_positif} is satisfied. Moreover, there exists $\eta>0$, $D_0>0$ such that
  \[\inf_{\lambda\in\Fcb}\mul([0,i_1])\geq\eta\quad \textrm{and}\quad \inf_{\lambda\in\Fcb}\inf_{y\in\mathcal{I}}\Dg(y)\geq D_0.\]
  \end{enumerate}
\end{lemma}
\begin{lemma}\label{lem_regularite_nu}
If Assumptions A\ref{hypopdmp} and (S) are satisfied,  we can control the regularity of $\nul$: 
        \[ \norm{\nul}_{\holder(\mathcal{I})}\leq \psi_Q \left( \norm{\lambda}_{\holder(\mathcal{J})}, \norm{(\phi^{-1})'}_{\holder([0,j_2]\times \mathcal{\mathcal{J}})}\right)
        \text{ with }  \mathcal{J}= [j_1,j_2] \text{ defined in (\ref{matcalJ})}.\]  
\end{lemma}

\begin{remark}
In \cite{kre2}, the author introduces the set of functions $\mathcal{F}(\mathfrak{c},b)$ with very similar conditions. As she considers a transition measure $Q$ deterministic, the sets $\mathcal{F}(\mathfrak{c},b)$ and $\Fcb\cap \holder[\alpha]$ may not be equal. In particular, if $\lambda\in\Fcb$, then there exists $\mathfrak{c}$ such that $\lambda\in\mathcal{F}(\mathfrak{c},b)\cap\holder$. On the contrary, if  $\lambda$ belongs to $\mathcal{F}(\mathfrak{c},b)\cap \holder$ and the deterministic transition $f$ is $f(x)=\kappa x$, then for $i_1$ large enough, there exists $\cf$ such that $\lambda\in\Fcb$. This is no longer the case if, for instance, $f(x)\propto x^{\beta}$. As the transition measure $Q$ is unknown, it is not possible to exploit its characteristics. 

Another difference between the two sets is that $\lambda$ is estimated on the fixed interval $[i_1,i_2]$ and the assumptions depends on $(i_1,i_2)$,  whereas in \cite{kre2}, the interval of estimation depends on the set $\mathcal{F}(\mathfrak{c},b)$. 
\end{remark}

\section{Estimation of the jump rate}\label{section_estimation_lambda}

\subsection{The observation scheme}\label{3.1}
As in \cite{azais2016} and \cite{kre2}, the statistical inference is based on the observation scheme 
$(X(t), t\leq T_n)$
and asymptotics are considered when the number of jumps  of the process, $n$, goes to infinity.
Actually the  simpler observation scheme:
$(X(0),(X(T_{i^-}), X(T_i)), 1\leq i\leq n)=(Z_0,(Y_i,Z_i), 1\leq i\leq n)$ is sufficient, as $\phi$ is known and  one can remark that for all $n\geq 1$, $T_n=\phi_{Z_{n-1}}^{-1} (Y_n )$.

\subsection{Methodology}

\cite{kre2} and \cite{azais2016} construct a pointwise kernel estimator of $\nul$ before deriving an estimator of $\lambda$. Indeed, densities are often approximated  by kernels methods (see \citet{tsybakov} for instance). If the kernel is positive, the estimator is also a density. However, we want to control the $L^2$ risk of our estimator (not the pointwise risk), and also to construct an adaptive estimator. Estimators by projection are well adapted for $L^2$ estimation: if they are longer to compute at a single point than pointwise estimators, it is sufficient to know the estimated coefficients to construct the whole function. Furthermore, to find an adaptive estimator, we minimize a function of the norm of our estimator, that is the sum of the square of the coefficients, and the dimension. That is the reason why we choose an estimation by projection.

We first aim at estimating $\nul$ on the compact set  $\mathcal{I}$. 
We construct  a sequence of $L^2$ estimators by projection on an orthonormal basis.
As usual in nonparametric estimation, their risks can be decomposed in a variance term and a bias term which depends of the regularity of the density function $\nul$. We choose to use the Besov spaces (see Section \ref{beshol}) to characterize the regularity, which are well adapted to $L^2$ estimation (particularly for the wavelet decomposition). The "best" estimator is then selected by penalization. 
To construct the sequence of estimators,  we introduce a sequence of vectorial subspaces $S_m$. We construct an estimator $\hnuml$ of $\nul$ on each subspace  and then select the best estimator $\hnuml[\hat{m}]$.

\begin{assumption}\label{hypo_sev}$\;$
\begin{enumerate}
\item \label{hypo_sev_Sm_croissants}The subspaces $S_m$ are increasing and have finite dimension $D_m$. 
\item \label{hypo_sev_connexion_normes}The $L^ 2$-norm and the $L^{\infty}$-norm are connected: 
\[\exists \psi_1>0,\forall m\in\N, \forall s\in S_m, \quad \norm{s}_{\infty}^2\leq \psi_1 D_m\norm{s}_{L^2}^ 2.\]
This implies that, for any orthonormal basis $(\varphi_l)$ of $S_m$, \[\norm{\sum\limits_{l=1}^{D_m} \varphi_l^2}_{\infty}\leq \psi_1 D_m.\] 
\item \label{hypo_sev_non_stationnaire} There exists a constant $\psi_2>0$ such that, for any $m\in\N$, there exists an orthonormal basis $\varphi_{l}$ such that: 
\[ \norm{\sum_{l=1}^{D_m} \norm{\varphi_{l}}_{\infty}\vert \varphi_{l}(x)\vert }_{\infty}\leq \psi_2 D_m.\]
\item \label{hypo_sev_regularite} There exists $\mathbf{r}\in\N$, called the regularity of the decomposition, such that: 
\[\exists C>0,\forall \alpha\leq \mathbf{r},\forall \ft\in\Besov_{2,\infty}^{\alpha},\quad  \norm{\ft-\ft_m}_{L^2}\leq C D_m^{-\alpha}\norm{\ft}_{\Besov_{2,\infty}^{\alpha}}\]
where $\ft_m$ is the orthogonal projection of $\ft$ on $S_m$ and $\Besov_{2,\infty}^{\alpha}$ is a Besov space (see Appendix \ref{beshol}).  
\end{enumerate}

\end{assumption}
Conditions \ref{hypo_sev_Sm_croissants}, \ref{hypo_sev_connexion_normes} and \ref{hypo_sev_regularite} are usual (see \citet[section 2.3]{comtegenon2007} for instance). They are satisfied for subspaces generated by wavelets, piecewise polynomials or trigonometric polynomials (see \citet{devorelorentz} for trigonometric polynomials and piecewise polynomials and  \citet{meyer} for wavelets). 
 Condition \ref{hypo_sev_non_stationnaire}  is necessary because we are not in the stationary case: it helps us to control some covariance terms. It is obviously satisfied for bounded bases (trigonometric polynomials), and localized bases (piecewise polynomials). Let us prove it for a wavelet basis. Let   $\varphi$ be a father wavelet function, then $D_m=2^m$ and $\varphi_l(x)=2^{m/2}\varphi(2^m x-l)$. We get that 
$\norm{\sum_{l=1}^{D_m}\norm{\varphi_l}_{\infty}\vert \varphi_l(x)\vert}_{\infty}\leq 
 2^m \norm{\varphi}_{\infty}\norm{\sum_{l\in\mathbb{Z}} \vert \varphi(x-l)\vert }_{\infty}$.
As $\varphi$ is at least 0-regular, for $m=2$, there exists a constant $C$ such that 
$\vert \varphi(x)\vert \leq C(1+\vert x\vert^{-2})$. Then 
$\sup_x\sum_{l\in\mathbb{Z}}\vert \varphi(x-l)\vert\leq C\sup_x\sum_{l\in\mathbb{Z}} (1+\vert x-l\vert^{-2})<\infty$ 
and condition \ref{hypo_sev_non_stationnaire} is satisfied.

\subsection{Estimation of the stationary density}

Let us now construct an estimator $\hnuml$ of $\nul$ on the vectorial subspace $S_m$.  We consider an orthonormal basis $(\varphi_l)$ of $S_m$ satisfying Assumption A\ref{hypo_sev}.
Let us set 
\[a_{l}=<\varphi_{l},\nul>=\int_{\mathcal{I}} \varphi_{l}(x)\nul(x)dx \quad  
\textrm{and} \quad 
\numl(x)=\sum_{l=1}^{D_m} a_l\varphi_l(x).\]
The function $\numl$ is the orthogonal projection of $\nul$ on $L^2(\mathcal{I})$. 
We consider the estimator
\[\hnuml(x)=\sum_{l=
1}^{D_m} \hat{a}_{l}\varphi_{l}(x)\quad \textrm{with} \quad \hat{a}_{l}=\frac{1}{n}\sum_{k=1}^n \varphi_{l}(Y
_k).\]
\begin{prop}\label{theorem_risk_L2_m_fixed}
If $D_m^2\leq n$, under  Assumptions A\ref{hypopdmp}-A\ref{hypo_contraction} and A\ref{hypo_sev}, 
\[
\EE{\norm{\hnuml-\nul}_{L^2(\mathcal{I})}^2}\leq \norm{\numl-\nul}_{L^2(\mathcal{I})}^2+(\psi_1+C_{\lambda}\psi_2)\frac{D_m}{n}+\frac{c}{n}\]where 
$\Cl=\frac{2R}{1-\gamma} \int  \Vl(z)\mul(dz)$ and $c$ depends explicitly on $\Vl$, $\gamma$, $R$.  
\end{prop}
When $m$ increases, the bias term decreases whereas the variance term increases. It is important to find a good bias-variance compromise. 
If $\nul$ belongs to the Besov space $\Besov_{2,\infty}^{\alpha}(\mathcal{I})$, then $\norm{\numl-\nul}_{L^2(\mathcal{I})}^2\leq C\norm{\nul}_{B_{2,\infty}^{\alpha}(\mathcal{I})} D_m^{-2\alpha}$ (see Assumption A\ref{hypo_sev}\ref{hypo_sev_regularite}). If $\alpha\geq 1/2$, the risk is then  minimum for $D_{m_{opt}}\propto n^ {1/(2\alpha+1)}$ and we have, for some continuous function $\psi$: 
\[\EE{\norm{\hnuml[m_{opt}]-\nul}_{L^2(\mathcal{I})}^2}\leq
\psi\left(\norm{\nul}_{B_{2,\infty}^{\alpha}(\mathcal{I})},\Vl,R,\gamma \right)n^{-2\alpha/(2\alpha+1)}.
\]
This is the usual nonparametric convergence rate (see \citet{tsybakov}). If $\alpha<1/2$, then the risk is minimum for $D_m=n^{1/2}$ and the bias term is greater than the variance term. We can remark that a piecewise continuous function belongs to $B_{2,\infty}^{1/2}$. 

Let us now construct the adaptive estimator.  We compute $(\hnuml[0],\ldots,\hnuml,\ldots)$ for $m\in\rond{M}_n=\{m,D_m^2\leq n\}$. Our aim is to select automatically $m$, without knowing the regularity of the stationary density $\nul$.
 Let us introduce the contrast function
 $\gamma_n(\ft)=\norm{\ft}_{L^2}^2-\frac{2}{n}\sum_{k=1}^n \ft(Y_k)$. If $s\in S_m$, then we can write $s=\sum_l b_l\varphi_l$ and 
 \[\gamma_n(s)=\sum_{l=1}^{D_m} b_l^2-\sum_{l=1}^{D_m} b_l\,\frac{2}{n}\sum_{k=1}^n \varphi_l(Y_k).\]
The minimum is obtained for 
$b_l=\hat{a}_{l}=\frac{1}{n}\sum_{k=1}^n\varphi_l(Y_k)$.
 Therefore
\begin{equation}\hnuml=\arg\min_{\ft\in S_m}\gamma_n(\ft). \label{eq_num_min}\end{equation}
As the subspaces $S_m$ are increasing, the function $\gamma_n(\hnuml)$ decreases when  $m$ increases. To find an adaptive estimator, we need to add a penalty term $pen(m)$. 
Let us set $pen(m)= \frac{48(\psi_1+C_{\lambda}\psi_2)D_m}{n}+\frac{48c_{\lambda}\psi_1}{n}$ (or more generally 
$pen(m)=\frac{\sigma D_m}{n}+\frac{\sigma'}{n}$, with $\sigma\geq 48(\psi_1+C_{\lambda}\psi_2)$, $\sigma'\geq 48c_{\lambda}\psi_1$) and choose 
\begin{equation}\hat{m}=\arg\min_{m\in \rond{M}_n} \gamma_n(\hnuml)+pen(m).\label{eq_def_hm} \end{equation}
We obtain an adaptive estimator $\hnuml[\hat{m}]$. 

\begin{theorem}[Risk of the adaptive estimator]\label{theorem_risk_L2_adaptive}
Under   Assumptions A\ref{hypopdmp}-A\ref{hypo_contraction} and A\ref{hypo_sev},   
$\forall \sigma\geq 48(\psi_1+C_{\lambda}\psi_2)$, $\sigma'\geq 48c_{\lambda}\psi_1$, $pen(m)=\frac{\sigma D_m}{n}+\frac{\sigma'}{n}$,
\[
\EE{\norm{\nul-\hnuml[\hat{m}]}_{L^2(\mathcal{I})}
^2} 
\leq \min_{m\in\rond{M}_n} \left(3\norm{\numl-\nul}_{L^2(\mathcal{I})}^2+4pen(m)\right) + \frac{c'}{n}.\]
where $c'$ is a function of $(\Vl,R,\gamma,\norm{\nul}_{L^2(\mathcal{I})})$. We recall that $\rond{M}_n=\{m,D_m^2\leq n\}$.  
\end{theorem}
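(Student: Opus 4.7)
The proof follows the Birgé--Massart model selection paradigm. The starting point is the identity
\[\gamma_n(t)-\gamma_n(\ft)=\norm{t-\nul}_{L^2(\mathcal{A})}^2-\norm{\ft-\nul}_{L^2(\mathcal{A})}^2-2\nu_n(t-\ft),\]
valid for every $t,\ft\in L^2(\mathcal{A})$, where $\nu_n(h):=\frac{1}{n}\sum_{k=1}^n h(Z_k)-\int h(z)\nul(z)dz$ is the centered empirical functional. The defining property of $\hat{m}$ gives $\gamma_n(\hat{\nu}_{\hat{m}})+pen(\hat{m})\le \gamma_n(\nu_m)+pen(m)$ for every $m\in\rond{M}_n$, and, combined with the identity above applied to $t=\hat{\nu}_{\hat{m}}$ and $\ft=\nu_m$, yields after rearrangement
\[\norm{\hat{\nu}_{\hat{m}}-\nul}_{L^2(\mathcal{A})}^2 \le \norm{\nu_m-\nul}_{L^2(\mathcal{A})}^2 + 2\nu_n(\hat{\nu}_{\hat{m}}-\nu_m) + pen(m)-pen(\hat{m}).\]

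I would then decouple the linear term by introducing the $L^2$-unit sphere $B_{m,m'}=\{t\in S_m+S_{m'}:\norm{t}_{L^2(\mathcal{A})}=1\}$. Applying $2ab\le a^2/4+4b^2$, the triangle inequality $\norm{\hat{\nu}_{\hat{m}}-\nu_m}^2\le 2\norm{\hat{\nu}_{\hat{m}}-\nul}^2+2\norm{\nu_m-\nul}^2$, and absorbing $\norm{\hat{\nu}_{\hat{m}}-\nul}^2$ on the left yields
\[\norm{\hat{\nu}_{\hat{m}}-\nul}_{L^2(\mathcal{A})}^2 \le 3\norm{\nu_m-\nul}_{L^2(\mathcal{A})}^2 + 8\sup_{t\in B_{m,\hat{m}}}\nu_n(t)^2 + 2pen(m)-2pen(\hat{m}).\]
The next step is to introduce a shape function $p(m,m')=\kappa(D_m+D_{m'})/n$ with $\kappa\le\sigma/4$ calibrated so that $8p(m,m')\le 2pen(m)+2pen(m')$, split $8\sup\nu_n(t)^2$ as $8(\sup\nu_n(t)^2-p(m,\hat{m}))_++8p(m,\hat{m})$, and absorb the second piece into the penalty so that precisely $4pen(m)$ remains, matching the oracle inequality. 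The residual tail $\sum_{m'\in\rond{M}_n}\EE{(\sup_{t\in B_{m,m'}}\nu_n(t)^2-p(m,m'))_+}$ must then be shown to be $O(1/n)$.

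Bounding this residual is the main obstacle. For i.i.d.\ data it would be a direct consequence of Talagrand's concentration inequality applied to the parametric class $B_{m,m'}$, but here the $Z_k$ are only exponentially $\beta$-mixing (Corollary \ref{cor_betageometricmixing}) and non-stationary. My plan is to split the sample into blocks of length of order $\ln n$ via Berbee's coupling lemma to produce variables that are independent up to an exponentially small event, and then to invoke the Talagrand-type inequality for exponentially $\beta$-mixing sequences that the authors prove in the Appendix. The input parameters of that inequality---the uniform $L^\infty$ bound $\sup_{t\in B_{m,m'}}\norm{t}_\infty\le\sqrt{\psi_1(D_m+D_{m'})}$, the uniform variance, and the expectation of the empirical supremum---are furnished by Assumption A\ref{hypo_sev}\ref{hypo_sev_connexion_normes}--\ref{hypo_sev_non_stationnaire} together with Corollary \ref{cor_majoration_variance}, the latter producing precisely the constant $C_{\mathfrak{c},b}$ that enters the calibration of the penalty. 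Summing the exponentially small tails over $m'\in\rond{M}_n$, taking expectations and minimizing over $m\in\rond{M}_n$ yields the announced oracle inequality, with the residual $c/n$ gathering these summed tails and the non-stationarity correction appearing in Corollary \ref{cor_majoration_variance}.
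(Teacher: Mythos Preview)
Your proposal is correct and follows essentially the same route as the paper's proof. The one point worth making explicit is that the paper decomposes $\nu_n=I_n+J_n$, where $I_n(s)=\frac{1}{n}\sum_k\bigl(s(Z_k)-\EE{s(Z_k)}\bigr)$ and $J_n(s)=\frac{1}{n}\sum_k\bigl(\EE{s(Z_k)}-\int s\,\nul\bigr)$: the Talagrand-type lemma in the Appendix is stated for $I_n$ (centered at the true non-stationary means), while $\sup_{t\in B_{m,\hat m}}J_n(t)^2\le C/n$ is handled directly via Corollary~\ref{cor_majoration_variance} and Assumption~A\ref{hypo_sev}\ref{hypo_sev_connexion_normes}---this is precisely the ``non-stationarity correction'' you allude to at the end.
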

 The estimator is adaptive: it realizes the best bias-variance compromise, up to a multiplicative constant. We have an explicit rate of convergence if $\nul$ belongs to some (unknown) Besov space $\Besov_{2,\infty}^{\alpha}$: in that case, \[\norm{\nul-\numl}_{L^2(\mathcal{I})}^2\leq 3\norm{\nu_{m_{\text{opt}}}-\nul}_{L^2(\mathcal{I})}^2+4pen(m_{\text{opt}})+\frac{c}{n}\leq  C\norm{\nul}_{\Besov_{2,\infty}^{\alpha}}D_m^{-2\alpha}\] and if $\alpha\geq 1/2$, 
\begin{equation}\EE{\norm{\nul-\hnuml[\hat{m}]}_{L^2(\mathcal{I})}
^2}\leq\psi\left(\norm{\nul}_{\Besov_{2,\infty}^{\alpha}(\mathcal{I})},\Vl,R,\gamma\right)  n^{-2\alpha/(2\alpha+1)}\label{eq:risque_est_adaptatif_fcb}
\end{equation}
for some continuous function $\psi$.

\subsection{Estimation of the jump rate}\label{subsection_estimation_lambda}
By \eqref{formulel}, we have
\[\lambda(y)=\frac{\nul(y)}{\Dg(y)} \quad \text{recalling that}\quad 
\Dg(y)=\E_{\xil}\left((\phi^{-1}_{Z_0})'(y)\nbOne_{\{Z_0\leq y\leq Y_1\}}\right)\]
where $\xil$ is the stationary measure of $(Z_k,Y_{k+1})$. 

\begin{remark}
We notice that this formula is different as the one used in \cite{kre2} \[\lambda(y)= \frac{f(\nul(y))}{\tilde{\Dg}(y)}\] where 
\[\tilde{\Dg}(y):=\EE[ \nul]{((f\circ \: \phi_{Z_0})^{-1})'(f(y))\units{f(Z_0)\leq f(y)}\units{Z_1\geq f(y)}}.\]
As in \cite{kre2}, the author works under the assumption that $Q(x,\{y\})=\nbOne_{\{y=f(x)\}}$, the study was easier, here we need to consider the Markov chain $(Y_k ,Z_k)_{k\in\mathbb{N}}$.
\end{remark}
To estimate the jump rate, we construct a quotient estimator.
Let us consider the estimator 
\begin{equation}\hat{\lambda}_n(y)=\frac{\hnuml[\hat{m}](y)}
{\Dge(y)}\nbOne_{\{\hnuml[\hat{m}](y)\geq 0\}}\nbOne_{\{\Dge(y)\geq \ln(n)^{-1}\}}
\label{def_hat_lambda}
\end{equation}
where 
\[\Dge(y):=\frac{1}{n}\sum_{k=1}^n(\phi^{-1}_{Z_{k-1}})'(y)\nbOne_{\{Z_{k-1}\leq y\leq Y_k\}}.\]

\begin{remark}
As the process $\{X(t)\}$ is observed continuously without errors, $\phi^{-1}$ (and therefore $(\phi^{-1})'$) is known on $\cup_k [Z_{k-1},Y_k]$ so $\hat{\Dg}_n(y)$ is computable. 
 \end{remark}
The estimator  $\hat{\lambda}_n$  converges with nearly the same rate of convergence as $\hat{\nul}$:

\begin{theorem}\label{lem_estimation_lambda}
Under A\ref{hypopdmp}-A\ref{hypo_sev},
as soon as $\ln(n)^{-1}\leq D_0/2$, 
\begin{align*}\EE{\norm{\hat{\lambda}_n-\lambda}_{L^2(\mathcal{I})}^2}
&\leq 3\ln^2(n)\EE{\norm{\hnuml[\hat{m}]-\nul}_{L^2(\mathcal{I})}^2}+\cl'\frac{\ln^2(n)}{n}\\
&\leq 3\ln^2(n)\min_{m\in\rond{M}_n}\left\{ 3\norm{\hnuml-\nul}_{L^2(\mathcal{I})}^2+4pen(m)\right\}+\cl'\frac{\ln^2(n)}{n}
\end{align*}
where \[\cl'=\Phi_1^2\frac{2+\Cl}{D_0^2}\left(3\norm{\lambda}_{L^2(\mathcal{I})}^2+12\norm{\nul}_{L^2(\mathcal{I})}^2\right),\quad \Phi_1=\sup_{x\in[0,i_2],y\in\mathcal{I}} (\phi_x^{-1})'(y).\] 
\end{theorem}

The bias term depends of the regularity of the stationary density $\nul$, not of the regularity of $\lambda$. If we consider $\lambda$ and $\nul$ as functions of a Besov space, their regularities are not related: the Besov spaces are not stable by product (as they are subspaces of $L^2(\mathcal{I})$). We would like to link the rate of convergence of $\hat{\lambda}_n$ to the regularity of $\lambda$ rather than $\nul$, at least when $\lambda\in\Fcb$. In that case, $\lambda$ belong to some H\"older space, which is stable by product, composition and integration. See Appendix \ref{beshol} for the definition and properties of Besov and Hölder spaces. We obtain the following corollary: 
 \begin{corollary}\label{cor_bound_risk_lambda_fcb} Under A\ref{hypopdmp}, (S) and A\ref{hypo_sev},
as soon as $\ln(n)^{-1}\leq D_0/2$, for any $\alpha\geq 1/2$,
\[\sup_{\lambda\in\Fcb} \EE{\norm{\hat{\lambda}_n-\lambda}_{L^2(\mathcal{I})}^2}\lesssim  \ln^2(n)n^{-2\alpha/(2\alpha+1)}.\]
\end{corollary}

\begin{remark}
 \cite{kre2} obtain the same rate of convergence for a kernel estimator (with the regularity of $\lambda$ known). 
\end{remark}

\subsection{Minimax bound for the estimator of the jump rate}

We have proved that, under assumptions A\ref{hypopdmp}, (S) and A\ref{hypo_sev}, 
\[\sup_{\lambda\in\Fcb}\EE{\norm{\hat{\lambda}_n-\lambda}
_{L^2(\mathcal{I})}^2}\lesssim \ln^2(n)n^{-2\alpha/(2\alpha+1)}.\]
We would like to verify that our estimator converges with the minimax rate of convergence, i.e: 
\[ \inf_{\hat{\lambda}_n}\sup_{\lambda\in  \Fcb} \EE{\norm{\hat{\lambda}_n-\lambda}_{L^2(\mathcal{I})}^2}\geq C \ln^2(n) n^{-2\alpha/(2\alpha+1)}.\]
The $\ln^2(n)$ factor comes from the quotient estimator, we can not expect it will stay in the minimax bound. Indeed, it is clear that one could replace $\ln^{-1}(n)$ in \eqref{def_hat_lambda} by any function $w(n)$ greater than $D_0/2$. The best estimator will be obtained of course by taking $w(n)=D_0/2$ and the risk of this estimator (unreachable as $D_0$ is unknown) will be proportional to $n^{-2\alpha/(2\alpha+1)}$.

\begin{theorem}[Minimax bound] \label{lowerparametric}  If  A\ref{hypopdmp}, (S) and A\ref{hypo_sev} are satisfied,  then 
\[ \inf_{\hat{\lambda}_n}\sup_{\lambda\in  \Fcb} \EE{\norm{\hat{\lambda}_n-\lambda}_{L^2(\mathcal{I})}^2}\geq C  n^{-2\alpha/(2\alpha+1)}\]
where  the  infimum  is  taken  among  all
estimators.
\end{theorem}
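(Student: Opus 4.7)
The plan is to apply the standard reduction-to-testing machinery of Tsybakov (Theorem~2.5 in his monograph), by exhibiting a finite family of perturbations of a fixed reference rate $\lambda_0 \in \Ecb \cap H^\alpha(\mathcal{I},M_1/2)$ that are well separated in $L^2(\mathcal{I})$ but whose induced laws on the observations $(Z_0,\ldots,Z_n)$ are close in Kullback--Leibler divergence.

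First I would build the hypothesis class. Fix any smooth $\lambda_0 \in \Ecb \cap H^\alpha(\mathcal{I},M_1/2)$ with $\lambda_0 \geq 2\varepsilon$ on $\mathcal{I}$. Take a $C^\infty$ bump $\varphi$ supported in $(0,1)$ with $\int \varphi^2 > 0$, set $h = c_0 n^{-1/(2\alpha+1)}$ with $c_0$ small, and let $M = \lfloor |\mathcal{I}|/h\rfloor$. On disjoint subintervals $[x_k,x_k+h]\subset \mathcal{I}$ define the atoms $\varphi_k(y) = L h^\alpha \varphi((y-x_k)/h)$, and for $\omega \in \{0,1\}^M$ set
\[
\lambda_\omega(y) = \lambda_0(y) + \sum_{k=1}^M \omega_k \varphi_k(y).
\]
A standard calculation shows $\|\varphi_k\|_{H^\alpha(\mathcal{I})}\leq L\|\varphi\|_{H^\alpha}$ uniformly in $h$, so for $L$ small enough $\|\lambda_\omega\|_{H^\alpha}\leq M_1$. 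Since $\|\sum_k \omega_k\varphi_k\|_\infty \leq L h^\alpha\|\varphi\|_\infty \to 0$, the lower bound $\lambda_\omega \geq \varepsilon$ on $[i_1,r]$ holds for $n$ large; the tail condition of A\ref{hypo_fcb} is inherited from $\lambda_0$ since the perturbations are supported in the compact set $\mathcal{I}$. Hence $\lambda_\omega \in \Ecb \cap H^\alpha(\mathcal{I},M_1)$. By the Varshamov--Gilbert lemma I then extract $\omega^{(0)},\ldots,\omega^{(T)}$ with $T \geq 2^{M/8}$ and pairwise Hamming distance at least $M/8$, giving the separation
\[
\norm{\lambda_{\omega^{(i)}}-\lambda_{\omega^{(j)}}}_{L^2(\mathcal{I})}^2 \geq \tfrac{M}{8}\, L^2 h^{2\alpha+1}\|\varphi\|_{L^2}^2 \geq c_1 \, h^{2\alpha} = c_1 c_0^{2\alpha} n^{-2\alpha/(2\alpha+1)}.
\]

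Next I bound the KL divergence. By the Markov property applied to the chain $(Z_k)$ with transition density \eqref{densitet1},
\[
\mathrm{KL}(P^n_{\lambda_0}\,\|\,P^n_{\lambda_\omega}) = \sum_{k=1}^n \EE[\lambda_0]{\mathrm{KL}\bigl(\pl[0](Z_{k-1},\cdot)\,\big\|\,\pl[\omega](Z_{k-1},\cdot)\bigr)}.
\]
Because $\lambda_0,\lambda_\omega\geq \varepsilon$ on $\mathcal{I}$, $g_x\leq \Mg$, and the perturbation $\lambda_\omega-\lambda_0$ is supported in $\mathcal{I}$, a second-order Taylor expansion of the log-density (using $(\log u - \log v)^2 \leq \varepsilon^{-2}(u-v)^2$ and handling the exponential survival term similarly) yields
\[
\mathrm{KL}\bigl(\pl[0](x,\cdot)\,\|\,\pl[\omega](x,\cdot)\bigr) \leq C_{\cf}\int_{\mathcal{I}}(\lambda_0(y)-\lambda_\omega(y))^2 \, dy
\]
uniformly in $x$. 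Summing and using disjointness of the bumps,
\[
\mathrm{KL}(P^n_{\lambda_0}\,\|\,P^n_{\lambda_\omega}) \leq C_{\cf}\, n\, M L^2 h^{2\alpha+1}\|\varphi\|_{L^2}^2 \leq C_{\cf}' L^2 n h^{2\alpha} = C_{\cf}' L^2 c_0^{2\alpha} n^{1/(2\alpha+1)}.
\]
Since $\log T \geq (M/8)\log 2 \asymp h^{-1} = c_0^{-1} n^{1/(2\alpha+1)}$, choosing $L$ (or equivalently $c_0$) sufficiently small ensures the averaged divergence is at most $\tfrac{1}{16}\log T$. Tsybakov's multiple-hypothesis theorem then gives a universal positive lower bound on $\inf_{\hat\lambda}\max_j \PP{\norm{\hat\lambda-\lambda_{\omega^{(j)}}}_{L^2(\mathcal{I})}\geq s}$ with $s \asymp n^{-\alpha/(2\alpha+1)}$, and Markov's inequality converts it into the announced $L^2$-risk lower bound.

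The main obstacle is the KL estimate: the transition density depends on $\lambda$ both through the explicit factor $\lambda(f^{-1}(y))$ and through the integrated hazard in the exponential, so the Taylor expansion must be done carefully so that the resulting quadratic term is genuinely controlled by $\|\lambda_0-\lambda_\omega\|_{L^2(\mathcal{I})}^2$ with a constant that is uniform in the starting point $x$ and in $\omega$. This is made possible by the uniform positivity of $\lambda_0$ from A\ref{hypo_lambda_positif}, the uniform boundedness of $g_x$ from \eqref{contrainte_g_bounded}, and the fact that the perturbations live inside the compact interval $\mathcal{I}$ on which $\lambda_0$ is bounded above and below.
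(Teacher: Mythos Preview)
Your proposal follows the same architecture as the paper: Tsybakov's Theorem~2.5, a reference rate $\lambda_0$, bump perturbations of amplitude $\asymp h^\alpha$ on disjoint subintervals of $\mathcal I$, Varshamov--Gilbert extraction, and an information bound of order $n^{1/(2\alpha+1)}\asymp\log T$. The one genuine difference is the divergence used in the last step: the paper bounds the $\chi^2$ divergence through a multiplicative recursion on the Markov factorisation, showing $\chi^2(\mathbf P_{\lambda_j},\mathbf P_{\lambda_0})+1\le(1+O(h_n^{2\alpha}))^n$, whereas you exploit the additive KL chain rule for Markov chains and bound each one-step KL by $C\|\lambda_0-\lambda_\omega\|_{L^2(\mathcal I)}^2$. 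Both fit Tsybakov's hypotheses; your route is slightly cleaner because it avoids the product-to-sum approximation, while the paper's $\chi^2$ computation is more explicit about how the two occurrences of $\lambda$ in the transition density \eqref{densitet1} combine.

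One small correction: Tsybakov's Theorem~2.5 asks for $\tfrac{1}{T}\sum_j\mathrm{KL}(P_{\lambda_j}\Vert P_{\lambda_0})$, whereas you wrote the chain rule for $\mathrm{KL}(P_{\lambda_0}\Vert P_{\lambda_\omega})$. Since $\lambda_0$ and $\lambda_\omega$ share the same uniform upper and lower bounds on $\mathcal I$ and coincide outside, your one-step estimate holds verbatim in the other direction; just take the expectation under $P_{\lambda_\omega}$ in the chain rule. Also, condition A\ref{hypo_fcb}\ref{majoration_int} is not literally ``inherited'' (the integral does change), so you need $\lambda_0$ to satisfy it with some slack, exactly as the paper does.
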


\section{Proofs} \label{section_proofs}

Lemmas \ref{cor_majoration_variance}, \ref{cor_betageometricmixing}, \ref{hypo} and \ref{lem_regularite_nu} are proved in the Appendix. 
\sloppy

\subsection{Proof of Proposition \ref{theorem_risk_L2_m_fixed}}

We have the following bias-variance decomposition:
\begin{align*}
\EE{\norm{\nul-\hnuml}_{L^2(\mathcal{I})}^ 2}&=\int_{\mathcal{I}} \EE{\left(\nul(x)-\hnuml(x) \right)^2}dx\\
 &= \int_{\mathcal{I}} \left(\nul(x)-\EE{\hnuml(x)}\right)^2dx +\int_{\mathcal{I}}\Var{\hnuml(x)}dx\\
&=\norm{\EE{\hnuml}-\nul}_{L^2(\mathcal{I})}^2+
\int_{\mathcal{I}}\Var{\hnuml(x)}dx.
\end{align*}
The estimator $\hnuml$ (and therefore its expectation $\EE{\hnuml}$) belongs to the subspace $S_m$. Then, by orthogonality
\[\EE{\norm{\nul-\hnuml}_{L^2(\mathcal{I})}^ 2}=\norm{\nul-\numl}_{L^2(\mathcal{I})}^ 2+ \norm{\EE{\hnuml}
-\numl}_{L^2(\mathcal{I})}^2+\int_{\mathcal{I}} \Var{\hnuml(x)}dx.\]
The first terms are two terms of bias, the third  is a variance term. Let us first bound the second term of bias. As the functions $(\varphi_l)_{1\leq l\leq D_m}$ form an orthonormal basis of $S_m$, we have
\begin{align*}
\norm{\EE{\hnuml}-\numl}_{L^2(\mathcal{I})}^2&=\sum_{l=1}^{D_m} \left(\EE{\hat{a}_{l}}-a_{l}\right)^2\\
&=\sum_{l=1}^{D_m} \left( \frac{1}{n}\sum_{k=1}^n\EE{\varphi_{l}(Y_{k})}-\int_{\mathcal{I}} \varphi_{l}(x)\nul(x)dx\right)^2.
\end{align*}
 By Lemma \ref{cor_majoration_variance}, 
 \[\left\vert \frac{1}{n}\sum_{k=1}^n\EE{\varphi_{l}(Y_{k})}-\int_{\mathcal{I}} \varphi_{l}(x)\nul(x)dx\right\vert \leq \norm{\varphi_l}_{\infty} \frac{R\Vl(z_0)}{n(1-\gamma)}.\]
 As the $L^2$ and the $L^{\infty}$-norms are connected (see Assumption
 A\ref{hypo_sev}\ref{hypo_sev_connexion_normes}), $\norm{\varphi_l}_{\infty}^2\leq \psi_1D_m$ and, since $D_m^2\leq n$, we get:
\[
\norm{\EE{\hnuml-\numl}}_{L^2(\mathcal{I})}^2\leq \sum_{l=1}^{D_m}\frac{\norm{\varphi_{l}}_{\infty}^2}{n^2}\frac{R^2\Vl^2(z_0)}{(1-\gamma)^2}\leq \psi_1\frac{D_m^2}{n^2}\frac{R^2\Vl^2(z_0)}{(1-\gamma)^2}
\leq \psi_1\frac{1}{n}\frac{R^2\Vl^2(z_0)}{(1-\gamma)^2}.
\]
Let us now consider the variance term. As the functions $(\varphi_l)$ form an orthonormal basis of $S_m$, the integrated variance of $\hnuml$ is the sum of the variances of the coefficients $\hat{a}_{\lambda}$:
\begin{align*}
\int_{\mathcal{I}} \Var{\hnuml(x)}dx&=\int_{\mathcal{I}} \Var{\sum_{l=1}^{D_m}\hat{a}_l\varphi_l(x)}dx=\sum_{k,l} \Cov[z_0]{\hat{a}_k,\hat{a}_l}<\varphi_l,\varphi_k>_{L^2(\mathcal{I})}
\\
&=\sum_{l=1}^{D_m} \Var{\hat{a}_l}.
\end{align*}
By Lemma \ref{cor_majoration_variance}, as $\int_{\R^+} \rho(x,dz)=\nul(x)$, we get: 
\begin{align*}
\Var{\hat{a}_l}&=\Var{\frac{1}{n}\sum_{k=1}^n\varphi_l(Y_k)} \nonumber \\
&\leq \frac{1}{n} 
\int_{\mathcal{I}} \varphi_l^2(x)\nul(x)dx 
+\frac{\norm{\varphi_l}_{\infty}}{n}\int_{\mathcal{I}\times \R^+}  \vert \varphi_l(x)\vert \Gl(z)\rhol(dx,dz) +\frac{\cl\norm{\varphi_l}_{\infty}^2}{n^2}.
\end{align*}
By Assumptions A\ref{hypo_sev}\ref{hypo_sev_connexion_normes} and \ref{hypo_sev_non_stationnaire}, $\forall x$, $\sum_{l=1}^{D_m}\varphi_{l}^2(x)\leq \psi_1 D_m$, $\sum_{l=1}^{D_m}\norm{\varphi_l}_{\infty}\vert \varphi_l(x)\vert \leq \psi_2D_m$ and $\sum_{l=1}^{D_m}\norm{\varphi_l}^2_{\infty} \leq \psi_1 D_m^2\leq \psi_1n$. Therefore: 
\begin{equation} \int_{\mathcal{I}} \Var{\hnuml(x)}dx= \sum_{l=1}^{D_m} \Var{\hat{a}_l}
\leq (\psi_1+\Cl\psi_2)\frac{D_m}{n} +\frac{\cl}{n}\psi_1\label{majoration_variance_somme_al}
\end{equation}
where 
$\Cl=\int \Gl(z)\mul(dz)=\frac{2R}{1-\gamma}\int_{\mathcal{I}}  \Vl(z)\mul(dz)$ and $\cl$ depends only on $\Vl$, $R$ and  $\gamma$.

\subsection{Proof of Theorem \ref{theorem_risk_L2_adaptive}}

The number of coefficients in the adaptive estimator is random. If we are still able to control easily the bias term, we can not simply control the variance of our estimator by adding the variances of its coefficients. 
For any $m\in\rond{M}_n$, by definition of $\hat{m}$ (see \eqref{eq_num_min} and \eqref{eq_def_hm}), we have the following inequality:
\[
\gamma_n(\hnuml[\hat{m}])\leq \gamma_n(\hnuml)+pen(m)-pen(\hat{m})\leq \gamma_n(\numl)+pen(m)-pen(\hat{m})\\
,\]
with $\gamma_n(\ft)=\norm{\ft}_{L^2(\mathcal{I})}^2-2n^{-1}\sum_{k=1}^n \ft(Y_k)$. Then
\begin{equation}
\norm{\hnuml[\hat{m}]}_{L^2(\mathcal{I})}^ 2\leq\norm{\numl}_{L^2(\mathcal{I})}^ 2+pen(m)-pen(\hat{m})+\frac{2}{n}\sum_{k=1}^ n \left(\hnuml[\hat{m}](Y_k)-\numl(Y_k)\right).\label{eq_majoration_risque_ad_1}
\end{equation}
We have that, for any function $\ft\in L^2(\mathcal{I})$,  
$
\norm{\ft}_{L^2(\mathcal{I})}^2=\norm{\ft-\nul}_{L^2(\mathcal{I})}^2-\norm{\nul}_{L^2(\mathcal{I})}^2+2\int_{\mathcal{I}}\ft(x)\nul(x)dx$.
We apply this equality to $\hnuml[\hat{m}]$ and $\numl$. Equation \eqref{eq_majoration_risque_ad_1} becomes:
\begin{align*}
\norm{\hnuml[\hat{m}]-\nul}_{L^2(\mathcal{I})}^ 2 &\leq\norm{\numl-\nul}_{L^2(\mathcal{I})}^ 2+pen(m)-pen(\hat{m})\\ 
&+\frac{2}{n}\sum_{k=1}^n \hnuml[\hat{m}](Y_k)-\numl(Y_k)
- 2\int_{\mathcal{I}} (\hnuml[\hat{m}](x)-\numl(x))\nul(x)dx.
\end{align*}

The function $\hnuml[\hat{m}]-\numl$ belongs to the vectorial subspace $S_{\hat{m}}+S_m$. Therefore: 
\begin{align*}
\norm{\hnuml[\hat{m}]-\nul}_{L^2(\mathcal{I})}^ 2 &\leq \norm{\numl-\nul}_{L^2(\mathcal{I})}^ 2+pen(m)-pen(\hat{m})\\ 
&+ 2\norm{\hnuml[\hat{m}]-\numl}_{L^2(\mathcal{I})}\sup_{\ft\in\rond{B}_{m,\hat{m}}} \left\vert \sum_{k=1}^n \frac{1}{n} \ft(Y_k)-\int_{\mathcal{I}} \ft(x)\nul(x)dx\right\vert 
\end{align*}
where $\rond{B}_{m,m'}=\{\ft\in S_m+S_{m'},\norm{\ft}_{L^2(\mathcal{I})}=1\}$. As the sequence $(S_m)$ is increasing, $S_m+S_{m'}$ is simply the largest of the two subspaces. 
By the inequality of arithmetic and geometric means, 
\begin{align*}
\norm{\hnuml[\hat{m}]-\nul}_{L^2(\mathcal{I})}^ 2 &\leq \norm{\numl-\nul}_{L^2(\mathcal{I})}^ 2+pen(m)-pen(\hat{m})+ \frac{1}{4}\norm{\hnuml[\hat{m}]-\numl}_{L^2(\mathcal{I})}^2\\
&+\sup_{\ft\in\rond{B}_{m,\hat{m}}} 4 \left(\frac{1}{n}\sum_{k=1}^n \ft(Y_k)-\int_{\mathcal{I}} \ft(x)\nul(x)dx\right)^2.
\end{align*}
By the triangular inequality, $\norm{\hnuml[\hat{m}]-\numl}_{L^2(\mathcal{I})}^2\leq 2\norm{\hnuml[\hat{m}]-\nul}_{L^2(\mathcal{I})}^2+
2\norm{\numl-\nul}_{L^2(\mathcal{I})}^2$, and: 
\begin{align*}
\norm{\hnuml[\hat{m}]-\nul}_{L^2(\mathcal{I})}^ 2 &\leq 3\norm{\numl-\nul}_{L^2(\mathcal{I})}^ 2+2pen(m)-2pen(\hat{m})\\
&+ 8\sup_{\ft\in\rond{B}_{m,\hat{m}}} \left(\frac{1}{n}\sum_{k=1}^n \ft(Y_k)-\int_{\mathcal{I}}\ft(x)\nul(x)dx\right)^2.
\end{align*}
We can decompose the last term in a bias term and a variance term. Let us set: 
\begin{equation} I_n(\ft):=\frac{1}{n}\sum_{k=1}^n \ft(Y_k)-\EE{\ft(Y_k)} ,\; J_n(\ft):=\frac{1}{n}\sum_{k=1}^n\left(\EE{\ft(Y_k)}-\int_{\mathcal{I}} \ft(x)\nul(x)dx\right)
\label{def_I_J}
\end{equation}
and $p(m,m'):=(pen(m)+pen(m'))/8$.  Then: 
\begin{align}
\EE{\norm{\hnuml[\hat{m}]-\nul}_{L^2(\mathcal{I})}^ 2}&\leq 3 \norm{\numl-\nul}_{L^2(\mathcal{I})}^ 2+4pen(m)\nonumber\\&+16 \EE{\sup_{\ft\in\rond{B}_{m,\hat{m}}} I_n^2(\ft) +J_n^2(\ft)}
-16p(m,\hat{m}).
\label{risk_adaptif_decompo}
\end{align}
By Assumption A\ref{hypo_sev}\ref{hypo_sev_connexion_normes}, $\ft\in\rond{B}_{m,\hat{m}}$ implies that $\norm{\ft}_{\infty}^2\leq \psi_1(D_m+D_{\hat{m}})\leq 2\psi_1 n^{1/2}$ (we recall that $D_m$ and $D_{\hat{m}}$ are smaller than $n^{1/2}$). Then by   Lemma  \ref{cor_majoration_variance}, 
\begin{equation}
\sup_{\ft\in\rond{B}_{m,\hat{m}}}J_n^2(\ft) \leq  \sup_{\ft\in\rond{B}_{m,\hat{m}}}\frac{R^2\Vl^2(z_0)\norm{s}_{\infty}^2}{n^2(1-\gamma)^2} \leq \frac{4\psi_1^2 R^2\Vl^2(z_0)}{n(1-\gamma)^2}.\label{majoration_J}
\end{equation}
It remains to bound $\EE{\sup_{\ft\in\rond{B}_{m,\hat{m}}}I_n^2(\ft)-p(m,\hat{m})}_+$.
The unit ball $\rond{B}_{m,\hat{m}}$ is random. We can not bound $I_n^2(\ft)$ on it, we have to control the risk on the fixed balls $\rond{B}_{m,m'}$. We can write:
\begin{equation}
\EE{\sup_{\ft\in\rond{B}_{m,\hat{m}}}I_n^2(\ft)-p(m,\hat{m})}_+\leq \sum_{m,m'\in\rond{M}_n}\EE{\sup_{\ft\in\rond{B}_{m,m'}}I_n^2(\ft)-p(m,m')}_+.\label{majoration_I_somme}
\end{equation}
The Markov chain $(Y_1,\ldots,Y_n)$ is exponentially $\beta$-mixing with $\beta$-mixing coefficient $\beta_Y(k)\leq c\gamma^k=ce^{-\ln(1/\gamma)k}$.
The following lemma is deduced from the Berbee's coupling lemma and a Talagrand inequality. It is proved in the appendix. 

\begin{lemma}[Talagrand's inequality for $\beta$-mixing variables]\label{lem_talagrand_mixing}
Let  $Y_1,\dots,Y_n$ be a Markov chain exponentially $\beta$-mixing, with $\beta$-mixing coefficient $\beta_Y(k)\leq ce^{-b_0k}$. 
We choose $q_n:=c_q\ln(n)$ with $c_q\geq 2/b_0$, $p_n=n/(2q_n)$. We have that $\beta_Y(q_n)\leq c\gamma^{2 \ln(n)}\lesssim n^{-2}$. 
Let us consider
\[
I_n(\ft)=\frac{1}{n}\sum_{k=1}^ n \ft(Y_k)-\EE{\ft(Y_k)}.\]
If  we can find a triplet ($M_2$, $V$ and $H$) such that:
\begin{gather*}
\forall i, \sup_{\ft\in\rond{B}_{m,m'}}\Var{\frac{1}{q_n}\sum_{k=i}^{q_n+i} \ft(Y_k)}\leq \frac{V}{q_n},\\
 \sup_{\ft\in\rond{B}_{m,m'}}\norm{\ft}_{\infty}\leq M_2\;\textrm{ and }\;  \EE{\sup_{\ft\in\rond{B}_{m,m'}}\vert I_n(\ft)\vert }\leq \frac{H}{\sqrt{n}},
\end{gather*}
then we have: 
\begin{align*}
\EE{\sup_{\ft\in\rond{B}_{m,m'}}\vert I_n^2(\ft)-6H^2\vert }_+&
\leq K_1\frac{V}{n} \exp\left(-k_1\frac{H^2}{V}\right)+K_2\frac{M_2^2}{p_n^2}\exp\left(-k_2 \frac{\sqrt{p_n}H}{\sqrt{q_n}M_2}\right)\\
&+2\frac{M_2^2}{n^2}
\end{align*}
where $K_1$, $K_2$, $k_1$ and $k_2$ are universal constants. 
\end{lemma}
For the sake of simplicity, let us set  $D=D_m+D_{m'}$ and $\rond{B}=\rond{B}_{m,m'}$. By Assumption A\ref{hypo_sev}\ref{hypo_sev_connexion_normes},
\[\sup_{s\in\rond{B}} \norm{\ft}_{\infty} \leq \sup_{s\in\rond{B}} \psi_1^{1/2}D^{1/2}\norm{\ft}_{L^2(\mathcal{I})}= \psi_1^{1/2}D^{1/2} :=M_2.\]  By Lemma \ref{cor_majoration_variance}, 
\[\Var{\frac{1}{q_n}\sum_{k=1}^{q_n} s(Y_k)}\leq \frac{1}{q_n}\int_{\mathcal{I}} s^2(z)\nul(z)dz+\frac{\norm{s}_{\infty}}{q_n}\int_{\mathcal{I}} \vert s(z)\vert \nul(z)\Gl(z)dz+\frac{\cl\norm{s}_{\infty}^2}{q_n^2}.\]
By Cauchy-Schwarz, 
\[\norm{s^2\nul}_{L^1(\mathcal{I})}\leq \norm{s}_{L^2(\mathcal{I})}\norm{s\nul}_{L^2(\mathcal{I})}\leq \norm{s}_{L^2(\mathcal{I})}\norm{s}_{\infty}\norm{\nul}_{L^2(\mathcal{I})}\]
and 
\[\norm{s\nul \Gl}_{L^1(\mathcal{I})}\leq \norm{\Gl}_{L^{\infty}(\mathcal{I})} \norm{s}_{L^2(\mathcal{I})}\norm{\nul}_{L^2(\mathcal{I})}.\]
Then 
\[ \Var[z_0]{\frac{1}{q_n}\sum_{k=1}^{q_n} s(Z_k)}\leq \frac{\norm{s}_{L^2(\mathcal{I})}\norm{\nul}_{L^2(\mathcal{I})}}{q_n}
\left(\norm{s}_{\infty}+\norm{G_{\lambda}}_{L^{\infty}(\mathcal{I})}\right)
+c_{\lambda} \frac{\norm{s}_{\infty}^2}{q_n^2}.\]By Assumption A\ref{hypo_sev}\ref{hypo_sev_connexion_normes}, $\norm{s}_{\infty}\leq \psi_1^{1/2}D^{1/2}$, moreover, $\sup_{s\in\mathcal{B}} \norm{s}_{L^2(\mathcal{I})}=1$ and then
\begin{align*}
 \sup_{\ft\in\rond{B}} \Var{\frac{1}{q_n}\sum_{k=1}^{q_n}\ft(Z_k) }
&\leq \frac{\psi_1^{1/2}D^{1/2}\norm{\nul}_{L^2(\mathcal{I})}\left(1+\norm{\Gl}_{L^{\infty}(\mathcal{I})}\right)}{q_n}+\frac{\cl\psi_1 D}{q_n^2}\\
&\leq \frac{c_1D^{1/2}}{q_n}+c_2\frac{D}{q_n^2}:=\frac{V}{q_n}.
\end{align*}

It remains to find $H$ such that $\EE{\sup_{\ft\in\rond{B}}\vert I_n(\ft)\vert }\leq H/\sqrt{n}$. Let us introduce $(\varphi_l)_{1\leq l\leq D}$ an orthonormal basis of $S_m+S_{m'}=S_{\max(m,m')}$ satisfying Assumption A\ref{hypo_sev}. Then we can write 
$s=\sum_l b_l\varphi_l.$ As the function $s\rightarrow I_n(s)$ is linear: 
\[
\sup_{\ft\in\rond{B}}I_n^2(\ft)=\sup_{\sum b_l^2=1} \left(\sum_{l=1}^D b_l I_n(\varphi_l)\right)^2
\leq \sup_{\sum b_l^2=1}\left( \sum_{l=1}^D b_l^2 \right)\left(\sum_{l=1}^{D} I_n^2(\varphi_l)\right)=\sum_{l=1}^{D} I_n^2(\varphi_l).
\]
We can remark that  $I_n(\varphi_{l})=\hat{a}_{l}-
\EE{\hat{a}_{l}}$ (see equation \eqref{def_I_J}) and by consequence, $\EE{I_n^2(\varphi_{l})}=\Var{\hat{a}_{l}}$ .  
By  \eqref{majoration_variance_somme_al}:
\begin{align*}\EE{\sup_{\ft\in\rond{B}}I^2_n(\ft)}&\leq \sum_{l=1}^D\EE{I_n^2(\varphi_l)}=\sum_{l=1}^D\Var[z_0]{\hat{a}_l}\\
&\leq \frac{(\psi_1+\Cl\psi_2) D}{n}+\frac{\cl\psi_1}{n}:=\frac{H^2}{n}.\end{align*}
We can now apply Lemma \ref{lem_talagrand_mixing} with \[M_2=\psi_1^{1/2}D^{1/2},\quad V=c_1D^{1/2}+c_2 D/q_n\quad\text{and}\quad H^2=(\psi_1+\Cl\psi_2)D+c_{\lambda}\psi_1.\]  
For $p(m,m')\geq 6(\psi_1+\Cl\psi_2)D/n+6\cl\psi_1/n$, we get
\stepcounter{numero}
\begin{align*}
E_{\arabic{numero}}:=&\EE{\sup_{\ft\in\rond{B}}I_n^2(\ft)-p(m,m')}_+\\
&\leq K_1\frac{V}{n}\exp\left(-k_1 \frac{H^2}{V}\right)
+K_2\frac{M_2^2}{p_n^2}
\exp\left(-k_2\frac{\sqrt{p_n} H}{\sqrt{q_n M_2}}\right)
+2\frac{M_2^2}{n^2}.
\end{align*}
As $2/(x+y)\geq \min(1/x,1/y)$,
\begin{align*}
    \exp\left(-k_1\frac{H^2}{V}\right)&
\leq \exp\left(
-\frac{k_1}{2}\min\left( \frac{H^2}{c_1D^{1/2}},\frac{H^2}{c_2D/q_n}\right)
\right)\\
&\leq \exp\left( - \frac{k_1H^2}{2c_1D^{1/2}}\right)+\exp\left(-\frac{k_1H^2}{2c_2Dq_n}\right)
\end{align*}
and therefore
\begin{align*}
E_{\arabic{numero}}&\leq K'_1  \left(\frac{D^{1/2}}{n}+\frac{D}{nq_n}\right)
\left(\exp\left(-\frac{k'_1 D}{D^{1/2}}\right)
+\exp\left(-\frac{k''_1 D q_n}{D}\right)\right)\\
&+ \frac{ K'_2D}{p_n^2}
\exp\left(-k'_2 p_n^{1/2}\frac{ D^{1/2}}{q_n^{1/2}}\frac{1}{D^{1/2}}\right)
+K'_3\frac{D}{n^2}\\
&\leq K_{1}^{\lambda}\left(\frac{D}{n}\exp(-k_{1}^{\lambda} D^{1/2}) 
+\frac{D}{n}\exp(-k^{\lambda}_1  q_n)\right)+ K_{2}^{\lambda} \frac{D\ln^2(n)}{n^2}
\exp\left(-k_2^{\lambda}  \frac{n^{1/2}}{\ln(n)}\right)+K_3^{\lambda}\frac{D}{n^2}
\end{align*}
where $(K_{i}^{\lambda})_{1\leq i\leq 3}$ and $(k_i^{\lambda})_{1\leq i\leq 2}$  depend on  $(\Vl,R,\gamma,(\psi_1,\psi_2),\norm{\nul}_{L^2(\mathcal{I})})$.  
The second term can be made smaller than $n^{-2}$ for $c_q$ large enough. The third is also smaller to $n^{-2}$ thanks to the exponential term. 
Then 
\[ \EE{\sup_{s\in\rond{B}_{m,m'}}I_n^2(s)-p(m,m')} \leq K_1^{\lambda} \frac{D_{m,m'}}{n}\exp(-k_1^{\lambda} D_{m,m'}^{1/2}) +K_4^{\lambda} \frac{D_{m,m'}}{n^2}.\]
All the dimensions $D_{m,m'}$ are different, so 
$\sum_{m'\in\rond{M_n}} D_{m,m'}e^{-cD_{m,m'}^{1/2}}\leq \sum_{l=1}^{\infty} le^{-cl^{1/2}}<\infty$. Moreover, as $\sup_{m'\in\rond{M}_n}D_{m,m'}\leq \sqrt{n}$, $\sum_{m'\in\rond{M}_n} D_{m,m'}\leq\max_{m'\in\rond{M}_n} D_{m,m'}^2\leq n$. Then   by \eqref{majoration_I_somme}, 
\begin{equation}
 \EE{\sup_{\ft\in\rond{B}_{m,\hat{m}}}I_n^2(\ft)-p(m,\hat{m})}
\leq \frac{c}{n}.
\label{majoration_risque_adaptatif_In}
\end{equation}
Collecting \eqref{risk_adaptif_decompo}, \eqref{majoration_J} and \eqref{majoration_risque_adaptatif_In}, for any $m\in\rond{M}_n$: 
\[ \EE{\norm{\hnuml[\hat{m}]-\nul}_{L^2}^ 2}\leq 3\norm{\numl-\nul}_{L^2}^2+4pen(m)+\frac{c}{n}.\]

All the constants involved in the bound of $J_n^2$ and $I_n^2$ ($M_2$, $H$, $V$) depends on $\Vl$, $\gamma$, $R$ and $\norm{\nul}_{L^2(\mathcal{I})}\leq \norm{\nul}_{\Besov_{2,\infty}^{\alpha}(\mathcal{I})}$.   Then there exists an continuous function $\psi$ such that $x\to \psi(x,v,c,r)$ is increasing and 
\[
 \EE{\norm{\hnuml[\hat{m}]-\nul}_{L^2(\mathcal{I})}^ 2}\leq 3\norm{\numl-\nul}_{L^2(\mathcal{I})}^2+4pen(m)+\frac{\psi\left(\norm{\nul}_{B_{2,\infty}^{\alpha}},\Vl,\gamma, R\right)}{n}.\]

\subsection{Proof of  Theorem \ref{lem_estimation_lambda}}

Let us first  control $\EE{(\Dge(y)-\Dg(y))^2}$. 
As $\phi$ is a diffeomorphism, the function 
$\left(\phi_x^{-1}\right)'(y)$ is bounded on $[0,i_2]\times\mathcal{I}$. The function  $s_{x,z}(y)=\left(\phi_{x}^{-1}\right)'(y)\nbOne_{\{x\leq y\leq z\}}$ is bounded by a constant on $\mathcal{I}$:
\begin{equation} \norm{s_{x,z}}_{L^{\infty}(\mathcal{I})}\leq \sup_{x\in[0,i_2],y\in\mathcal{I}}\left(\phi_x^{-1}\right)'(y):=\Phi_1.\label{eq_def_phi1}
\end{equation}
We have that 
\[
\EE{\left(\Dge(y)-\Dg(y)\right)^2}=
\EE{\left(\frac{1}{n}\sum_{k=1}^n s_{Z_{k-1},Y_k}(y)-\EE[\xil]{s_{Z_0,Y_1}(y)}
	\right)^2}\label{eq_majoration_E_Dn}
\]
with $\xil$ the stationary density of $(Z_{k-1},Y_k)$ introduced in Assumption A2.
By Lemma \ref{cor_majoration_variance}, we have
\begin{align*}
\EE{\left(\Dge(y)-\Dg(y)\right)^2}&
\leq \frac{1}{n}\left(\Phi_1^2+\Phi_1^2\int \Gl(z)\mul(dz)\right)+\frac{\Phi_1^2c_{\lambda}}{n^2}
+\Phi_1^2\frac{R^2\Vl^2(z_0)}{n^2(1-\gamma)^2}
\\
&\leq \frac{\Phi_1^2(1+\Cl)}{n}+\frac{\Phi_1^2(c_{\lambda}+R^2V_{\lambda}^2(z_0)/(1-\gamma)^2)}{n^2}
\end{align*} 
and therefore 
\begin{equation}\EE[z_0]{(\Dge(y)-\Dg(y))^2}\leq \frac{\Phi_1^ 2}{n}\left(1+\Cl\right)+\frac{c}{n^2}.\label{eq:majoration_D_Dn}
\end{equation}
For $n$ large enough, $1/\ln(n)$ is smaller than $ D_0/2$ ($D_0$ is defined in Assumption A\ref{hypo_D_positif}) and then 
 by Markov inequality, 
\begin{equation}
\PP{\Dge(y)\leq 1/\ln(n)}\leq \PP{\vert \Dge(y)-\Dg(y)\vert \geq  D_0/2}\leq \frac{4}{D_0^2}\EE{(\Dge(y)-\Dg(y))^2}.\label{eq:majoration_proba_D}
\end{equation}

As $\nul$ is a positive function, $\vert \hat{\lambda}_n(y)-\lambda(y)\vert \nbOne_{\{\hnuml[\hat{m}](y)\geq 0\}}\leq \vert \hat{\lambda}_n(y)-\lambda(y)\vert$ and therefore, according to the definition of the estimator $\hat{\lambda}_n$ (see \eqref{def_hat_lambda}),
\[\vert \hat{\lambda}_n(y)-\lambda(y)\vert \leq \left\vert \frac{\hnuml[\hat{m}](y)}{\Dge(y)}-\frac{\nul(y)}{\Dg(y)}\right\vert 
\nbOne_{\{\Dge(y)\geq 1/\ln(n)\}}+\lambda(y)\nbOne_{\{\Dge(y)\leq 1/\ln(n)\}}.\]
We can write: 
\[
 \left\vert \frac{\hnuml[\hat{m}](y)}{\Dge(y)}-\frac{\nul(y)}{\Dg(y)}\right\vert 
\leq \left\vert \frac{\hnuml[\hat{m}](y)-\nul(y)}{\Dge(y)}+
\frac{\nul(y)}{\Dge(y)\Dg(y)}(\Dg(y)-\Dge(y))
\right\vert 
.
\]
As  $\Dg\geq D_0$ by Assumption A\ref{hypo_D_positif}: 
\[\vert \hat{\lambda}_n(y)-\lambda(y)\vert 
\leq \ln(n)\left(\vert \hnuml[\hat{m}](y)-\nul(y)\vert \right)+\ln(n)\frac{\vert \Dge(y)-\Dg(y)\vert }{ D_0}\nul(y)+\lambda(y)\nbOne_{\{\Dge(y)\leq 1/\ln(n)\}}.\]
By \eqref{eq:majoration_D_Dn} and \eqref{eq:majoration_proba_D},
\begin{align*}
\EE{\norm{\hat{\lambda}_n-\lambda}_{L^2(\mathcal{I})}^2}
&\leq 3\ln^2(n)\EE{\norm{\hnuml[\hat{m}]-\nul }_{L^2(\mathcal{I})}^2}\\
&+3D_0^{-2}\ln^2(n)
\int_{\mathcal{I}}\EE{\left(\Dge(y)-\Dg(y)\right)^2}\nul^2(y)dy\\
&+12D_0^{-2}\int_{\mathcal{I}}\EE{\left(\Dge(y)-\Dg(y)\right)^2} \lambda^2(y)dy\\
&\leq 3\ln^2(n)\EE{\norm{\hnuml[\hat{m}]-\nul }_{L^2(\mathcal{I})}^2}+\cl'\frac{\ln^2(n)}{n}
\end{align*}
with $\cl'=\frac{\Phi_1^ 2}{D_0^2}(4+2\Cl)(3\norm{\nul}_{L^2(\mathcal{I})}^2+12\norm{\lambda}_{L^2(\mathcal{I})}^2)$.

\subsection{Proof of Theorem \ref{lowerparametric}}

We use the reduction scheme described in \citet[chapter 2]{tsybakov}.
By Markov inequality,  
\[
C'^2n^{-2\alpha/(2\alpha+1)}\El{\lambda}{\norm{\hat{\lambda}_n-\lambda}_{L^2(\mathcal{I})}^2}
\geq \Pl{\lambda}{\norm{\hat{\lambda}_n-\lambda}_{L^2(\mathcal{I})}\geq C' n^{-\alpha/(2\alpha+1)}}.\]
Our aim is to show that  
\[\inf_{\hat{\lambda}_n}\sup_{\lambda\in \Fcb} \Pl{\lambda}{\norm{\hat{\lambda}_n-\lambda}_{L^2(\mathcal{I})}\geq C' n^{-\alpha/(2\alpha+1)}}>0.\]
Instead of searching an infimum on the whole class $\Fcb$, we can limit ourselves to the finite set $\{\lambda_0,\ldots,\lambda_{P_n}\} \in \Fcb$, such that 
\begin{equation} \norm{\lambda_i-\lambda_j}_{L^2(\mathcal{I})}\geq 2C'n^{-\alpha/(2\alpha +1)}\nbOne_{\{i\neq j\}}.\label{eq_lambda_j_distants}
\end{equation}
 Then 
 \stepcounter{numero}
\begin{align*}
    E_{\arabic{numero}}&:=\inf_{\hat{\lambda}_n}\sup_{\lambda\in \Fcb}  \Pl{\lambda}{\norm{\hat{\lambda}_n-\lambda}_{L^2(\mathcal{I})}\geq C' n^{-\frac{\alpha}{2\alpha+1}}}\\
    &\geq \inf_{\hat{\lambda}_n}\max_{j}  \Pl{\lambda_{j}}{\norm{\hat{\lambda}_n-\lambda_j}_{L^2(\mathcal{I})}\geq C' n^{-\frac{\alpha}{2\alpha+1}}}.
\end{align*}
We note $\psi^*$ the predictor  
\[\psi^*:=\arg\min_{0\leq j\leq P_n} \norm{\hat{\lambda}_n-\lambda_j}_{L^2(\mathcal{I})}.\]
By the triangular inequality, 
$\norm{\hat{\lambda}_n-\lambda_j}_{L^2(\mathcal{I})}\geq\norm{\lambda_{\psi^*}-\lambda_j}_{L^2(\mathcal{I})}- \norm{\lambda_{\psi^*}-\hat{\lambda}_n}_{L^2(\mathcal{I})}$.

Consequently, as $\norm{\hat{\lambda}_n-\lambda_j}_{L^2(\mathcal{I})}\geq \norm{\hat{\lambda}_n-\lambda_{\psi^*}}_{L^2(\mathcal{I})}$, 
\[\left\{ 
\norm{\hat{\lambda}_n-\lambda_j}_{L^2(\mathcal{I})}\geq A_n \right\} \supseteq  \left\{\left\{\norm{\lambda_{\psi^*}-\hat{\lambda}_n}_{L^2(\mathcal{I})}\geq A_n\right\} \cup \left\{\norm{\lambda_{\psi^*}-\lambda_j}_{L^2(\mathcal{I})} \geq 2A_n\right\}\right\}.\]
By \eqref{eq_lambda_j_distants},
	$\norm{\lambda_{\psi^*}-\lambda_j}_{L^2(\mathcal{I})}\geq 2C'n^{-\alpha/(2\alpha +1)}\nbOne_{\{\psi^*\neq j\}}$.
Then setting $A_n=C' n^{-\alpha/(2\alpha+1)}$, \\
$\left\{ 
\norm{\hat{\lambda}_n-\lambda_j}_{L^2(\mathcal{I})}\geq C'n^{-\alpha/(2\alpha+1)} \right\} \supseteq  \left\{ \psi^*\neq j\right\}$
 and therefore: 
\[\inf_{\hat{\lambda}_n}\sup_{\lambda\in\Fcb}\Pl{\lambda}
{
\norm{ \hat{\lambda}_n-\lambda}_{L^2(\mathcal{I})}^2\geq C'n^{-\alpha/(2\alpha+1)}}\geq \inf_{\hat{\lambda}_n}\max_{j} \Pl{\lambda_j}{\psi^*\neq j}.
\]
We denote by $\mathbf{P}^{\lambda_j}$ the law  of $(Z_0,Y_1,Z_1,\ldots,Y_n,Z_n)$ under $\lambda_j$. 
The following lemma is exactly  Theorem 2.5 of \citet{tsybakov}.
\begin{lemma}\label{lem_minoration_L2}
Let us consider a series of functions $\lambda_0,\ldots,\lambda_{P_n}$ such that:
\begin{enumerate}
\item The functions $\lambda_i$ are sufficiently apart: $\forall i\neq j$
\[ \norm{\lambda_i-\lambda_j}_{L^2(\mathcal{I})}\geq 2C'n^{-\alpha/(2\alpha +1)}.\]
\item For all $i$, the function $\lambda_i$ belongs to the subspace $\Fcb$. 
 \item Absolute continuity: $\forall 1\leq j\leq P_n$, $\mathbf{P}^{\lambda_j}<<\mathbf{P}^{\lambda_0}$.  
\item The distance between the measures of probabilities is not too large: 
\[\frac{1}{P_n}\sum_{j=1}^{P_n}
\chi^2(\mathbf{P}^{\lambda_j},\mathbf{P}^{\lambda_0})\leq c\ln(P_n)\] with $0<c<1/8$, and $\chi^2(.,.)$  the $\chi$-square divergence.
\end{enumerate}
Then 
\begin{align*}\inf_{\hat{\lambda}_n}\sup_{\lambda\in \Fcb}
C'^2n^{-2\alpha/(2\alpha+1)} \El{\lambda}{\norm{\hat{\lambda}_n-\lambda}_{L^2(\mathcal{I})}^2}&\geq  \inf_{\hat{\lambda}_n} \max_j\Pl{\lambda_{j}}{\psi^*\neq j}\\
&\geq 
\frac{\sqrt{P_n}}{1+\sqrt{P_n}}\left(1-2c-2\sqrt{\frac{c}{\ln(P_n)}}\right)&\\
&>0.
\end{align*}
\end{lemma}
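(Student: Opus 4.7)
The plan is to follow the standard Fano-type argument developed in Tsybakov's book (Chapter 2) since the statement is essentially Theorem~2.5 there. The reduction from the infimum over $\Ecbhh$ to a lower bound on $\inf_{\psi} \max_{0\leq j\leq P_n} \mathbf{P}_{\lambda_j}(\psi\neq j)$, where $\psi$ ranges over measurable maps into $\{0,\ldots,P_n\}$, is already carried out in the preamble to the lemma via the minimum-distance selector $\psi^*$. What remains is the abstract testing inequality: given $P_n+1$ hypotheses with bounded average divergence to $\mathbf{P}_{\lambda_0}$, lower bound the Bayes testing error.

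First I would replace the maximum by the uniform-prior average, $\max_j \mathbf{P}_{\lambda_j}(\psi\neq j)\geq \frac{1}{P_n+1}\sum_j \mathbf{P}_{\lambda_j}(\psi\neq j)=:\bar p_e(\psi)$, and exhibit the Bayes-optimal test $\psi_{\mathrm{opt}}(X)=\arg\max_j p_j(X)$, where $p_j=d\mathbf{P}_{\lambda_j}/d\mathbf{P}_{\lambda_0}$ exists by assumption~(iii). Writing $\bar p_e$ as an integral under $\mathbf{P}_{\lambda_0}$ of a minimum of likelihood ratios, the task becomes to bound from below a functional of the $(p_j)$ given only the average $\chi^2$ constraint $\frac{1}{P_n}\sum_j\mathbf{E}_{\lambda_0}[(p_j-1)^2]\leq c\ln P_n$.

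The key truncation step, which produces the characteristic factor $\sqrt{P_n}/(1+\sqrt{P_n})$, is to introduce the event $\mathcal{A}=\{\max_{1\leq j\leq P_n} p_j\leq 1+\sqrt{P_n}\}$. On $\mathcal{A}$ the posterior over $\{\lambda_1,\ldots,\lambda_{P_n}\}$ cannot concentrate too strongly on any single hypothesis compared with $\lambda_0$, and a direct combinatorial lower bound yields $\bar p_e(\psi)\geq \frac{\sqrt{P_n}}{1+\sqrt{P_n}}\mathbf{P}_{\lambda_0}(\mathcal{A})$ for any test $\psi$. It then remains to show $\mathbf{P}_{\lambda_0}(\mathcal{A})\geq 1-2c-2\sqrt{c/\ln P_n}$. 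Introducing the log-likelihood ratios, using $\mathrm{KL}(\mathbf{P}_{\lambda_j},\mathbf{P}_{\lambda_0})\leq \log(1+\chi^2(\mathbf{P}_{\lambda_j},\mathbf{P}_{\lambda_0}))\leq \chi^2(\mathbf{P}_{\lambda_j},\mathbf{P}_{\lambda_0})$, and applying a Chebyshev argument to the centered log-likelihood gives this bound once the assumption $c<1/8$ is used to make the deviation term subdominant.

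The main obstacle is the truncation step: identifying the right threshold $\tau=1+\sqrt{P_n}$ and correctly quantifying how the likelihood-ratio constraint transfers into a lower bound on $\mathbf{P}_{\lambda_0}(\mathcal{A})$ via an appropriate concentration inequality. Once this is done, plugging in the hypothesis $\frac{1}{P_n}\sum_{j}\chi^2(\mathbf{P}_{\lambda_j},\mathbf{P}_{\lambda_0})\leq c\ln P_n$ with $0<c<1/8$ produces the announced inequality. Finally, combining with the Markov-inequality step from the preamble, $C'^2 n^{-2\alpha/(2\alpha+1)} \mathbf{E}_\lambda\|\hat\lambda_n-\lambda\|_{L^2}^2\geq \mathbf{P}_\lambda(\|\hat\lambda_n-\lambda\|_{L^2}\geq C'n^{-\alpha/(2\alpha+1)})$, yields the stated minimax lower bound and closes the proof.
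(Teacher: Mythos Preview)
Your identification is exactly right: the paper does not prove this lemma at all but simply states that it ``is exactly Theorem 2.5 of \citet{tsybakov}'' and uses it as a black box. Your proposal goes further by sketching the mechanics of Tsybakov's argument (reduction to a multiple-testing problem, likelihood-ratio truncation, and the concentration step), which is more than the paper itself supplies; the one technical point worth noting is that Tsybakov's Theorem~2.5 is stated with Kullback divergences, so the bridge you mention via $\mathrm{KL}\leq \log(1+\chi^2)\leq \chi^2$ is indeed what makes the $\chi^2$ hypothesis here sufficient.
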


\paragraph*{ Step 1: Construction of $(\lambda_0,\ldots\lambda_{P_n})$.} 
Let us set \[\lambda_0(x)= \ep \nbOne_{\{i_1\leq x\leq j_2\}}+ \mathbf{a} \frac{x^b}{(b+1)\mg(x)}\nbOne_{\{x>j_2\}}\quad\text{where}\quad \ep=\max_{x\in[i_1,j_2] }\mathbf{a} \frac{x^b}{(b+1)\mg(x)}\] with $\mathcal{J}=[j_1, j_2]$ defined in (\ref{matcalJ}).
As $\lambda_0$ is constant on $\mathcal{J}$, this function belongs to the H\"older space $H^{\alpha}(\mathcal{J})$ and $\norm{\lambda_0}_{H^{\alpha}(\mathcal{J})}=\ep$ (see Appendix \ref{beshol} for the definition of the Hölder space). It remains to ensure that it belongs to $\Fcb$. If $\ep>\Lg$, then $\Fcb=\emptyset$. 
If $\Lg=\ep$, then any function $\lambda\in\Fcb$ satisfies: $\forall x\in[i_1,j_2], \lambda(x)=\lambda_0(x)$. Let us assume that $\ep<\Lg$: in that case, there exists $\delta>0$ such that 
$\norm{\lambda_0}_{H^{\alpha}(\mathcal{J})}\leq \Lg-\delta$.

We consider a non-negative function $K\in H^{\alpha}(\R)$, bounded,  with support in $[0,j_2-i_1[$ and such that $\norm{K}_{L^1}\leq 1$. 
 We set $h_n=n^{-1/(2\alpha+1)}$,  $\nb=\lceil1/h_n\rceil$ and, for $0\leq k\leq p_{n}-1$, $x_k=i_1+h_nk(j_2-i_1)$.   We consider the functions 
$\varphi_k(x):= \cons h_n^{\alpha}K\left((x-x_k)/h_n\right)$ with $a<1$. The functions $\varphi_k$ have support in $[x_k,x_{k+1})\subset \mathcal{J}$. Moreover, by a change of variable $y=(x-x_k)/h_n$, 
$\norm{\varphi_k}_{L^1}=a h_n^{\alpha+1} \norm{K}_{L^1}\leq ah_n^{\alpha+1}$ and $\norm{\varphi_k}_{L^2}^2=a^2h_n^{2\alpha+1} \norm{K}_{L^2}^2$.  We consider the set of functions 
\[\rond{G}_n:=
\left\{\lambda_{\epsilon}:=\lambda_0+\sum_{k=0}^{\nb-1}\epsilon_k\varphi_k,\quad (\epsilon_k)\in\{0,1\}^{\nb} \right\}
.\]
 The cardinal of $\rond{G}_n$ is $2^{\nb}$. For two vectors $(\epsilon,\eta)$ with values in $\{0,1\}^{\nb}$, the distance  between two functions $\lambda_{\epsilon}$ and $\lambda_{\eta}$ is: 
\begin{equation}
\norm{\lambda_{\epsilon}-\lambda_{\eta}}_{L^2}^2=
a^2h_n^{2\alpha+1}\norm{K}_{L^2}^2\sum_{k=1}^{\nb} (\epsilon_k-\eta_k)^2.\label{eq:distance_deux_fonctions_ep}
\end{equation}
As the series $\epsilon_k$ and $\eta_k$ have values in $\{0,1\}$, the quantity 
\[\rho(\epsilon,\eta):=\sum_{k=1}^{\nb}\nbOne_{\{\epsilon_k\neq  \eta_k\}}=\sum_{k=1}^{\nb} (\epsilon_k-\eta_k)^2\]
 is the Hamming distance between $\eta$ and $\epsilon$. 
To apply Lemma \ref{lem_minoration_L2}, we need that, $\forall \eta\neq \epsilon$,  
\[ \norm{\lambda_{\epsilon}-\lambda_{\eta}}_{L^2}^2\geq 4C'^2 h_n^{2\alpha} \text{ and consequently }\rho(\epsilon,\eta)\geq Ch_n^{-1}.\] 
This is not the case if we take the whole $\rond{G}_n$ (the minimal Hamming distance between two vectors $\epsilon$ and $\eta$ is 1). We need to extract a sub-series of functions. 
According to \citet[Lemma 2.7]{tsybakov} (bound of Varshamov-Gilbert), it is possible to extract a family $(\epsilon_{(0)},\ldots,\epsilon_{(P_n)})$ of the set $\Omega=\{0,1\}^{\nb}$ such that $\epsilon_{(0)}=(0,\ldots,0)$ and 
 \[\forall\:\;  0\leq j<k\leq P_n,\quad\rho(\epsilon_{(j)},\epsilon_{(k)})\geq \nb/8, \quad \textrm{and}\quad P_n\geq 2^{\nb/8}.\]
As $p_n\geq n^{1/(2\alpha+1)}$, 
\begin{equation}
\ln(P_n)\geq \ln(2)n^{1/(2\alpha+1)}/8 .\label{ln_Pn}
\end{equation}
 We define 
 \[\lambda_{j}:=\lambda_{\epsilon_{(j)}}\quad \textrm{and}\quad \rond{H}_n=\{\lambda_0,\lambda_1,\ldots,\lambda_{P_n}\}.\]

 Then, for any $\lambda_j,\lambda_k\in\rond{H}_n$, if $j\neq k$ , as $p_n=\lceil1/h_n\rceil $, by \eqref{eq:distance_deux_fonctions_ep},
 \[\norm{\lambda_j-\lambda_k}_{L^2}^2\geq \cons^2 \norm{K}_{L^2}^2h_n^{2\alpha+1}\nb/8\geq\cons^2\norm{K}_{L^2}^2h_n^{2\alpha}/8.\] 
This is exactly the expected lower bound if we take $C'=\cons\norm{K}_{L^2}/(4\sqrt{2})$. 
 
\paragraph*{Step 2: Functions $\lambda_j$ belong to $\Fcb$.}

We already know that $\lambda_0$ belongs to $\Fcb$. Let us first compute the norm of $\lambda_j$ on $H^{\alpha}(\mathcal{J})$.
 Let us set $\rg=\lfloor\alpha\rfloor$.   We have that  $(K(./h_n))^{(\rg)}=h_n^{-\rg} K^{(\rg)}(./h_n)$. We compute the modulus of smoothness: 
\begin{align*}
\omega(\varphi_k^{(\rg)},t)_{\infty}&=a\omega\left( 
h_n^{\alpha}\left(
K\left(\frac{.-x_k}{h_n}\right)\right)^{(\rg)},t
\right)_{\infty}
=ah_n^{\alpha} \omega\left(h_n^{-\rg}K^{(\rg)}\left(\frac{.-x_k}{h_n}\right),t\right)_{\infty}\\
&=ah_n^{\alpha-\rg} \omega\left(K^{(\rg)},\frac{t}{h_n}\right)_{\infty}
\end{align*}
and 
\begin{align*}
|\varphi_{k}\vert_{H^{\alpha}}&=\sup_{t>0}t^{\rg-\alpha} \omega(\varphi_k^{(\rg)},t)_{\infty}=\cons\sup_{t>0} t^{\rg-\alpha}h_n^{\alpha-\rg} \omega\left(K^{(\rg)},\frac{t}{h_n}\right)_{\infty}\\
&=a\sup_{z>0} z^{\rg-\alpha}\omega(K^{(\rg)},z)=\cons\vert K\vert_{H^\alpha}
\end{align*} 
by the change of variable $z=t/h_n$. 
The functions $\varphi_k$ have disjoint supports. For any $(x,y)\in\mathcal{J}$, there exists $(i,j)$ such that $x\in[x_i,x_{i+1}($ and $y\in[x_j,x_{j+1}($. 
Then 
\[\lambda_k^{(\rg)}(x)-\lambda_k^{(\rg)}(y)= \ep_i\left(\varphi_i^{(\rg)}(x)-\varphi_i^{(\rg)}(y)\right)+\ep_j\left(\varphi_j^{(\rg)}(x)-\varphi_j^{(\rg)}(y)\right).\]
Therefore 
\[\omega(\lambda_k^{(\rg)},t)_{\infty}\leq \sup_{i,j} \left(\omega(\varphi_{j}^{(\rg)},t)_{\infty}+\omega(\varphi_i^{(\rg)},t)_{\infty}\right)\leq 2\omega (\varphi_1^{(\rg)},t)_{\infty}\]
and
$\vert \lambda_k\vert _{H^{\alpha}(\mathcal{J})}\leq 2a\vert K\vert_{H^{\alpha}}$.
Moreover, 
\[\norm{\lambda_k}_{L^\infty (\mathcal{J})}\leq \norm{\lambda_0}_{L^\infty (\mathcal{J})}+ah_n^{\alpha}\norm{K}_{L^\infty}\leq \norm{\lambda_0}_{L^\infty (\mathcal{J})}+2a\norm{K}_{L^\infty }\] and consequently $\norm{\lambda_k}_{H^{\alpha}(\mathcal{J})}\leq \norm{\lambda_0}_{H^{\alpha}(\mathcal{J})}+2a\norm{K}_{H^{\alpha}}$.
Then $\lambda_k\in H^{\alpha}(\mathcal{J},\Lg)$ for $a$ sufficiently small. It remains to check that $\lambda_k\in\Fcb$. For any $0\leq k\leq P_n$:
\begin{enumerate}
\item As $K$ is non-negative, $\forall x \geq i_1$, $\lambda_k(x)\geq \mathbf{a}\frac{x^{b}}{(b+1)\mathbf{m}(x)}$. 
\item  $ \norm{\lambda_k}_{\holder(\mathcal{J})}\leq \Lg$ for $a$ small enough. 
\item $\int_0^{i_1}\lambda_k(u)\Mg(u)du=0\leq \llg$. 
\end{enumerate}
Therefore $\lambda_k\in\Fcb$ for $a$ small enough. 
\paragraph*{Step 3: Absolute continuity.}
We denote by $\pj$ the transition densities  $\pl[\lambda_j]$. As $(Z_0,Y_1,Z_1,\ldots,Y_n,Z_n)$ is a Markov process,
\[\mathbf{P}^{\lambda_j}(z_0,dy_1,dz_1\ldots,dy_n,dz_n)=\pj (z_0,y_1)Q(y _1,dz_1)\ldots \pj(z_{n-1},y_n)Q(y_n,dz_n)dy_1 \ldots dy_n .\]
By  (\ref{densitet1}),  we can rewrite: 
$\po (x ,y)=A_{x,y} \exp (-\tilde{A}_{x ,y})$ where 
\begin{equation}A_{x,y}:=\lambda_0 (y)(\phi_x^{-1})'(y)\nbOne_{\{y\geq x\}},\quad  \Au:=\int_{x}^{y}\lambda_0 (u)(\phi_x^{-1})'(u)du \label{eq_definition_A}\end{equation}
and 
$\pj (x ,y)=(\A+\B ) \exp (-\Au-\Bu)$
where  $\B=\sum_{k=1}^m \epsilon_k B^k_{x,y}$, $\Bu=\sum_{k=1}^{\nb} \epsilon_k \tilde{B}^{k}_{x,y}$ and 
\begin{equation}
B_{x,y}^{k}:= \varphi_k(x)(\phi_x^{-1})'(y)\nbOne_{\{y\geq x\}},
 \quad  \tilde{B}_{x ,y}^{k}:=\int_{x}^{y} \varphi_k(u)(\phi_x^{-1})'(u)du
.\label{eq_def_B}\end{equation}
The probability density $\mathbf{P}^{\lambda_0}$ is null if one of the $Q(y_i,dz_i)$ is null, if one of the indicator function  $\nbOne_{\{y_{i+1}\geq z_i\}}=0$, or if one $y_i$ is smaller than $i_1$; then $\mathbf{P}^{\lambda_j}$ is absolutely continuous with
respect to $\mathbf{P}^{\lambda_0}$.

\paragraph*{Step 4: The $\chi^2$ divergence.}

As $\mathbf{P}^{\lambda_0}, \mathbf{P}^{\lambda_j}$ are equivalent measures,  we have: 
\[ 
\chi^2 (\mathbf{P}^{\lambda_j}, \mathbf{P}^{\lambda_0})=\int \left(\frac{d \mathbf{P}^{\lambda_j}}{d \mathbf{P}^{\lambda_0}}\right)^2 d \mathbf{P}^{\lambda_0} -1.
\]
\stepcounter{numero}
Let us set $E_{\arabic{numero}}:=\chi^2 (\mathbf{P}^{\lambda_j}, \mathbf{P}^{\lambda_0})+1$. We can write: 

\begin{multline*}
E_{\arabic{numero}}=\int_{(\R^+)^n}  \left(\frac{\pj (z_0 ,y_1)...\pj(z_{n-1} ,y_n)}{\po (z_0 ,y_1)...\po (z_{n-1} ,y_n)}\right)^2 \po (z_0 ,y_1)...\po (z_{n-1} ,y_n)\\
\times \int_{(\R^+)^{n}}Q(y_1,dz_1)\ldots Q(y_n,dz_n)dy_1\ldots dy_n.
\end{multline*}
As $Q$ is the transition density, for any $y_n$, $\int_{\R^+} Q(y_n,dz_n)=1$. Moreover,  as $\int_{\R^+}\po(x,y)dy=\int_{\R^+}\pj(x,y)dy=1$,  
\begin{align}
E_{\arabic{numero}}
&=\int_{(\R^+)^{2(n-1)}} \frac{(\pj (z_0 ,y_1)...\pj (z_{n-2} ,y_{n-1}))^2}{\po (z_0 ,y_1)...\po (z_{n-2} ,y_{n-1})}
Q(y_1,dz_1)\ldots Q(y_{n-1},dz_{n-1})dy_1\ldots dy_{n-1}  \nonumber\\
 &\times \int_{\R^+}\frac{(\pj (z_{n-1} ,y_{n}))^2}{\po (z_{n-1} ,y_{n})}dy_n.
\label{chi2_L2}
\end{align}

This expression of the $\chi^2$ divergence enables us to approximate it more closely. 
Let us set 
\begin{align}
DP&:= \int_{\R^+} \frac{\left(\pj(x,y)\right)^2}{\po(x,y)}dy-1=\int_{\R^+} \left(\frac{\pj (x ,y)}{\po (x ,y)}-1\right)^2 \po (x ,y)dy\nonumber\\
&= \int_{\R^+} \left(
\left(1+\frac{B_{x,y}}{A_{x,y}}\right)
\exp\left(-\tilde{B}_{x,y}\right)- 1
\right)^2
A_{x,y}\exp\left(-\tilde{A}_{x,y}\right)dy.
\label{def_DP}\end{align}
As the support of $\varphi_k$ is included in $\mathcal{J}$, we can remark that $B_{x,y}$ is null on $\mathcal{J}^c$ and 
\[
\tilde{B}_{x,y}^{k} =\int_{[x,y]\cap \mathcal{J}}  \varphi_k(u)(\phi_{x}^{-1})'(u)du.
\] 
We bound the $\chi^2$-divergence differently on $\mathcal{J}$ and $\mathcal{J}^c$: $DP=R_1+R_2$ where 
\begin{align*}
R_1
&:= \int_{\mathcal{J}} \left(\left(1+\frac{\B}{\A}\right) \exp \left(-\Bu\right) -1\right)^2 \A \exp (-\Au) dy,\\ 
R_2&
:= \int_{\mathcal{J}^c} \left( \exp \left(-\Bu\right) -1\right)^2 \A \exp (-\Au) dy.
\end{align*}
We have that 
$B_{x,y}^k\leq  \mathbf{M}(y) \norm{\varphi_k}_{\infty}  \nbOne_{\{y\geq x\}}\nbOne_{\{y\in [x_k,x_{k+1}(\}}$
and therefore, as $\norm{\varphi_k}_{\infty}=ah_n^{\alpha}\norm{K}_{\infty}$, 
\begin{equation}
B_{x,y}\leq  \sup_{y\in \mathcal{J}} \mathbf{M}(y) ah_n^{\alpha}\norm{K}_{\infty}\nbOne_{\{y\in \mathcal{J}\}}\leq C ah_n^{\alpha}\nbOne_{\{y\in \mathcal{J}\}}.\label{eq_bound_B_xy}
\end{equation}
By \eqref{eq_def_B}, we obtain, as the functions $\varphi_k$ are supported in $\mathcal{J}$: 
\[
\tilde{B}_{x,y}^{k} = \int_x^{y}
\varphi_k(z)(\phi_x^{-1})'(z)dz
\leq \sup_{z\in \mathcal{J}}(\Mg(z)) \sup_{k} \norm{\varphi_k}_{L^1}
\leq ah_n^{\alpha+1}\sup_{z\in \mathcal{J}}(\Mg(z))
\]
and, as $\nb=\lceil1/h_n\rceil$,
\begin{equation}
    \tilde{B}_{x,y}\leq \sum_{k=1}^{p_n} \tilde{B}_{x,y}^k\leq C'ap_nh_n^{\alpha+1}\leq C'a h_n^{\alpha}.\label{eq_bound_tilde_Bxy}
\end{equation}
Then  by \eqref{eq_bound_tilde_Bxy} and as $\int_{\R^+} \A \exp (-\Au) dy=1$ \[R_2\leq \int_{\R^+} O( a^2h_n^{2\alpha})A_{x,y}\exp\left(-\tilde{A}_{x,y}\right)dy=O (a^2h_n^{2\alpha}).\]
As $\lambda_0=\ep$ on $\mathcal{J}$,   we get by \eqref{eq_definition_A} that 
\[\ep \units{y\geq x}\inf_{y\in \mathcal{J}}\mg(y)\leq \sup_{y\in\mathcal{J}}\A\leq \sup_{y\in \mathcal{J}}\Mg(y)\ep\units{y\geq x}.\]
Moreover, on $\R^+$, $\exp(-\Au)\leq 1$. 
Then by \eqref{eq_bound_B_xy} and \eqref{eq_bound_tilde_Bxy}, we get that \[R_1=  \int_{\mathcal{J}} \left( \left( 1+O(ah_n^{\alpha}\right)\exp\left(-O(ah_n^{\alpha})\right)-1\right)^2 O(1)dy=\int_{\mathcal{J}} O(a^2h_n^{2\alpha}) dy = O(a^2 h_n^{2\alpha}).\]
Therefore $DP=O(a^2h_n^{2\alpha})$ and, by  \eqref{chi2_L2} and \eqref{def_DP}, we get by recurrence
\begin{align*}
E_{\arabic{numero}}&=\int_{(\R^+)^{2(n-1)}}  \frac{(\pj (z_0 ,y_1)...\pj (z_{n-2} ,y_{n-1}))^2}{\po (z_0 ,y_1)...\po (z_{n-2} ,y_{n-1})}Q(y_1,dz_1)\ldots Q(Y_{n-1},dz_{n-1})d y_{1} ...dy_{n-1}\\
&\times\left(\OO{  \cons^2  h_n^{2\alpha}}+1\right)  \\
&=\prod_{i=1}^n\left(\OO{ \cons^2  h_n^{2\alpha }}+1\right)  =1+\cons^2 n\OO{h_n^{2\alpha}}.
\end{align*}
As $h_n= n^{-\frac{1}{2\alpha+1}}$, 
\[\chi^2 (\mathbf{P}^{\lambda_0}, \mathbf{P}^{\lambda_j})=E_{\arabic{numero}}-1\leq \cons^2\OO{n^{1/(2\alpha+1)}}.\]
By \eqref{ln_Pn}, $\ln(P_n)\geq \ln(2) n^{1/(2\alpha+1)}/8$ and therefore,
 \[\frac{1}{P_n}\sum_{k=1}^{P_n}\chi^2 (\mathbf{P}^{\lambda_0}, \mathbf{P}^{\lambda_j})= \cons^2\OO{ n^{1/(2\alpha+1)}}=
a^2\OO{\ln(P_n)}\leq \ln(P_n)/8\] 
for $a$ small enough, which concludes the proof.
\fussy

\section*{Acknowledgements}
N. Krell  was partly supported by the Agence Nationale de la Recherche
PIECE 12-JS01-0006-01. 
The research of E. Schmisser was supported in part by the Labex CEMPI (ANR-11-LABX-0007-01)

\appendix

\section{Technical proofs and results}
\sloppy

\subsection{Proof of Lemma \ref{cor_majoration_variance}}
We consider a function $\ft$ such that $\norm{\ft}_{\infty}=1$; we obtain the expected result by dividing $\ft$ by its $L^{\infty}$-norm. 
According to Assumption A\ref{hypo_contraction},
\[\left\vert \EE[z_0]{\frac{1}{n}\sum_{k=1}^n \ft(Y_k,Z_k)-
\int \ft(y,z)\rhol(dy,dz)}\right\vert 
\leq \frac{1}{n} \sum_{k=1}^n R\Vl(z_0)\gamma^k\leq \frac{R\Vl(z_0)}{n(1-\gamma)} \]
which proves the first inequality. Let us set  $\tilde{\ft}(Y,Z)=\ft(Y,Z)-\EE[z_0]{\ft(Y,Z)} $. We have: 
\begin{equation}
\EE[z_0]{\left(\frac{1}{n}\sum_{k=1}^n\tilde{\ft}(Y_k,Z_k)\right)^2}
=\frac{1}{n^2}\sum_{k} \EE[z_0]{\tilde{\ft}^2(Y_k,Z_k)}+\frac{2}{n^2}\sum_{k<k'} \EE[z_0]{\tilde{\ft}(Y_k,Z_k)\tilde{\ft}(Y_{k'},Z_{k'})}.
\label{eq:cor_majoration_variance_decomposition_variance}
\end{equation}
 We notice that: 
\begin{align*}
\EE[z_0]{ \tilde{\ft}^2(Y_k,Z_k) }
&= \EE[z_0]{ \ft^2(Y_k,Z_k)}-\left(\EE[z_0]{ \ft(Y_k,Z_k)}\right)^{2}\leq   \EE[z_0]{ \ft^2(Y_k,Z_k)}\\ 
&\leq \int \ft^2(y,z)\rhol(dy,dz)+\EE[z_0]{ \ft^2(Y_k,Z_k)}-\int \ft^2(y,z)\rhol(dy,dz)\\
&\leq \int \ft^2(y,z)\rhol(dy,dz)+ R\Vl(z_0)\gamma^k 
\end{align*}
by Assumption A\ref{hypo_contraction}.
Therefore
\begin{equation}
\EE[z_0]{\frac{1}{n^2}\sum_{k=1}^n \tilde{\ft}^2(Y_k,Z_k)}
\leq \frac{1}{n} \int \ft^2(y,z)\rhol(dy,dz)
+ \frac{R\Vl(z_0)}{n^2(1-\gamma)}.
\label{eq:cor_majoration_variance_maj_t2}
\end{equation}
Let us bound the last term of \eqref{eq:cor_majoration_variance_decomposition_variance}. We can remark that $(Z_0,Y_1,Z_1,\ldots,Y_k,Z_k,\ldots)$ is an inhomogeneous Markov chain. Therefore, for any $(k<k')$, $\Ec{s(Y_{k'},Z_{k'})}{Y_k,Z_k}=\Ec{s(Y_{k'},Z_{k'})}{Z_k}$  and by Assumption A\ref{hypo_contraction},
\begin{align}\left\vert \Ec{\tilde{\ft}(Y_{k'},Z_{k'})}{ Y_k,Z_k} \right\vert 
&\leq  \left\vert \Ec{ \ft(Y_{k'},Z_{k'})}{Y_k,Z_k}-\int \ft(y,z)\rhol(dy,dz)\right\vert \nonumber\\
&+\left\vert -\EE[z_0]{\ft(Y_{k'},Z_{k'})}+\int \ft(y,z)\rhol(dy,dz) \right\vert \nonumber \\
&\leq 
 R\gamma^{k'-k}\Vl(Z_k)+ R\Vl(z_0)\gamma^{k'}\label{eq:bound_E_Zkl} .
\end{align}
Then 
\begin{align*}
E_k&:=\sum_{k'=k+1}^n \vert  \EE[z_0]{\tilde{\ft}(Y_k,Z_k)\tilde{\ft}(Y_{k'},Z_{k'})}\vert =
\sum_{k'=k+1}^n \vert \EE[z_0]{\tilde{\ft}(Y_k,Z_k)\Ec{\tilde{\ft}(Y_{k'},Z_{k'})}{Y_k,Z_k}}\vert \\
&\leq \sum_{k'=k+1}^{n} \EE[z_0]{\vert \tilde{\ft}(Y_k,Z_k)\vert (R\gamma^{k'-k}\Vl(Z_k)+R\Vl(z_0)\gamma^{k'})} \\
&\leq \frac{R}{1-\gamma}  \EE[z_0]{\vert \tilde{\ft}(Y_k,Z_k)\vert \left(\Vl(Z_k)+  \gamma^k\Vl(z_0)\right)}. 
\end{align*}
As $\vert \tilde{\ft}(Y_k,Z_k)\vert \leq \vert \ft(Y_k,Z_k)\vert + \EE[z_0]{\vert \ft(Y_k,Z_k)\vert }$, 
\begin{align*}
E_k&\leq \frac{R}{1-\gamma}   \left(\EE[z_0]{\vert \ft(Y_k,Z_k)\vert \Vl(Z_k)}
+ \EE[z_0]{ \vert \ft(Y_k,Z_k)\vert } \EE[z_0]{\Vl(Z_k)} \right)\\
&+  \frac{R\Vl(z_0)}{1-\gamma}\gamma^k2\EE[z_0]{\vert \ft(Y_k,Z_k)\vert }.
\end{align*}
By Assumption A\ref{hypo_contraction}, for any function $\vert\psi\vert\leq \Vl$, \[ \EE[z_0]{\psi(Y_k,Z_k)}\leq \ \int_0^{\infty} \psi(y,z) \rhol(dy,dz) + R\Vl(z_0)\gamma^k.\] Then 
\begin{align*}
\sum_{k} E_k&\leq
\sum_k \frac{R}{1-\gamma} 
\left(\int \vert \ft(y,z)\vert \Vl(z)\rhol(dy,dz) 
+ R\Vl(z_0)\gamma^k\right)\\
&+ \sum_k \frac{R}{1-\gamma}
\left(
 \int \vert \ft(y,z)\vert \rhol(dy,dz)
+R\Vl(z_0)\gamma^k
\right)\\
&\times 
\left(
\int \Vl(z)\rhol(dy,dz)+R\Vl(z_0)\gamma^k
\right)\\
&+ \sum_k 
\frac{R\Vl(z_0)}{1-\gamma} 2\gamma^k
\left( 
\int \vert \ft(y,z)\vert \rhol(dy,dz)+R\Vl(z_0)\gamma^k
\right)\\
&\leq n \frac{R}{1-\gamma}
 \left( 
 \int \vert \ft(y,z)\vert \Vl(z)\rhol(dy,dz) + 
 \int \vert  \ft(y,z)\vert \rhol(dy,dz)\int \Vl(z)\mu(dz) \right)\\
& + \frac{R\Vl(z_0)}{(1-\gamma)^2}\left( R+R\int \Vl(z)\mul(dz)+(R+2)\int \vert \ft(y,z)\vert \rhol(dy,dz) \right)\\
&+ \frac{R^2(R+2)\Vl^2(z_0)}{(1-\gamma)(1-\gamma^2)} .
\end{align*}
By \eqref{eq:cor_majoration_variance_decomposition_variance} and \eqref{eq:cor_majoration_variance_maj_t2}, we get: 
\begin{align*}
\EE[z_0]{
\left (
\frac{1}{n}\sum_{k=1}^n \tilde{\ft}(Y_k,Z_k)
\right)^2}
 &\leq 
 \frac{1}{n}\left(
 \int \ft^2(y,z)\rhol(dy,dz)+ 
 \int \vert \ft(y,z)\vert G_{\lambda}(y,z)\rhol(dy,dz)\right)\\
 &+\frac{C}{n^2}
\end{align*}
 where $C$ depends only on $R$, $\Vl$ and $\gamma$ and we recall that  $G_{\lambda}(z)=\frac{R}{1-\gamma} \left(\Vl(z)+\int \Vl(u)\mu(du)\right)$. 
 
 \subsection{Proof of Lemma \ref{cor_betageometricmixing}}

Let $G$ be an event of $\rond{O}_{0}^k\times \rond{O}_{t+k}^{\infty}$. 
Then $G$ is a disjoint reunion of events $E^i\cap F^{i,j}$ where 
\begin{align*}
E^i&=\{(Y_1,Z_1)\in J_1^i,\ldots,(Y_k,Z_k)\in J_k^i\},\\
F^{i,j}&=\{(Y_{t+k},Z_{t+k})\in I_0^{i,j},\ldots,(Y_{t+k+\n},Z_{t+k+\n})\in I_{\n}^{i,j}\}
\end{align*}
with $J_j^i$ and $I_l^{i,j}$ subsets of $(\R^+)^2$ and $1\leq \n<\infty$. 
Then 
\[D_G:=P_{ \rond{O}_{0}^k,\rond{O}_{t+k}^{\infty}}(G)-P_{ \rond{O}_{0}^k}\otimes P_{\rond{O}_{t+k}^{\infty}}(G)=\sum_{i,j} \PP{E^i\cap F^{i,j}}-\PP{E^i}\PP{F^{i,j}}.\] 
As $(Y_k,Z_k)_{k\in\N}$ is a Markov chain, 
\begin{align*}
A_{i,j}&:=\PP{E^i\cap F^{i,j}}-\PP{E^i}\PP{F^{i,j}}\\
&=\int_{J_1^i\times\ldots\times J_k^i} \pl(z_0,dy_1)Q(y_1,dz_1)\ldots \pl(z_{k-1},dy_k)Q(y_k,dz_k)\\
&\times \int_{I_0^{i,j}} \left(\Pc{Y_{t+k}\in dy'_0,Z_{t+k}\in dz'_0}{Z_k= z_k}-\Pc{Y_{t+k}\in dy'_0, Z_{t+k}\in dz'_0}{Z_0=z_0}\right)  \\
&\times \int_{I_1^{i,j}\times\ldots\times I_{\n}^{i,j}} \pl(z'_0,dy'_1)Q(y'_1,dz'_1)\ldots \pl(z'_{\n-1},dy'_{\n})Q(y'_{\n},dz'_{\n}).
\end{align*}
To simplify the notations, let us set 
\[
    \rl[k](x,dy_1,dz_1,\ldots,dy_k,dz_k):=\pl(x,dy_1)Q(y_1,dz)\ldots \pl(z_{k-1},y_k)Q(y_k,z_k).
\]
and $\rl^t(x,dy,dz)=\Pc{Y_{t}\in dy,Z_t\in dz}{Z_0=x}$. 
Then 
\begin{align*}
A_{i,j}
&=\int_{J_1^i\times\ldots\times J_k^i} \rl[k](z_0,dy_1,dz_1,\ldots,dy_k,dz_k)\\
&\times \int_{I_0^{i,j}} \left(\rl^t(z_k,dy'_0,dz'_0)-\rl^{t+k}(z_0,dy'_0,dz'_0)\right) \nbOne_{y'_0,z'_0\in I_0^{i,j}} \\
&\times \int_{I_1^{i,j}\times\ldots\times I_{\n}^{i,j}} \rl[\n](z'_0,dy'_1,dz'_1,\ldots,dy'_{\n},dz'_{\n}).
\end{align*}
We regroup the $F^{i,j}$: 
\begin{align*}
A_i&:=\sum_j A_{i,j}=\sum_j\PP{E^i\cap F^{i,j}}-\PP{E^i}\PP{F^{i,j}}\\&=
\int_{J_1^i\times\ldots\times J_k^i} \rl[k](z_0,dy_1,dz_1,\ldots,dy_k,dz_k)
\left(\EE[z_k]{\psi(Y_{t},Z_{t})}-\EE[z_0]{\psi(Y_{t+k},Z_{t+k})}\right)
\end{align*}
where $\psi(y,z):=\sum_j \nbOne_{(y,z)\in I_0^{i,j}} 
\int_{I_1^{i,j}\times\ldots\times I_{\n}^{i,j}} \rl[\n](z,dy'_1,dz'_1,\ldots,dy'_{\n},dz'_{\n})
$.
We can remark that 
$\psi(y,z)= \sum_j \nbOne_{y,z\in I_0^{i,j}} \PP[z]{(Y_1,Z_1)\in I_1^{i,j},\ldots,(Y_{\n},Z_{\n})\in I_{\n}^{i,j}}$
and by the law of total probability, $\psi(y,z)\leq 1$. We can apply Assumption A\ref{hypo_contraction} to the function $\psi$:
\[
\left\vert \EE[z_k]{\psi(Y_{t},Z_{t})}-\EE[z_0]{\psi(Y_{t+k},Z_{t+k})} \right\vert\leq  R\gamma^{t} \Vl(z_k)+R\gamma^{t+k} \Vl(z_0).
\]
 Then 
\begin{align*}
\vert D_G\vert &=\left\vert \sum_i A_i\right\vert \leq R \gamma^t \sum_i \int_{J_1^i\times\ldots\times J_k^i} \rl[k](z_0,dy_1,dz_1,\ldots,dy_k,dz_k) \left( \Vl(z_k)+\Vl(z_0)\right)\\
&\leq R\gamma^t \sum_i \EE[z_0]{\left(\Vl(Z_k)+\Vl(z_0)\right)\nbOne_{(Y_1,Z_1)\in J_1^i,\ldots,(Y_k,Z_k)\in J_k^i}}\\
&\leq R\gamma^t \left(\EE[z_0]{\Vl(Z_k)}+\Vl(z_0)\right).
\end{align*}
By Lemma \ref{cor_majoration_variance}, 
\[\EE[z_0]{\Vl(Z_k)}\leq \int \Vl(z)\mul(dz)+R\gamma^k \Vl(z_0).\]
Therefore
\[\beta_{Y,Z}(t)=\sup_k\sup_{G\in\rond{O}_0^k\times\rond{O}_{t+k}^{\infty}} \vert D_G\vert \leq R\gamma^t\left(\int \Vl(z)\mul(dz)+ (1+R)\Vl(z_0)\right).\]
As $\gamma<1$,
\[ \beta_{Y,Z}(t)\leq ce^{-\beta t}\]
with $\beta=-\ln(\gamma)$, $c=R\left( \int \Vl(z)\mul(dz)+ (1+R)\Vl(z_0)\right)$.

\subsection{Proof of Lemma \ref{hypo}}

\subsubsection{Assumption A\ref{hypo_contraction} is satisfied}

Assumption (S)\ref{hypo_lambda_borne} implies Assumption A\ref{hypo_contraction}\ref{hypo_contraction_pl_densite}. To prove Assumption A\ref{hypo_contraction}\ref{hypo_contraction_ergodicite} and \ref{hypo_contraction_V}, in analogy with \cite{kre2}, we apply the following result, 
which is Theorem 1.1 of \citet{baxendale} written for a Markov chain on $\R^2$ and a finite measure instead of a probability. 
\begin{result}[Sufficient conditions for ergodicity]
Let us consider $(Y_k,Z_k)_{k\geq 1}$  an homogeneous Markov chain on $(\R^2,\rond{B}(\R^2))$ with transition probability $\tilde{R}$. Under the following three conditions,
\begin{description}
\item[Minorization condition] There exist a set $\rond{C}\subset\R^2$ and a finite measure $\mathbf{s}$ such that $\forall (y_1,z_1)\in \rond{C},\forall A\in \rond{B}(\R^2)$, \[\int_A \tilde{R}(y_1,z_1,dy,dz)\geq \mathbf{s}(A). \]
\item[Strong aperiodicity condition] $\mathbf{s}(\rond{C})>0$. 
\item[Drift condition] There exists a function $\mathbf{V}:\R^2\to [1,\infty[$ and two constants $c<1$, $K>0$ such that 
\[ \EE[y_1,z_1]{\mathbf{V}(Y_2,Z_2)}\leq c \mathbf{V}(y_1,z_1)\units{(y_1,z_1)\in \rond{C}^c}+K\units{(y_1,z_1)\in\rond{C}}.\]
\end{description}
 Then the process $\{(Y_k,Z_k) : k \geq 0\}$ is recurrent positive and strongly ergodic, and has a unique stationary
probability measure $\rho$. 

Moreover, there exists $\gamma$ and $R$ depending only on $\mathbf{s}$, $c$ and $K$ such that, for any function $\psi\leq \mathbf{V}$,  
\[\left\vert \Ec{\psi(Y_k,Z_k)}{Z_0=z_{0}}-\EE[\rhol] {\psi(Y_1,Z_1)}\right\vert \leq R\mathbf{V}(z_{0})\gamma^k.\]
\end{result}

Then Assumptions A\ref{hypo_contraction}\ref{hypo_contraction_ergodicite} and \ref{hypo_contraction_V} are satisfied.

Let us check that its three conditions (minorization, strong aperiodicity and drift) are satisfied.
We need to control the transition density. As $(Z_0,Y_1,Z_1,\ldots,)$ is an (inhomogeneous) Markov chain, let us note \[ \rtl(y_1,z_1,dy,dz)=\Pc{Y_2\in dy,Z_2\in dz}{Z_1=z_1,Y_1=y_1}=\pl(z_1,dy)Q(y,dz).\]
Let us set 
\[i'_1=\max\left( i_1,\left( \frac{1}{\mathbf{a}(1-\kappa^{b+1})}\ln\left( \frac{2\kappa^{b+1}}{1-\kappa^{b+1}}\right)\right)^{1/(b+1)}\nbOne_{\{\kappa^{b+1}\geq 1/3\}}\right).\]
\begin{description}
\item[Minorization condition] Let us set $\rond{C}=\R^+\times [0,i'_1]$.    
For any $(y_1,z_1)\in\rond{C}$, any $A\subseteq(\R^+)^2$, by 
Assumption (S)\ref{hypo_phi_borne}, we have that 
\begin{align*}
\int_{A}\rtl(y_1,z_1,dy,dz)&=\int_A \lambda(y)(\phi_{z_1}^{-1})'(y)\exp\left(-\int_{z_1}^y \lambda(u)(\phi_{z_1}^{-1})'(u)du\right)\nbOne_{\{z_1\leq y\}} Q(y,dz) dy\\
&\geq \int_A \lambda(y)\mg(y)\exp\left(-\int_0^y\lambda(u)\Mg(u)du\right)\nbOne_{\{i'_1\leq y\}}Q(y,dz)dy
\end{align*}

By 
Assumption (S)\ref{hypo_lambda_borne} and \ref{hypo_lambda_minore},  for any $\lambda\in\Fcb$,
\begin{align*}
\int_{A}\rtl(y_1,z_1,dy,dz)
&\geq \int_{A\cap [i'_1,i'_2]\times[0,i'_1]} \lambda(y)\mg(y)\exp\left(-\int_0^y\lambda(u)\Mg(u)du\right)Q(y,dz)dy\\
    & \geq \int_{A\cap [i'_1,i'_2]\times[0,i'_1]} \mathbf{a} \frac{y^b}{b+1}\exp\left(-\llg -\Lg \int_{i_1}^{y} \Mg(u)du\right)Q(y,dz)dy\\
    &=:\mathbf{s}_{\cf}(A)
\end{align*}
and $\mathbf{s}_{\cf}(A)$ is a finite measure. 
\item[Strong aperiodicity condition] 
\begin{align*}
    \mathbf{s}_{\cf}(\rond{C})&=\int_{i'_1}^{i'_2} \mathbf{a} \frac{y^b}{b+1} \exp\left(-\llg-\Lg\int_{i_1}^y\Mg(u)du\right)\int_0^{i'_1}Q(y,dz)dy\\
    &\geq \int_{i'_1}^{(\kappa^{-1}i'_1)\wedge i'_2}\mathbf{a} \frac{y^b}{b+1} \exp\left(-\llg-\Lg \int_{i_1}^{i'_2}\Mg(u)du\right)\int_0^{i'_1}Q(y,dz)dy.
\end{align*}
For any $y\leq \kappa^{-1}i'_1$, by Assumption (S) \ref{hypo_Q}, $\int_0^{i'_1}Q(y,dz)=\Pc{Z_1\leq i'_1}{Y_1=y}=1$. Therefore 
\[\mathbf{s}_{\cf}(\rond{C})\geq \int_{i'_1}^{(\kappa^{-1}i'_1)\wedge i'_2} \mathbf{a} \frac{y^b}{b+1}\exp\left(-\llg -\Lg\int_{i_1}^{i'_2}\Mg(u)du\right).\]
Then 
$\mathbf{s}_{\cf}(\rond{C})>0$.

\item[Drift condition] 
For any $(y_1,z_1)$, as $(Y_1,Z_1,Y_2,Z_2)$ is an (inhomogeneous) Markov chain, \begin{align*}\Ec{\Vb(Z_2)}{Y_1=y_1,Z_1=z_1}&=\EE[z_1]{\Vb(Z_2)}\\
&=\int_0^{\infty} \Pc{\Vb(Z_2)> z}{Z_1=z_1}dz
\end{align*}
where $\Vb(z)=\exp\left(\mathbf{a}z^{b+1}\right)$. By Assumption (S)\ref{hypo_Q}, as $\Vb$ is an increasing function, 
$\Vb(Z_2)>z \iff Z_2\geq \Vb^{-1}(z) \implies Y_2\geq \kappa^{-1} \Vb^{-1}(z)$.
Then by \eqref{fonction_repartition_t1} and Assumption (S)\ref{hypo_phi_borne}, 
\begin{align}
    \EE[z_1]{\Vb(Z_2)}&\leq \int_{0}^{\infty} \PP[z_1]{Y_2\geq \kappa^{-1}\Vb^{-1}(z)}dz \nonumber\\
    &= \int_0^{\infty} \exp\left(-\int_{z_1}^{\kappa^{-1}\Vb^{-1}(z)} (\phi_{z_1}^{-1})'(u)\lambda(u)du\right) \nbOne_{\{\kappa^{-1}\Vb^{-1}(z)\geq z_1\}}dz\nonumber\\
    &\leq \int_0^{\infty} \exp\left(-\int_{z_1}^{\kappa^{-1}\Vb^{-1}(z)} \mg(u)\lambda(u)du\right) \nbOne_{\{\kappa^{-1}\Vb^{-1}(z)\geq z_1\}}dz=:\tilde{I}(z_1)\label{eq_bound_vb}
    \end{align}
    Let us make the change of variable $y=\kappa^{-1}\Vb^{-1}(z)$, then $dz=\kappa \Vb'(\kappa y)dy$ and
    \[\tilde{I}(z_1)= \kappa \int_0^{\infty} \Vb'(\kappa y) \exp\left(-\int_{z_1}^{y} \mg(u)\lambda(u)du\right) \nbOne_{\{y\geq z_1\}}dy.\]
    Let us first bound this quantity for $z_1\geq i_1$. By Assumption (S)\ref{hypo_lambda_minore}, for any $z_1\geq i_1$, 
    \[\int_{z_1}^{y}\mg(u)\lambda(u)du\geq \mathbf{a}(y^{b+1}-z_1^{b+1}).\] As $\Vb(y)=\exp\left(\mathbf{a}y^{b+1}\right)$, $\Vb'(\kappa y)=\mathbf{a} (b+1)\kappa^b y^b\exp\left(\mathbf{a} \kappa^{b+1} y^{b+1}\right)$ and, for any $z_1\geq i_1$, 
\begin{align}
\tilde{I}(z_1)&\leq \int_{z_1}^{\infty} \mathbf{a}(b+1)\kappa^{b+1} y^b\exp\left( -\mathbf{a} y^{b+1}(1-\kappa^{b+1})+\mathbf{a}z_1^{b+1})\right) dy\nonumber\\
&\leq \Vb(z_1)\frac{\kappa^{b+1}}{1-\kappa^{b+1}} \left[- \exp\left(-\mathbf{a} (1-\kappa^{b+1})y^{b+1}\right)\right]_{z_1}^{\infty}\nonumber\\
&\leq \frac{\kappa^{b+1}}{1-\kappa^{b+1}}\Vb(z_1)\Vb(z_1)^{\kappa^{b+1}-1} \label{eq_bound_vb_fin}. 
\end{align}
We have that  \[(i'_1)^{b+1}\geq \frac{1}{\mathbf{a}(1-\kappa^{b+1})}\ln\left(\frac{2\kappa^{b+1}}{1-\kappa^{b+1}}\right)\quad\text{then}\quad 
\left(\Vb(i'_1)\right)^{1-\kappa^{b+1}}\geq  \frac{2\kappa^{b+1}}{1-\kappa^{b+1}}.\]  Therefore, for any $z_1\geq i'_1$, as $\Vb$ is an increasing function, 
\[\Vb(z_1)^{\kappa^{b+1}-1}\leq \Vb(i'_1)^{\kappa^{b+1}-1}\leq \frac{1-\kappa^{b+1}}{2\kappa^{b+1}}.\]
Then 
\begin{equation}\tilde{I}(z_1)\leq \frac{\Vb(z_1)}{2}. 
\end{equation}
Moreover, by \eqref{eq_bound_vb},
\begin{align*}
    \sup_{z_1\leq i_1}\tilde{I}(z_1)
    &\leq \int_0^{\Vb(\kappa i_1)} \exp\left(-\int_0^{\kappa^{-1}\Vb^{-1}(z)} \mg(u)\lambda(u)du\right)dz
    \\
    &+ \int_{\Vb(\kappa i_1)}^{\infty}
    \exp\left(-\int_{0}^{i_1} \mg(u)\lambda(u)du-\int_{i_1}^{\kappa^{-1}\Vb^{-1}(z) }\mg(u)\lambda(u)du \right)
    dz\\
    &\leq  \Vb(\kappa i_1)+ \int_0^{\infty}\exp\left(-\int_{i_1}^{\kappa^{-1}\Vb^{-1}(z)}\mg(u)\lambda(u)du\right)dz\nbOne_{\{\kappa^{-1}\Vb^{-1}(z)\geq i_1\}}\\
    &\leq \Vb(\kappa i_1)+\tilde{I}(i_1)
    \end{align*}
    and by \eqref{eq_bound_vb_fin}, \[\sup_{z_1\leq i'_1} \tilde{I}(z_1)\leq  \Vb(i_1)+ \frac{\kappa^{b+1}}{1-\kappa^{b+1}}\Vb(i'_1)\leq C\Vb(i'_1)<\infty\]

Therefore the three conditions (minorization, strong aperiodicity and drift) are satisfied,  which gives Assumption A\ref{hypo_contraction}. 
\end{description}

\subsubsection{Assumption A\ref{hypo_D_positif} is satisfied}
It remains to prove that Assumption A\ref{hypo_D_positif} is satisfied. 
We recall that \[\Phi_0=\inf_{x\in[0,i_2],y\in\mathcal{I}}(\phi_x^{-1})'(y)\quad \text{and}\quad \Phi_1=\sup_{x\in[0,i_2],y\in\mathcal{I}} (\phi_x^{-1})'(y).\] 
By equation \eqref{def_D}, for any $y\in\mathcal{I}$,
\[\Dg(y)\geq \Phi_0\inf_{ y'\in\mathcal{I}} \int_0^{y'} \Pc{Y_1> y'}{Z_0=x}\mul(dx).\]
By equation \eqref{fonction_repartition_t1},  Assumption (S)\ref{hypo_phi_borne} and \ref{hypo_lambda_borne}, for any $y\in\mathcal{I}$,
\[\Pc{Z_1> y}{Z_0=x}\geq  \exp\left(-\llg-\int_{i_1}^{i_2}\Lg(\phi_x^{-1})'(u)du\right)\geq e^{-\llg-\Lg \Phi_1 i_2}.\]
Then 
\[\inf_{\lambda\in\Fcb} \inf_{y\in\mathcal{I}}\Dg(y)\geq \Phi_0 e^{-\llg-\Lg\Phi_1 i_2}\mul([0,i_1]).\]
It remains to bound $\mul([0,i_1])$ away from 0. 

As $\mul$ is the stationary density of $(Z_k)$, 
$\mul(]z,\infty])=\PP[\mul]{Z_1> z}$.
Therefore, by Markov inequality, as $\Vb$ is an increasing function, 
\[
\mul(]z,\infty[)=\PP[\mul]{\Vb(Z_1)> \Vb(z)}\leq \Vb^{-1}(z)\EE[\mul]{\Vb(Z_1)}.\]
By Lemma \ref{hypo}\ref{Vc}, \[\sup_{\lambda\in\Fcb} \EE[\mul]{\Vb(Z_1)}\leq \Vb(z_0)(1+\gamma R)<\infty.\] 
As   $\sup_{\lambda\in \Fcb}\EE[\mul]{\Vb(Z_1)}<\infty$, and  $\Vb$ is an increasing function,  there exists $y_0>0$, $\sup_{\lambda\in\Fcb}\mul(]y_0,\infty[)<1$ and consequently, $\inf_{\lambda\in\Fcb}\mul([0,y_0])>0$. 
Let us consider the sequence
\[(z_0:=i_1,z_1:=z_0/\sqrt{\kappa},\ldots,z_j:=z_{j-1}/\sqrt{\kappa}=\kappa^{-j/2}i_1,\ldots,z_{k_n}:=\kappa^{-k_n/2}i_1)\] where $z_{k_{n}-1}<y_0\leq z_{k_n}$. We can remark that
\begin{equation}
\inf_{\lambda\in\Fcb} \mul([0,z_{k_n}])>0. \label{eq_nul_kn}
\end{equation}
As $\mul$ is the stationary density, for any $z>0$,
\[
\mul([0,z])= \int_0^{\infty}\Pc{Z_1\leq z}{ Z_0= x} \mul(dx).
\]
As $\PP{Z_1\leq \kappa Y_1}=1$, $\Pc{Z_1\leq z}{Z_0=x}\geq \Pc{Y_1\leq \kappa^{-1}z}{Z_0=x}$ 
and by \eqref{fonction_repartition_t1},
\begin{align*}
    \mul([0,z_{j}])&\geq  \int_0^{\infty}\left(1- \exp\left(-\int_{x}^{\kappa^{-1}z_j} \lambda(u)(\phi_x^{-1})'(u)du\right)\units{x\leq \kappa^{-1}z_j}\right)\mul(dx)\\
    &\geq \int_{i_1}^{z_{j+1}}   \left(1- \exp\left(-\int_{x}^{\kappa^{-1}z_j} \lambda(u)(\phi_x^{-1})'(u)du\right)\right)\mul(dx)
\end{align*}
as $\kappa^{-1}z_j\geq z_{j+1}$. By Assumption (S)\ref{hypo_phi_borne} and \ref{hypo_lambda_minore}, $\lambda$  and $(\phi_x^{-1})'$ are bounded by below and there exists a constant $\eta$ such that 
\[\inf_{\lambda\in\Fcb}\inf_{u\in[i_1,z_{k_n}]} \lambda(u)(\phi_x^{-1})'(u)\geq \eta. \]
Therefore, as $\kappa^{-1}z_j=\kappa^{-1/2}z_{j+1}$,
\begin{align*}
\mul([0,z_{j}])
&\geq \int_{i_1}^{z_{j+1}} \left(1-\exp(-\eta(\kappa^{-1}z_{j}-z_{j+1})\right)\mul(dx)\\
&\geq \left(1-\exp(-\eta(\kappa^{-1}z_{j}(1-\sqrt{\kappa}))\right)\mul([i_1,z_{j+1}]).
\end{align*}
Let us set $c_j=\left(1-\exp(-\eta(\kappa^{-1}z_{j}(1-\sqrt{\kappa}))\right)$. We can note that 
\[\mul([0,z_{j}])\geq c_j\left(\mul([0,z_{j+1}])-\mul([0,i_1])\right)\]
and in particular,
$\mul([0,i_1])(1+c_0)\geq c_0\mul([0,z_1]].$
By recurrence, we obtain: 
\[ \left(1+ \sum_{j=0}^{k_n-1} \prod_{i=0}^j  c_{i} \right) \mul([0,i_1])\geq \left(\prod_{j=0}^{k_n-1} c_j\right) \mul([0,z_{k_n}]).\]
Then by \eqref{eq_nul_kn} \[\inf_{\lambda\in\Fcb} \mul([0,i_1])>0\]
which concludes the proof.

\subsection{Besov and H\"older spaces}\label{beshol}
\begin{definition}[Modulus of continuity]
The modulus of continuity is defined by 
\[\omega(f,t)=\sup_{\vert x-y\vert \leq t} \vert f(x)-f(y)\vert. \]
\end{definition} 
If $f$ is Lipschitz, the modulus of continuity is proportional to $t$. If $\omega(f,t)=o(t)$, then $f$ is constant: the modulus of continuity can not measure higher smoothness. 
\begin{definition}[Modulus of smoothness]
If $f$ is a function on $\mathcal{A}$, we define its modulus of smoothness by 
\[\omega_{\rg}(f,t)_p=\sup_{0<h\leq t} \norm{\Delta_h^{\rg}(f,.)}_{L^p(\mathcal{A})}\quad\textrm{where}\quad \Delta_h^{\rg}(f,x)=\sum_{k=0}^{\rg} (-1)^k C_{\rg}^k f(x+kh).\]
\end{definition}
We can remark that if $f$ is $C^{\rg}$, then \[\lim_{t\rightarrow 0}t^{-{\rg}}\omega_{\rg}(f,t)_{p}=\norm{f^{(\rg)}}_{L^p(\mathcal{A})}\quad \text{and}\quad 
\lim_{t\rightarrow 0} t^{-\alpha}\omega_{\rg+1}(f,t)_p= \lim_{t\rightarrow 0} t^{-\alpha+\rg} \omega(f^{(\rg)},t)_p.\] 
In particular, if  $f\in C^r(\mathcal{A})$ with $\mathcal{A}$ compact and if $f^{({\rg}+1)}$ is Lipschitz, then $\omega_{\rg+1}(f,t)_{p}=O(t^{\rg+1})$. 
If  $f^{(\rg)}$ is $(\alpha-\rg)$-Hölder-continuous, that is if $\forall x,y\in\mathcal{A}$, $|f^{(\rg)}(x)-f^{(\rg)}(y)|\leq C|x-y|^{\alpha-\rg}$, then \[\omega_{\rg+1}(f,t)_p=O(t^{\alpha}).\] 
If $f^{(\rg)}$ is piecewise-continuous and $(\alpha-\rg)$-Hölder on the points of continuity, then \[\omega_{\rg+1}(f,t)_p=O(t^{1/p}+t^{\alpha}).\]

The modulus of continuity and the modulus of smoothness are sub-linears: 
\[\omega_{\rg}(f+g,t)_p\leq \omega_{\rg}(f,t)_p+\omega_{\rg}(g,t)_p\quad \textrm{and}\quad \omega_{\rg}(af,t)_p=a\omega_{\rg}(f,t)_p.\]

\begin{definition}[Besov space]
The Besov space $\Besov_{2,\infty}^{\alpha}(\mathcal{A})$ is the set of functions:
\[\Besov_{2,\infty}^{\alpha}(\mathcal{A})=\{f\in L^2(\mathcal{A}),\; \sup_{t>0} t^{-\alpha}\omega_{\rg+1}(f,t)_2<\infty\}\]
where $\rg=\lfloor\alpha\rfloor$. 
The norm  is defined by: 
$\norm{f}_{B_{2,\infty}^{\alpha}}:=\sup_{t>0}t^{-\alpha}\omega_{\rg+1}(f,t)_2 + \norm{f}_{L^2(\mathcal{A}}$.
We denote
	$\rond{B}_{2,\infty}^{\alpha}(\mathcal{A},M_1)=\{f\in\rond{B}_{2,\infty}^{\alpha}(\mathcal{A}),\norm{f}_{B_{2,\infty}^{\alpha}(\mathcal{A})}\leq M_1\}	$.
\end{definition}

See \citet{devorelorentz} and \citet{meyer} for more details. 
We use the Besov space to control the risk of the estimator of the stationary density $\nul$. 

\begin{definition}[H\"older space]
The H\"older space is the set of functions: 
\[ H^{\alpha}(\mathcal{A})=\{f\in \rond{C}^{\rg}(\mathcal{A}), t^{\rg-\alpha}\omega(f^{(\rg)},t)_{\infty}<\infty\:\; \forall \;\: t>0\}\]
where $\rg=\lfloor\alpha\rfloor$. We note  
$\vert f\vert _{H^{\alpha}(\mathcal{A})}:=\sup_{t>0}t^{\rg-\alpha}\omega(f^{(\rg)},t)_{\infty}
$
and define the norm of the H\"older space $\norm{f}_{H^{\alpha}(\mathcal{A})}=\vert f\vert_{H^{\alpha}(\mathcal{A})}+\norm{f}_{L^{\infty}(\mathcal{A})}$
and 
$H^{\alpha}(\mathcal{A},M_1)=\{f\in H^{\alpha}(\mathcal{A}),\norm{f}_{H^{\alpha}(\mathcal{A})}\leq M_1\}$.
\end{definition}
As noted before, $t^{\rg-\alpha}\omega(f^{(\rg)},t)_{\infty}=t^{-\alpha}\omega_{\rg}(f,t)_{\infty}$: the H\"older space $H^{\alpha}(\mathcal{A})$ is included in $\Besov_{\infty,\infty}^{\alpha}(\mathcal{A})$ which itself is included in $\Besov_{2,\infty}^{\alpha}(\mathcal{A})$.  
We can remark that if a function is $C^{\rg}$ and piecewise $C^{\rg+1}$, it belongs to $\Besov_{2,\infty}^{\rg+1/2}$ but only to $H^{\rg}$.

\subsection{Proof of Lemma \ref{lem_regularite_nu}}

As $\numes$ is the stationary distribution of $(Y_k)$, by \eqref{eq_defi_mes_stat} and \eqref{densitet1}, we have, for all $y\in\mathcal{J}$: 
\begin{align*}
    \nul(y)&=\int_{\R^+\times \R^+} \nul(x)Q(x,dz)\pl(z,y)dx\\
    &=   \int_{\R^+} \nul(x)\int_0^y Q(x,dz)\lambda(y)(\phi_{z}^{-1})'(y)e^{-\int_{z}^{y} \lambda(u)(\phi_{z}^{-1})'(u)du} dx\\
    &=\int_{\R^+} \nul(x)\int_0^y Q(x,dz)\Lambda(z,y)dx
\end{align*}
with \[
     \Lambda(z,y)= \lambda(y)(\phi_{z}^{-1})'(y)e^{-\int_{z}^{y} \lambda(u)(\phi_{z}^{-1})'(u)du}.\]
 As  the Hölder spaces are stables by multiplication, composition and integration, $\Lambda$ has the same regularity than $\lambda$ and  $(\phi_x^{-1})'$. We have that 
\begin{equation*}
    \begin{split}
    \nul(y+h)-\nul(y)=\int_{\R^+} \int_0^{y+h} \nul(x)Q(x,dz)(\Lambda(z,y+h)-\Lambda(z,y))dx \\+\int_{\R^+}\int_y^{y+h}\nul(x)Q(x,dz)\Lambda(z,y)dx.
    \end{split}
    \end{equation*}
 Let us set 
$\tq[g](y)=\int_{\R^+}\int_0^y g(x)Q(x,dz)dx.
$
If $\tq[\nul]$ is differentiable, we get:
\[\nul'(y)= \int_{\R^+} \nul(x)\int_0^y Q(x,dz)\frac{\partial \Lambda}{\partial y}(z,y)dx + \Lambda(y,y)\tq[\nul]'(y)\]
and if $\tq[\nul]$ belongs to $C^{\rg}$, there exist $(c_{k_1,k_2})_{k_1+k_2\leq \rg-1}\in \mathbb{R}$ such that : 
\[\nul^{(\rg)}(y)=\int_{\R^+}\nul(x)\int_0^y Q(x,dz)\frac{\partial^k\Lambda}{\partial y^k}(z,y)dx+\sum_{k_1+k_2\leq \rg-1}c_{k_1,k_2}\frac{\partial^{k_1+k_2}\Lambda}{\partial y^{k_1}z^{k_2}}(y,y)\tq[\nul]^{(\rg-k_1-k_2)}(y).\]
It remains to study the regularity of the function $\tq[\nul]$. 

We  consider some particular transition measures $Q$ in order to understand how the regularity of $\tq[\nul]$ (and $\nul$) depends on the form and the regularity of $Q$. 

\paragraph*{Continuous transition measure}
There exists a function $Q_1$ such that $Q(x,dy)=Q_1(x,y)dy$, and we can write
\[\tq[\nul](y)=\int_{\R^+}\int_{0}^y \nul(x)Q_1(x,z)dzdx\quad \text{and}\quad  \tq[\nul]'(y)=\int_{\R^+} \nul(x)Q_1(x,y)dx.\]
Moreover, as $Q_1(x,y)=0$ if $x<y$, with $\mathcal{I}=[i_1,i_2]$, we get \[\norm{\tq[\nul]}_{L^{\infty}(\mathcal{I})} \leq i_2  \norm{Q_1}_{L^{\infty}([i_1,\infty[\times \mathcal{I})}\int_{\R^+}\nul(x)dx= i_2  \norm{Q_1}_{L^{\infty}([i_1,\infty[\times \mathcal{I})}.\]
Furthermore, by definition of the 
  Hölder semi-norm, for $\rg=\lfloor \alpha\rfloor$ 
\begin{align*}
    |\tq[\nul]|_{\holder(\mathcal{I})}&= \sup_{t>0} t^{\rg-\alpha-1} \sup_{y,y'\in \mathcal{I}, |y-y'|\leq t}
    \left\vert \tq[\nul]^{(\rg)}(y)-\tq[\nul]^{(\rg)}(y')\right\vert \\
&\leq \sup_{t>0} t^{\rg-\alpha-1} \int_{\R^+} \nul(x)\sup_{x\geq y} \sup_{y,y'\in\mathcal{I},|y-y'|\leq t|} \left| \frac{\partial^{\rg-1} Q_1}{\partial y^{\rg-1}} (x,y')-\frac{\partial^{\rg-1} Q_1}{\partial y^{\rg-1}}(x,y)\right|dx \\
&\leq \sup_{t>0} t^{\rg-\alpha-1} \sup_{z\geq  i_1}   \sup_{y,y'\in\mathcal{I},|y-y'|\leq t} \left| \frac{\partial^{\rg-1} Q_1}{\partial y^{\rg-1}}(z,y)-\frac{\partial^{\rg-1} Q_1}{\partial y^{\rg-1}}(z,y')\right|\int_{\R^+} \nul(x)dx\\
&= \sup_{z\geq i_1}|Q_1(z,.)|_{\holder[\alpha-1](\mathcal{I})}.
\end{align*}
Then $\norm{\tq[\nul]}_{\holder(\mathcal{I})}\leq i_2 \norm{ Q_1}_{\holder[\alpha-1]([i_1,\infty[\times \mathcal{I})}$. 

\paragraph{Deterministic transition measure}
Let us assume that $Q$ can be written $Q(x,dy)=\delta_{f(x)}(dy)$ with $f$ a bijection.  As $\PP{Z\leq \kappa Y}=1$,  $f(0)=0$.   Then we have that
\[\nul(y)=\int_0^{f^{-1}(y)}\nul(x) \Lambda(f(x),y)dx\quad\text{and}\quad 
\tq[\nul](y)= \int_0^{f^{-1}(y)} \nul(x) dx.\] If $f^{-1}$ is differentiable:  \[\tq[\nul]'(y)=(f^{-1})'(y)\nul(f^{-1}(y)).\] 
So we get: 
\[\nul'(y)= \int_{0}^{f^{-1}(y)}\nul(x) \frac{\partial \Lambda}{\partial y}(f(x),y)dx + \Lambda(y,y)(f^{-1})'(y)\nul(f^{-1}(y)). \]
The regularity of $\nul'$ on $\mathcal{I}$ depends on the regularity of $\nul$ on $f^{-1}(\mathcal{I})$ and of $\Lambda$ and $f^{-1}$ on $\mathcal{I}$. By recurrence, there exists a function $\psi_2$ such that 
\[\norm{\tq[\nul]}_{\holder(\mathcal{I})}\leq \psi_2\left(\norm{\lambda}_{\holder(\mathcal{J})},\norm{(\phi_.^{-1})'}_{\holder([0,j_2]\times \mathcal{J})}, \norm{f^{-1}}_{\holder(\mathcal{J})} \right)\] where  \[ \mathcal{J}_0=\mathcal{I} \quad ,\quad  \mathcal{J}_{k+1}=\operatorname{Conv}\left(\mathcal{I}\cup \bigcup_{i=1}^j   f_i^{-1}(\mathcal{J}_{k})\right)\quad\text{and}\quad \mathcal{J}= \mathcal{J}_{\lfloor \alpha\rfloor}\cup [i_1,i'_2].\]
If $f$ is not a bijection (and $f(x)\neq 0$), then $\nul$ can be less regular than $\lambda$. Let us consider $f(x)=\lfloor x/2\rfloor$. Then 
\[\nul(y)=
\int_{\R^+} \nul(x)
\sum_{k=0}^{\lfloor y\rfloor} \nbOne_{k=\lfloor x/2\rfloor }\Lambda(k,y)dx=\sum_{k=0}^{\infty}\numes([2k,2k+2])\Lambda(k,y)\nbOne_{k\leq y}\]
Then $\nul$ is a piecewise constant function and is not differentiable. We can remark that \[\tq[\nul](y)=\sum_{k=0}^{\infty} \numes([2k,2k+2])\nbOne_{k\leq y}\]
is not differentiable. 

If $Q(x,dy)=\delta_0(y)$ (which implies that the vectors $(Z_k,Y_k)$ are independent), then
$\nul(y)=\int_{\R^+}\nul(x)\Lambda(0,y)dx=\Lambda(0,y)$ has the same regularity as $\Lambda$. We can remark that $\tq[\nul](y)=\int_{\R^+}\nul(x)=1$ is $C^{\infty}$. 

\paragraph{General case}
Under Assumption (S), 
\[Q(x,dy)=Q_1(x,y)dy+p_0(x)\delta_0(dy)+\sum_{i=1}^{j_Q} p_i(x)\delta_{f_i(x)}(dy)\]
with $(f_i)$ invertible, therefore
\[\tq[\nul](y)=\int_{\R^+}\int_0^y \nul(x)Q_1(x,z)dz+ \int_{\R^+}\nul(x)p_0(x)dx+\sum_{i=1}^{j_Q} \int_{0}^{f_i^{-1}(y)} p_i(x)\nul(x)dx\]
and
\[\tq[\nul]'(y)=\int_{\R^+}\nul(x)\frac{\partial Q_1}{\partial y} (x,y)dx+\sum_{i=1}^{j_Q} p_i(f_i^{-1}(y))\nul(f_i^{-1}(y))(f_i^{-1})(y). \]
Therefore, there exists a function $\psi_2$ such that 
\[\norm{\nul}_{\holder(\mathcal{I})} \leq \psi_2\left(\norm{\Lambda}_{\holder(\mathcal{J})},(\norm{f_i}_{\holder(\mathcal{J})})_{1\leq i\leq j_Q},(\norm{p_i}_{\holder[\alpha-1](\mathcal{J})})_{1\leq i\leq j_Q},\norm{Q_1}_{\holder[\alpha-1](\mathcal{J})}\right).\]
As $\lambda\in \holder(\mathcal{J})$ and $\forall x,  (\phi_x^{-1})'\in \holder(\mathcal{J})$, then 
$\forall x$, $\Lambda(z,.)\in\holder(\mathcal{J})$ and there exists a continuous function $\psi_1$ such that 
\[ \norm{\Lambda(.,.)}_{H^{\alpha}([0,j_2] \times \mathcal{J})}\leq \psi_1\left(\norm{(\phi_.^{-1})'}_{H^{\alpha}([0,j_2]\times  \mathcal{I})}, \norm{\lambda}_{H^{\alpha}(\mathcal{J})}
\right)\] which ends the proof.

\subsection{Proof of Talagrand's inequality for beta-mixing variables}

The following  lemma is very useful to replace weak dependent variables by  variables which are independent by blocks. It is proved by \citet[proof of Proposition 5.1]{viennet97}. 
\begin{lemma}[Berbee's coupling lemma]\label{berbee}

The random variables $\{Y_{k}\}_{k\in\mathbb{N}}$ are exponentially  $\beta$-mixing. 
Let us set $q_n=\lfloor (r+1)\ln(n)/\beta\rfloor$ where $\beta$ characterizes the $\beta$-mixing coefficient (see Definition \ref{def_mixing}). We have that  $\beta(q_n)\leq 1/n^{r+1}$. We set $p_n=n/(2q_n)$. 
There exist random vectors
$(Y_1^ *,\ldots,Y_n^*)$ such that: 
\begin{itemize}
\item $Y_i$ and $Y_i^*$ have same law. 
\item The random vectors $(Y_{2kq_n+1}^*,\ldots,Y_{(2k+1)q_n}^*)_{0\leq k< p_n}$ are independent, as the random vectors\\ $(Y_{(2k+1)q_n+1}^*,\ldots,Y_{(2k+2)q_n}^*)_{0\leq k<p_n}$. 
\item For any integer $k$, $0\leq k\leq 2p_n-1$, $\PP{Y_{kq_n+1},\ldots,Y_{(k+1)q_n})\neq(Y^*_{kq_n+1},\ldots,Y^*_{(k+1)q_n})}\leq \beta_Y (q_n)\leq  n^{-(r+1)}$. 
\end{itemize}
\end{lemma}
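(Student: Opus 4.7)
The plan is to reduce the statement to Berbee's abstract one-block coupling result (see e.g.\ \cite{viennet97}) and iterate it block by block. Recall the abstract coupling: given two random vectors $X$ and $Y$ taking values in Polish spaces on a sufficiently rich probability space, there exists $X^\star$ with the same law as $X$, independent of $Y$, and satisfying $\mathbb{P}(X\neq X^\star)=\beta(\sigma(X),\sigma(Y))$.

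First I would group the $n$ observations into $2p_n$ consecutive blocks $B_j=(Z_{(j-1)q_n+1},\ldots,Z_{jq_n})$ of length $q_n$, and construct the coupled odd subfamily and even subfamily separately. For the odd subfamily: set $B_1^\star=B_1$ and, for $j\geq 1$, apply the abstract Berbee coupling to $X:=B_{2j+1}$ and $Y:=(B_1^\star,\ldots,B_{2j-1}^\star)$ to produce $B_{2j+1}^\star$ with the same marginal law as $B_{2j+1}$, independent of all previously constructed odd blocks, and differing from $B_{2j+1}$ on a set of probability at most the $\beta$-mixing coefficient between the relevant $\sigma$-algebras. Since $B_{2j+1}$ is measurable with respect to $\sigma(Z_k: k>2jq_n)$ while the past is measurable with respect to $\sigma(Z_k: k\leq(2j-1)q_n)$, these $\sigma$-algebras are separated by at least $q_n$ time steps, so the error at step $j$ is bounded by $\beta(q_n)$. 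The even subfamily $B_2^\star,B_4^\star,\ldots$ is constructed analogously on a further enlarged probability space.

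Finally, the choice $q_n=\lfloor (r+1)\ln(n)/\beta\rfloor$, combined with the exponential $\beta$-mixing bound $\beta_Z(t)\leq ce^{-\beta t}$ (Corollary \ref{cor_betageometricmixing}), gives $\beta(q_n)\leq n^{-(r+1)}$, which is the per-block error bound in the statement. The main subtlety is the recursive conditioning: to iterate the one-block Berbee result $p_n-1$ times, one must extend the underlying probability space (e.g.\ by adjoining an independent $\mathrm{Unif}(0,1)$ randomizer at each step) so that the coupling can be performed conditionally on the already-constructed past. Once this is arranged, the joint independence of the odd subfamily follows by induction, since each $B_{2j+1}^\star$ is by construction independent of $\sigma(B_1^\star,\ldots,B_{2j-1}^\star)$; the even subfamily is treated identically.
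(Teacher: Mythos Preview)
The paper does not give its own proof of this lemma: it simply refers to \citet[proof of Proposition 5.1]{viennet97}. Your sketch reconstructs the standard block-by-block iteration of Berbee's one-step coupling that underlies Viennet's argument, and is essentially correct.

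One small point worth tightening: when you write ``apply the abstract Berbee coupling to $X:=B_{2j+1}$ and $Y:=(B_1^\star,\ldots,B_{2j-1}^\star)$'', the $\beta$-coefficient you obtain is $\beta(\sigma(B_{2j+1}),\sigma(B_1^\star,\ldots,B_{2j-1}^\star))$, and it is not immediate that this equals $\beta_Z(q_n)$, since the starred blocks are not the original ones. The cleanest fix is either (i) to couple against the \emph{original} past $(B_1,\ldots,B_{2j-1})$ together with the auxiliary uniforms already introduced, and then observe that the starred blocks are measurable with respect to that enlarged $\sigma$-field, or (ii) to invoke the fact that adjoining a $\sigma$-field independent of $(X,Y)$ does not increase $\beta(\sigma(X),\sigma(Y))$. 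Either way the bound $\beta(q_n)$ follows, and your inductive independence claim goes through.
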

Let us set $\Omega^*=\{\omega, \forall k, Y_k=Y_k^*\}$. Then
\[
\PP{\Omega^{*c}}\leq n\beta_Y (q_n)\leq\frac{1}{n^r}.\]

This following inequality comes from Talagrand's inequalities (see \citet[Corollary 2 p354]{birgemassart98}).
\begin{lemma}[Talagrand's inequality]\label{talagrand}

Let  $X_1,\ldots,X_n$ be independent random variables and $S$ a vectorial subspace of finite dimension $D$ satisfying Assumption \ref{hypo_sev}. We denote by  $\rond{F}$  a countable family of $S$. 
Let us set 
\[
F_n(u)=\frac{1}{n}\sum_{k=1}^ n u(X_k)-\EE{u(X_k)}\]with $u\in L^2$. 
If  
\[ \sup_{u\in\rond{F}}\norm{u}_{\infty}\leq M_2,\quad \EE{\sup_{u\in\rond{F}}\vert F_n(u)\vert }\leq H,\quad 
\sup_{u\in\rond{F}}\Var{u(X_k))}\leq V,\]
then
\[
\EE{\sup_{u\in\rond{F}}F_n^2(u)-6H^ 2}_+\leq C\left(\frac{V}{n}\exp\left(-\frac{nH^ 2}{6V}\right)+\frac{M_2^ 2}{n^ 2}\exp\left(-k_2\frac{nH}{M_2}\right)\right)\]
where $C$ is a universal constant and $k_2=(\sqrt{2}-1)/(21\sqrt{2})$. 
\end{lemma}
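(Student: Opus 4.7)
The aim is to establish Lemma~\ref{talagrand} by combining Talagrand's concentration inequality for the supremum of an empirical process with a layer-cake integration. Set $Z = \sup_{u\in\rond{F}}|F_n(u)|$. The plan is threefold: first, derive an exponential two-regime tail bound for $Z$ centered at $\EE{Z}$; second, replace the centering $\EE{Z}$ by the upper bound $H$; third, translate this tail bound into a bound on $\EE{(Z^2 - 6H^2)_+}$ via the identity $\EE{(Y)_+} = \int_0^\infty \PP{Y > s}\,ds$.

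For the first two steps, Bousquet's refinement of Talagrand's inequality (equivalently, Massart's version with explicit constants) yields, under the hypotheses $\sup_{u\in\rond{F}}\norm{u}_{\infty}\le M_2$ and $\sup_{u\in\rond{F}}\Var{u(Y_k)}\le V$, the estimate
\[
\PP{Z \ge \EE{Z} + \sqrt{\tfrac{2Vt}{n}} + \tfrac{\kappa M_2 t}{n}} \le e^{-t}, \qquad t > 0,
\]
for a universal constant $\kappa$. Using $\EE{Z}\le H$ and observing that for $\sqrt{2Vt/n}+\kappa M_2 t/n \ge v$ to hold one of the two summands must itself be at least $v/2$, one obtains for every $v > 0$
\[
\PP{Z \ge H + v} \le \exp\!\bigl(-nv^2/(8V)\bigr) + \exp\!\bigl(-nv/(2\kappa M_2)\bigr).
\]

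For the third step, write
\[
\EE{(Z^2 - 6H^2)_+} = \int_0^{\infty}\PP{Z^2 > 6H^2 + s}\,ds = \int_{\sqrt 6 H}^{\infty} 2u\,\PP{Z > u}\,du,
\]
and substitute $v = u - H$, so that $v \ge (\sqrt 6 - 1)H$ on the integration domain. Inserting the two-regime tail yields two integrals. The Gaussian piece, after integrating $2(H+v)\exp(-nv^2/(8V))$ from $(\sqrt 6-1)H$ to $\infty$, is bounded by $C(V/n)\exp(-nH^2/(6V))$, the factor $1/6$ in the exponent coming from $(\sqrt{6}-1)^2/8$ together with the usual Gaussian-tail prefactor absorption. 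The linear piece, from integrating $2(H+v)\exp(-nv/(2\kappa M_2))$, evaluates to $C(M_2^2/n^2)\exp(-k_2 nH/M_2)$, with $k_2$ obtained from $(\sqrt{6}-1)/(2\kappa)$ once $\kappa$ is fixed to its sharpest admissible value.

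The only genuine obstacle is bookkeeping of numerical constants: the exact values $6$ in ``$6H^2$'' and $k_2 = (\sqrt{2}-1)/(21\sqrt{2})$ depend on which form of Talagrand's inequality is invoked in step one and on how carefully the Gaussian/linear split is balanced. Rather than reoptimizing these from scratch, the cleanest route is to cite Corollary~2, p.~354 of \citet{birgemassart98} directly, whose proof follows precisely the concentration-plus-integration scheme above with the constants already tightened; once the concentration inequality is granted, everything else is a deterministic calculus exercise.
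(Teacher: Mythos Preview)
Your proposal is correct and follows essentially the same scheme as the paper: obtain a Talagrand-type deviation bound for $Z=\sup_{u\in\rond{F}}|F_n(u)|$, split it into a sub-Gaussian and a sub-exponential regime, and integrate via the layer-cake formula to pass to $\EE{(Z^2-6H^2)_+}$. The only differences are bibliographic: the paper obtains the concentration step from Theorem~1.1 of \citet{kleinrio2005} (rather than Bousquet), reshapes it into the two-regime form via Corollary~2 of \citet{birgemassart98}, and then defers the integration computation to \citet[pp.~222--223]{comtemerlevede2002} instead of carrying it out explicitly as you do.
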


\begin{proof}[Proof of lemma \ref{talagrand}]
We apply Theorem 1.1 of \citet{kleinrio2005} to the functions $s^i(u)=\frac{u(Y_i)-\EE{u(Y_i)}}{2M_2}$ (notation used in Theorem 1.1 of \citet{kleinrio2005}). We obtain that 
\[\PP{ \sup_{u\in\rond{F}} \vert F_n(u)\vert 
\geq H+x}
\leq \exp\left(-\frac{nx^2}{2(V+4HM_2)+6M_2x}\right).\]
We modify this inequality following Corollary 2 of \citet{birgemassart98}. It gives: 
\[ \PP{ \sup_{u\in\rond{F}} \vert F_n(u)\vert 
\geq (1+\eta) H+x}
\leq \exp\left(-\frac{n}{3}\min\left(\frac{x^2}{2V},\frac{\min(\eta,1)x}
{7M_2}\right)\right).\]
The end of the proof is done in \citet[p222-223]{comtemerlevede2002}. 
\end{proof}

\begin{proof}[Proof of lemma \ref{lem_talagrand_mixing}]
To deduce lemma \ref{lem_talagrand_mixing}, we simply apply the Berbee's coupling lemma to exponential $\beta$-mixing variables, and then the Talagrand's inequality. 
Indeed, by Berbee's coupling lemma, as $Y_k^*$ and $Y_k$ have same law: 
\[I_n(\ft)=\frac{1}{n}\sum_{k=1}^n \ft(Y_k^*)-\EE{\ft(Y_k^*)} + \ft(Y_k)-\ft(Y_k^*).\]
We first bound the second part of the sum 
$I_2(\ft):= \frac{1}{n}\sum_{k=1}^n \ft(Y_k)-\ft(Y_k^*)$.
We have:
\[
I_2^2(\ft)=\frac{1}{n^2}\left(\sum_{k=1}^n (\ft(Y_k)-\ft(Y_k^*))\nbOne_{Y_k\neq Y_{k}^*}\right)^2\leq \frac{4M_2^2}{n^2} \left(\sum_{k=1}^n \nbOne_{\{Y_k\neq Y_{k}^*\}}\right)^2
\]
By Cauchy-Schwartz,
$
I_2^2(\ft) \leq \frac{4M_2^2}{n}\sum_{k=1}^n \nbOne_{\{Y_k\neq Y_k^*\}} $
and by  Berbee's coupling lemma, \\
$
\EE{\sup_{\ft\in\rond{B}} I_2(\ft)}\leq \frac{4M_2^2}{n^2}.
$

Let us now bound the first term 
$I_1(\ft):=\frac{1}{n} \sum_{k=1}^n \ft(Y_k^*)-\EE{\ft(Y_k^*)}$.
We have
\[
I_1=
\frac{1}{p_n}\sum_{j=0}^{p_n-1} u_{\ft}(X_{j,0})-\EE{u_{\ft}(X_{j,0}})+
  \frac{1}{p_n}\sum_{j=0}^{p_n-1} u_{\ft}(X_{j,1})-\EE{u_{\ft}(X_{j,1}})
\]
where  
$X_{j,i}:=\left(Y^*_{2(j+i)q_n+1},\ldots,Y^*_{(2(j+i)+1)q_n}\right)$
and
$ u_{\ft}(x_1,\ldots,x_{q_n}):= \frac{1}{q_n}\sum_{k=1}^{q_n} \ft(x_k)$.
The random variables $X_{j,0}$ are independent, the same can be said for $X_{j,1}$.  Moreover, $\vert X_{j,i}\vert \leq M_2$ and $\Var{X_{j,i}}\leq V$.
Let us set 
\[I_{n,i}^*(s):=\frac{1}{p_n} \sum_{j=0}^{p_n-1} u_{s}(X_{j,i})-\EE{u_s(X_{j,i})}.\]
We have: $I_1(s):=(I_{n,0}^*(s)+I_{n,1}^*(s))/2$.   
Then,
\begin{align*}
\EE{\sup_{s\in\rond{B}} I_1^2(s)-6H^2}_+
&\leq \EE{\sup_{s\in\rond{B} }\frac{1}{4}\left(2(I_{n,0}^*(s))^2+2(I_{n,1}^*(s))^2\right)-6H^2}\\
&\leq \sum_{i=0}^1 \EE{\sup_{s\in\rond{B}} (I^*_{n,i}(s))^2-6H^2}_+.
\end{align*}

As the dimension of $S$ is finite, we can find a countable family $\rond{F}$ dense in $\rond{B}$ and we can then apply the Talagrand's inequality to $I_{n,0}^*$ and $I_{n,1}^*$ which concludes the proof.   

\fussy 
\end{proof}

\section*{Appendix B: Simulations}\label{section_simulation}

For the simulations,  two very classical PDMP processes are considered: the TCP  and the size of a marked bacteria.

\paragraph*{TCP protocol.} 
The transmission control protocol (TCP)   is one of the main data transmission protocol in the Internet. The maximum number of packets that can be sent at time $t_k$ in a round is a random variable $X_{t_k}$. If the transmission is successful, then the maximum number of packets is increased by one:  $X_{t_{k+1}}=X_{t_k}+1$. If the transmission fails, then  $X_{t_{k+1}}=\kappa X_{t_k}$ with  $\kappa\in(0,1)$. A correct scaling of this process leads to a piecewise deterministic Markov process  $(X_t)$ with the characteristics:
\[\phi(x,t)=x+ct,\quad Q(x,\{y\})=\nbOne_{\{y=\kappa x\}},\quad \lambda.\]
Then the function $(\phi_x^{-1})'$ is constant:  $(\phi_x^{-1})'=1/c$. Let us denote by $\Lambda$ a primitive of $\lambda$. By \eqref{fonction_repartition_t1}, we have: 
\[
\mathbb{P}(Y_1> y\vert Z_0=x)=\exp\left(-\frac{1}{c}\left(\Lambda(y)-\Lambda(x)\right)\right)\nbOne_{\{y\geq  x\}}.\]
As $\lambda$ is positive, its primitive is invertible and by a change of variable: 
\[ \mathbb{P}(\Lambda(Y_1)> v\vert \Lambda(Z_0)=u)=\exp\left(-\frac{1}{c}\left(v-u\right)\right)\nbOne_{\{v\geq u\}}.\]
Then $\Lambda(Y_j)\vert \Lambda(Z_{j-1})$ follows an exponential law translated by $\Lambda(Z_{j-1})$ and of parameter $1/c$. 
Therefore, if we can find the inverse of the  function $\Lambda$, we can construct the sequence $(Y_j,Z_j)$ by recurrence: 
\begin{equation}\Lambda(Y_j)=\Lambda(Z_{j-1})+cE_j,\quad Z_j=\kappa Y_j \label{eq_construction_Zk_recurrence}
\end{equation}
where $E_j$ are i.i.d. of law $\rond{E}(1)$.

If $\lambda(x)=\lambda x^{\delta}$ with $\delta>-1$, then 
$Y_{j}^{\delta+1}=Z_{j-1}^{\delta+1}+c(\delta+1)/\lambda E_j$ and we obtain
\[Y_j=\sqrt[\delta+1]{Z_{j-1}^{\delta+1}+\frac{(\delta+1)c}{\lambda}E_j}.\]
This model satisfies  Assumption (S).  
In order to have a model with a non-increasing function $\lambda$, we also consider the function
$\lambda(x)=(x-a)^2+b$ with $a>0$, $b\geq 0$.
In that case, by \eqref{eq_construction_Zk_recurrence},
\[(Y_j-a)^3+3b(Y_j-a)=(Z_{j-1}-a)^3+3b(Z_{j-1}-a)+3cE_j\]
and, by Cardan's formula, this equation has a unique real solution, which is
\[Y_j=a+\frac{\sqrt[3]{Q+\sqrt{4b^3+Q^2}}+\sqrt[3]{Q-\sqrt{4b^3+Q^2}}}{2}
\]
where
$Q=3cE_j+(Z_{j-1}-a)^3+3b(Z_{j-1}-a)$.
This model also satisfies  Assumption (S).

\paragraph*{Bacterial growth.}

We choose randomly a bacteria, and follow its growth, until it divides in two parts more or less equal. Then we choose randomly one of its daughter, and so on.  Between the jumps, the  bacteria grows exponentially. During a jump,
the size of the bacteria is more or less divided by two. We model this by setting $Z_k=Y_k\times U_k$, where $U_k$ is a random variable  independent of $Y_k$, in $(0,1)$, and centered in $1/2$. The Beta distribution $\beta(\alpha,\alpha)$ satisfies these conditions. For $\alpha=1$, it is the uniform distribution, and when $\alpha$ increases, the distribution is more concentrated around $1/2$. We choose $\alpha=20$. 
Then 
\[\phi(x,t)=xe^{ct}, \quad Q(x,y)=\frac{1}{\beta(20,20)}\frac{y^{19}(x-y)^{19}}{x^{38}}\nbOne_{\{y\leq x\}}.\]
 Then $(\phi_x^{-1})'(y)=\frac{1}{y}$ and by \eqref{fonction_repartition_t1}, 
\[\mathbb{P}(Y_1> y\vert Z_0=x)=\exp\left(-\frac{1}{c}\int_{x}^{y} \frac{\lambda(s)}{s}ds\right)\nbOne_{\{y\geq  x\}}.
\]
We need to find a primitive of $\lambda(x)/x$. If $\lambda(x)=\lambda x^{\delta}$, $\delta>0$, then: 
\[\mathbb{P}(Y_1> y\vert Z_0=x)=\exp\left(-\frac{\lambda}{\delta  c}\left(y^{\delta}-x^{\delta}\right)\right)\nbOne_{\{y\geq  x\}}.\]
Therefore
\[\PP{Y_1^{\delta}> y\vert Z_0^{\delta}\geq x}= \exp\left(- \frac{\lambda}{\delta c} (y-x)\right)\nbOne_{\{y\geq x\}}\]
and the law of the random variable $Y_k^{\delta}$ is an exponential translated by $Z_{k-1}^{\delta}$ and of parameter $\lambda/\delta c$. 
Then
\[Y_{k}= \sqrt[\delta]{\frac{\delta c}{\lambda}E_k+Z_{k-1}^{\delta}},\quad Z_k=Y_kU_k\] with $E_k\sim \rond{E}(1)$ i.i.d. and $U_k\sim\beta(20,20)$ i.i.d. 
All the conditions of Assumption (S) are satisfied, except point  \ref{hypo_Q}. Indeed, $\PP{Z<Y}=1$, but there do not exists any $\kappa<1$ such that $\PP{Z\leq \kappa Y}=1$. However,  in the simulations, it seems that the process is ergodic and that A\ref{hypo_D_positif} is satisfied.  

\paragraph*{Computations}
For the two models, $\nul$ has a density with respect to the Lebesgue measure on $\mathbb{R}$, so it can be estimated on any compact interval $\mathcal{A}$, here $\mathcal{A}=[-1,5]$ to avoid edge effects. The estimator is computed thanks to a projection on a trigonometric basis. The constant involved in $pen(m)$, $\textit{cpen}$, should be greater than $\frac{3}{2}\left(\psi_1+\psi_2 C_{\lambda}\right)$, with $\psi_1=\psi_2=\frac{1}{3}$. The problem is that $C_{\lambda}$, a correlation term, is not easily tractable. 
We set \textit{cpen}=$\psi_1+\psi_2=2/3$ for all models.    This choice seems confirmed by the simulations results: the oracle $or$ remains close to 1. 

The constant \textit{cpen} could be determined via the slope heuristic. Indeed, if the constant in the penalty is too small, the algorithm selects the maximal dimension. If the penalty is large enough, it selects  models of reasonable size. We then let the constant $c$ in the penalty vary and note the dimension selected. For $c$ smaller than a value $c_{min}$,  the largest models are selected, and for $c$ greater than $c_{min}$,  smaller models are chosen. The "best" constant is $c=2c_{min}$. See \citet{arlotmassart2009} for instance. 
 
 Figure \ref{fig_choix_dim} shows the
 selected dimension with respect to \textit{cpen}, the constant in the penalty. 
When the constant in the penalty increases, the chosen dimension first decreases very rapidly, until  \textit{cpen}=$0.24$, then it decreases very slowly towards 1. Then  $2c_{min}=0.48$. Our chosen penalty constant, $2/3$, is a little greater than $2c_{min}$, and selects the same dimension (here 17).

\begin{figure}[h]
\caption{Choice of the dimension}\label{fig_choix_dim}
\includegraphics[width=0.5\linewidth]{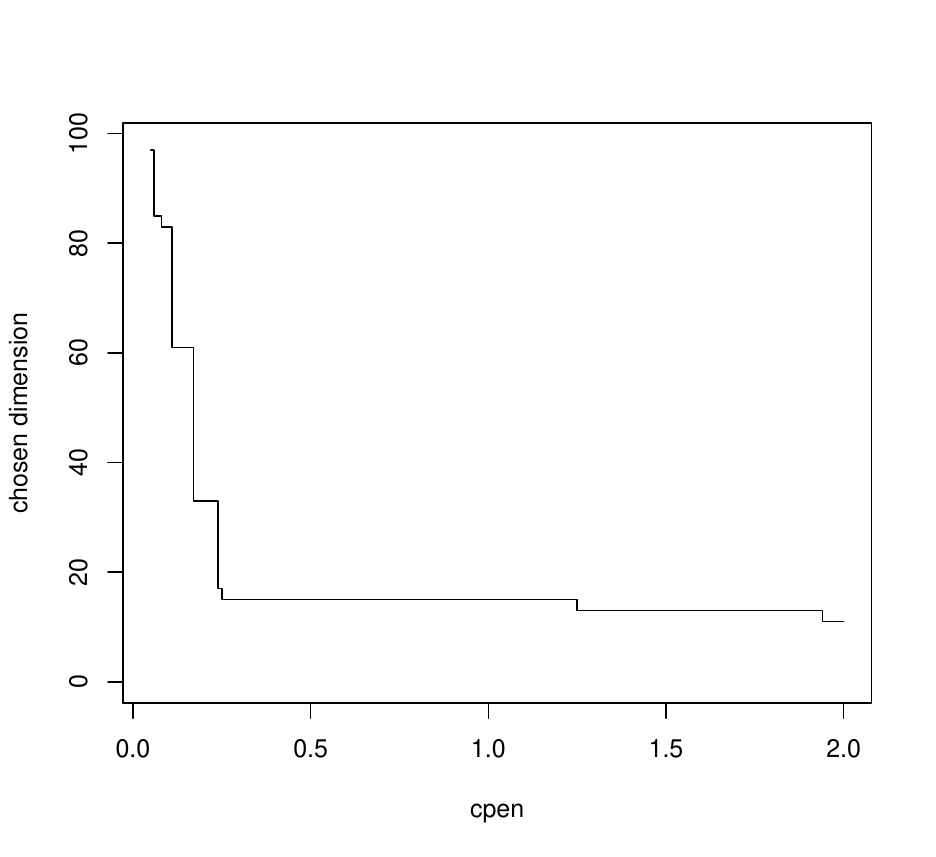}

\end{figure}

 However, the slope heuristic involves quite a lot of computations, so it can not be used for every simulation, only to check that the penalty constant is coherent.

In Figures \ref{figure_TCP} and \ref{figure_bacteria}, for each graph,  five simulations of the PDMP with $n=10^5$ are realized. For each simulation,  the estimator $\hat{\lambda}$, the density $\hnuml[\hat{m}]$ and $\hat{\Dg}_n$ are drawn.

In the tables, 200 simulations for each 4-tuple $(n,c,\kappa,\lambda)$ are computed. The estimation interval $\mathcal{I}=[0.5,2]$  is such that $\Dg$ is greater than the threshold $(\ln(n))^{-1}$ on $\mathcal{I}$ for $n=10^{-5}$ for all our models. 
For each set of parameters,  the mean of the selected dimension $\hat{D}_m$,    the mean and the standard variation of the $L^2$ error on  $\mathcal{I}$,  denoted by "risk" and "sd" are calculated.
We also want to prove that our estimator is truly adaptive. As $\nul$ is unknown, we can not check that $\hat{m}$ is the better choice for estimating $\nul$.  Instead, let us consider the estimator 
\[\hat{\lambda}_m=\frac{\hnuml}{\Dge} \nbOne_{\{\hnuml\geq 0\}}\nbOne_{\{\Dge \geq (\ln(n))^{-1}\}}.\]
Then  $\hat{\lambda}_n=\hat{\lambda}_{\hat{m}}.$
The optimal dimension is 
\[m_{opt}=\min_{m\in\rond{M}_n} \norm{\hat{\lambda}_m-\lambda}_{L^2(\mathcal{I})}^2\]
and the minimal risk $\norm{\hat{\lambda}_{m_{opt}}-\lambda}_{L^2(\mathcal{I})}^2$. 
In the tables, we give the empirical means of $D_{\hat{m}}$, $D_{m_{opt}}$, the empirical mean and standard deviation of the risk and the empirical mean of the oracle
\[ or:= \textrm{mean}\left( \norm{\hat{\lambda}_{\hat{m}}-\lambda}_{L^2(\mathcal{I})}^2/\norm{\hat{\lambda}_{m_{opt}}-\lambda}_{L^2(\mathcal{I})}^2\right).\]

In Figure \ref{fig_cv}, four simulations are realized, each for a different value of $n$ ($n=10^2$, $10^3$, $10^4$ and $10^5$) in order to show the convergence of our estimator.

\paragraph*{Results}

 In Figures \ref{figure_TCP}-\ref{figure_bacteria}, the estimator $\hat{\lambda}$ is very close to $\lambda$, at least when $x$ is neither too small nor too large, that is when there are enough values to compute the estimator.  The estimators  $\hnuml[\hat{m}]$ and $\hat{\Dg}_n$ are quite smooth, whereas $\hat{\lambda}$ tends to oscillate. This is due to the division of two estimators.
In Tables \ref{figure_TCP}-\ref{figure_bacteria}, the risk decreases when  $n$ increases and seems to tend toward 0. The oracle remains close to 1, our estimator is really adaptive. 
When the number of observations is small, the risk may seem quite important (for instance, for figure \ref{figure_bacteria} when $\lambda(x)=x^2$). This is simply because $\Dg$ is smaller than the threshold ($1/\ln(10^2)=0.2$), and the estimator $\hat{\lambda}$ is set to 0 on some part of $\mathcal{I}$, or even on the whole interval.  The estimation near 0 can be good for some models, for instance when $\kappa=1/5$ and $\lambda(x)=x$, because the random variables $Z_k$ take smaller values (at a jump, we divide the process by 5 instead of by 2). The function $\mathbf{D}$ then take higher values near 0, and the estimator $\hat{\lambda}$ is positive even for small values of $x$. 
This problem is illustrated in Figure \ref{fig_cv}: when $n$ increases, the estimator is better both because the support interval of $\hat{\lambda}$ increases and because on the support interval, the estimator is closer to the true function.

\begin{figure}[p]
\begin{small}
\caption{TCP protocol: $\phi(x,t)=x+t$, $Q(x,y)=\delta_{y=\kappa x}$} \label{figure_TCP}

\begin{minipage}[c]{0.45\linewidth}
\includegraphics[height=3.5cm,width=\linewidth]{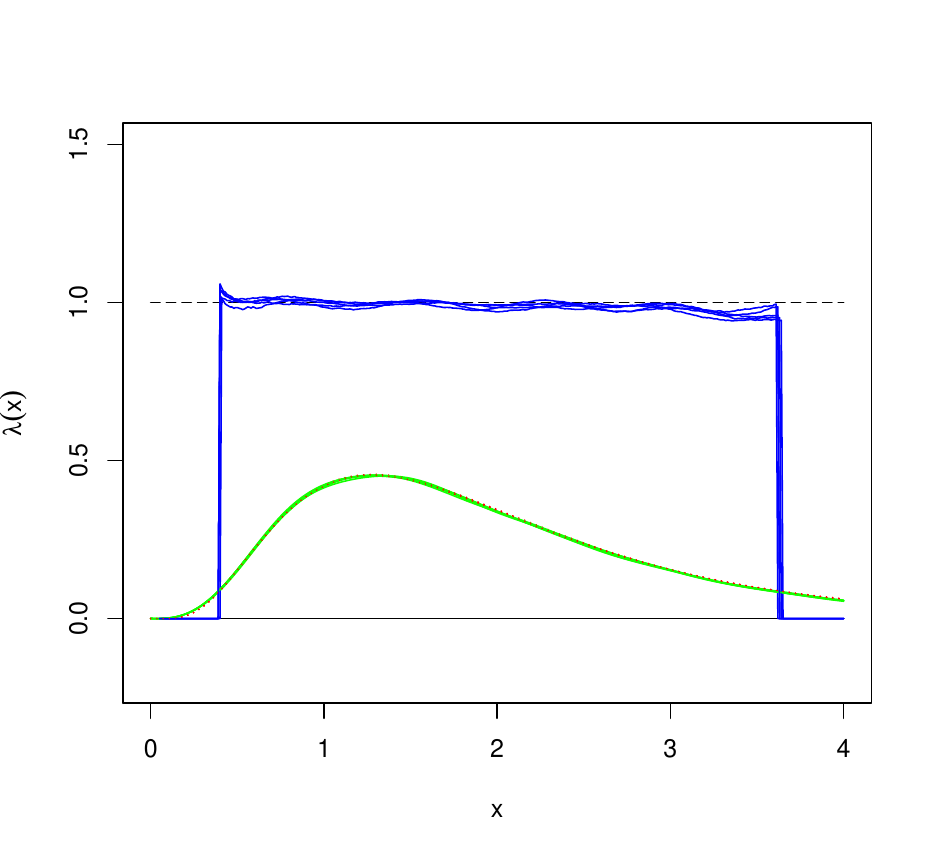}
\end{minipage}
\hfill
\begin{minipage}[c]{0.54\linewidth}
\[\kappa=1/2,\lambda(x)=1,\mathcal{I}=[0.5,2]\]
\begin{tabular}{|c|n{2}{1}n{2}{1}|c c n{1}{2}|}
\hline
$n$& {$D_{\hat{m}}$}&{$D_{m_{opt}}$} & risk & sd& {or}\\
\hline
$10^2$ & 5.1 & 6.7 & $0.12$ & $0.06$ & 1.27\\ 
$10^3$& 8.3 & 10.1 & $9.6$\texttt{e-}$3$ & $9.6$\texttt{e-}$3$ & 1.72\\ 
$10^4$& 12.3 & 14.4 & $8.5$\texttt{e-}$4$ & $5.5$\texttt{e-}$4$& 1.64\\ 
$10^5$& 17.1 & 18.6 & $1.2$\texttt{e-}$4$ & 6.6\texttt{e-}$5$ & 1.47\\
\hline
\end{tabular}

\end{minipage}

\begin{minipage}[c]{0.45\linewidth}
\includegraphics[height=3.5cm,width=\linewidth]{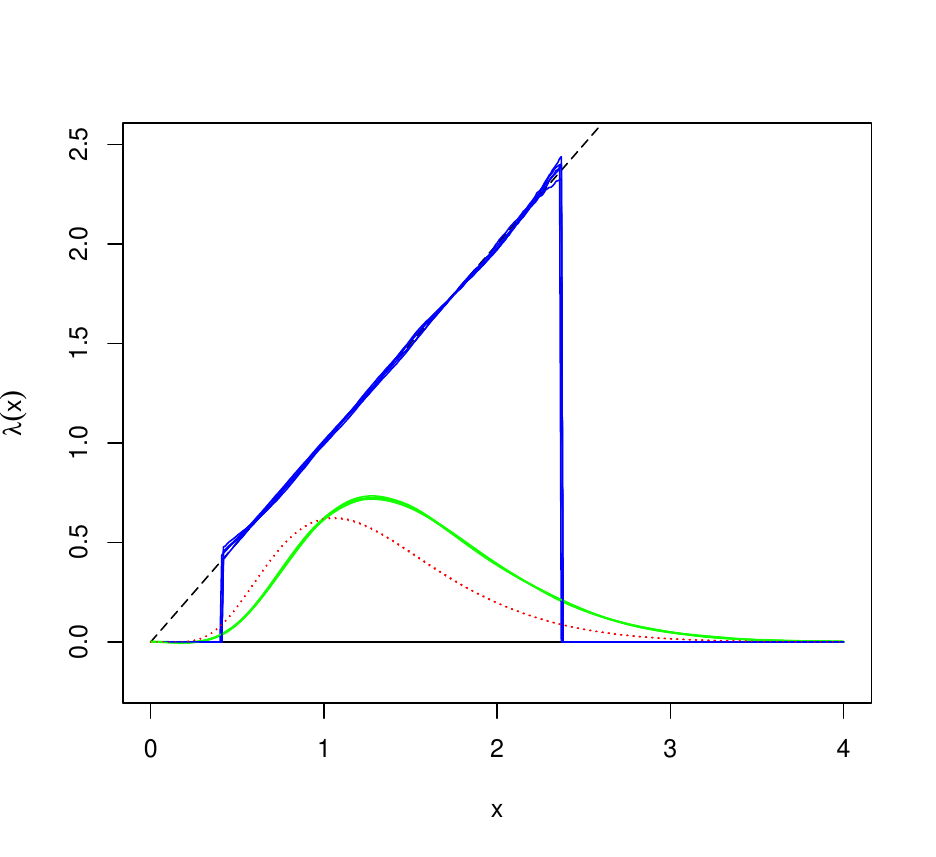}
\end{minipage}
\hfill
\begin{minipage}[c]{0.54\linewidth}
\[\kappa=1/2,\lambda(x)=x,\mathcal{I}=[0.5,2]\] 
\begin{tabular}{|c|n{2}{1}n{2}{1}|ccn{1}{2}|}
\hline
$n$& {$D_{\hat{m}}$}& {$D_{m_{opt}}$}& risk & sd&{or}\\
\hline
$10^2$& 7.1 & 8.5 & $0.24$ &$0.20$ & 1.16\\ 
$10^3$& 10.4&12.8& $9.5$\texttt{e-}$3$&$6.5$\texttt{e-}$3$& 1.64\\ 
$10^4$& 14.1 & 16.4 & $1.1$\texttt{e-}$3$ & $7.3$\texttt{e-}$4$ & 1.49\\ 
$10^5$& 18.1& 20.5& $1.2$\texttt{e-}$4$ & $6.9$\texttt{e-}$05$ & 1.31\\
\hline
\end{tabular}\\
\end{minipage}

\begin{minipage}[c]{0.45\linewidth}
\includegraphics[height=3.5cm,width=\linewidth]{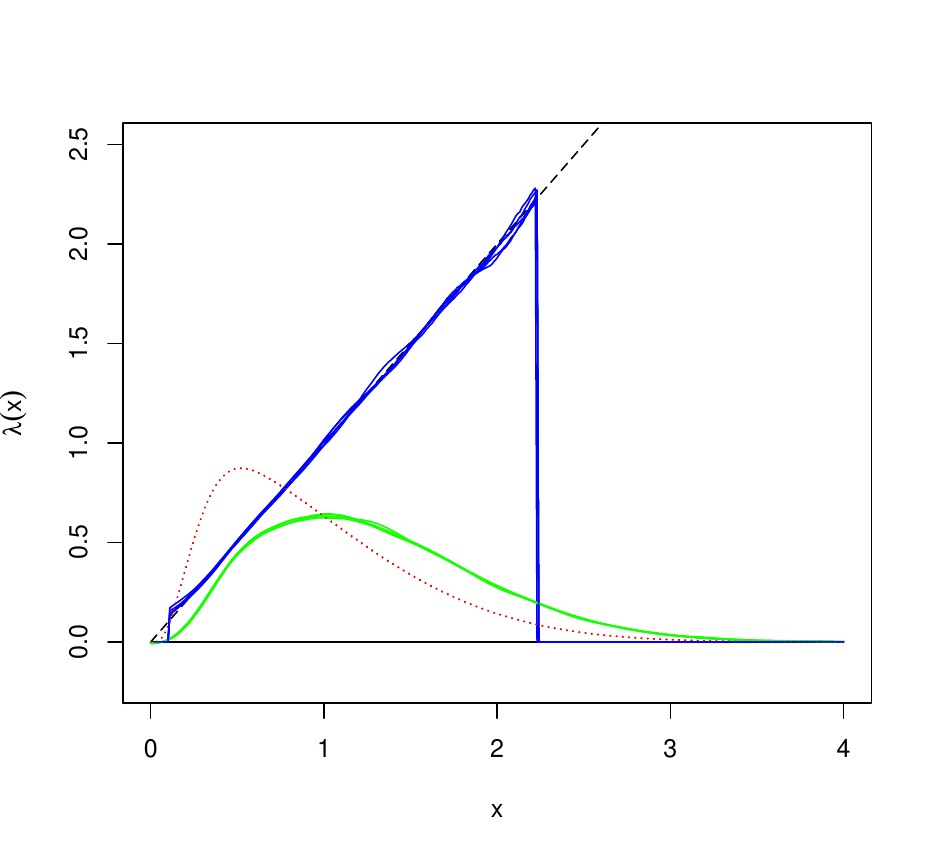}
\end{minipage}
\hfill
\begin{minipage}[c]{0.54\linewidth}
\[\kappa=1/5,\lambda(x)=x,\mathcal{I}=[0.5,2]\] 
\begin{tabular}{|c|n{2}{1}n{2}{1}|ccn{1}{2}|}
\hline
$n$& {$D_{\hat{m}}$}&{$D_{m_{opt}}$}& risk &sd&{or}\\
\hline
$10^2$& 6.6& 6.7 & $0.54$& $0.24$&1.08\\ 
$10^3$& 10.9& 10.7& $0.077$& $0.083$ & 1.40\\ 
$10^4$& 18.0& 15.9& $1.5\texttt{e-}3$& $1.0\texttt{e-}3$ & 2.17\\ 
$10^5$& 25.9 & 22.4 & $2.0\texttt{e-}4$& $1.2\texttt{e-}4$ & 1.81\\
\hline
\end{tabular}
\end{minipage}

\begin{minipage}[c]{0.45\linewidth}
\includegraphics[height=3.5cm,width=\linewidth]{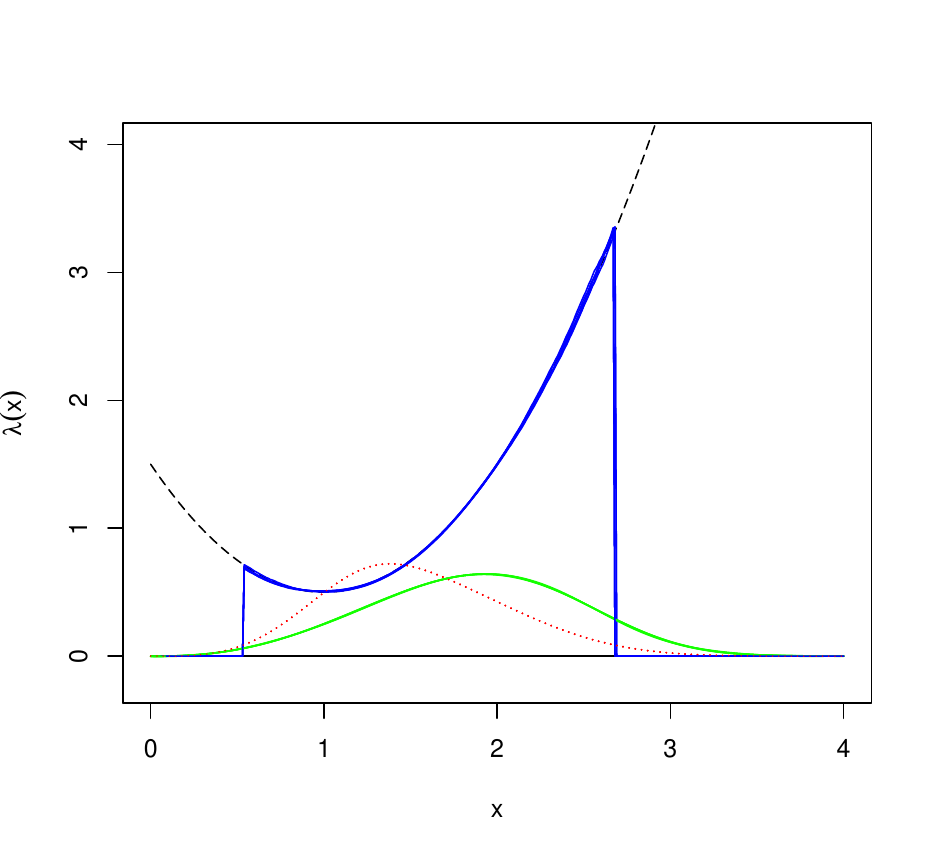}
\end{minipage}
\hfill
\begin{minipage}[c]{0.54\linewidth}
\[\kappa=1/2,\lambda(x)=(x-1)^2+1/2,\mathcal{I}=[0.5,2]\] 
\begin{tabular}{|c|n{2}{1}n{2}{1}|ccn{1}{3}|}
\hline
$n$& {$D_{\hat{m}}$}&{$D_{m_{opt}}$}& risk & sd& {or}\\
\hline
$10^2$& 5.4& 6.6& $0.080$ & $0.014$ & 1.06\\
$10^3$& 7.3& 8.9& $0.045$& $5.4\texttt{e-}3$& 1.01\\ 
$10^4$& 9.3 &11.3& $0.027$ & $2.3\texttt{e-}3$& 1.004\\ 
$10^5$&$11.7$& 16.3 & $0.014$ & $5.9\texttt{e-}4$& 1.003 \\
\hline
\end{tabular}
\end{minipage}
\textcolor{black}{- - : true $\lambda$} $\quad$
\textcolor{blue}{-- : estimated $\hat{\lambda}$} $\quad$
\textcolor{red}{. . : estimated $D_n$} $\quad$
\textcolor{green}{-- : estimated $\hnuml[\hat{m}]$}
\end{small}
\end{figure}

\begin{figure}[p]
\caption{Bacterial growth}\label{figure_bacteria}
\[\phi(x,t)=x \exp(ct), \;\lambda(x)=x^{\delta},\; Q(x,y)=\beta(20,20)\frac{y^{19}(x-y)^{19}}{y^{39}}\units{y\leq x}\]
\begin{minipage}[c]{0.45\linewidth}
\includegraphics[height=3.5cm,width=\linewidth]{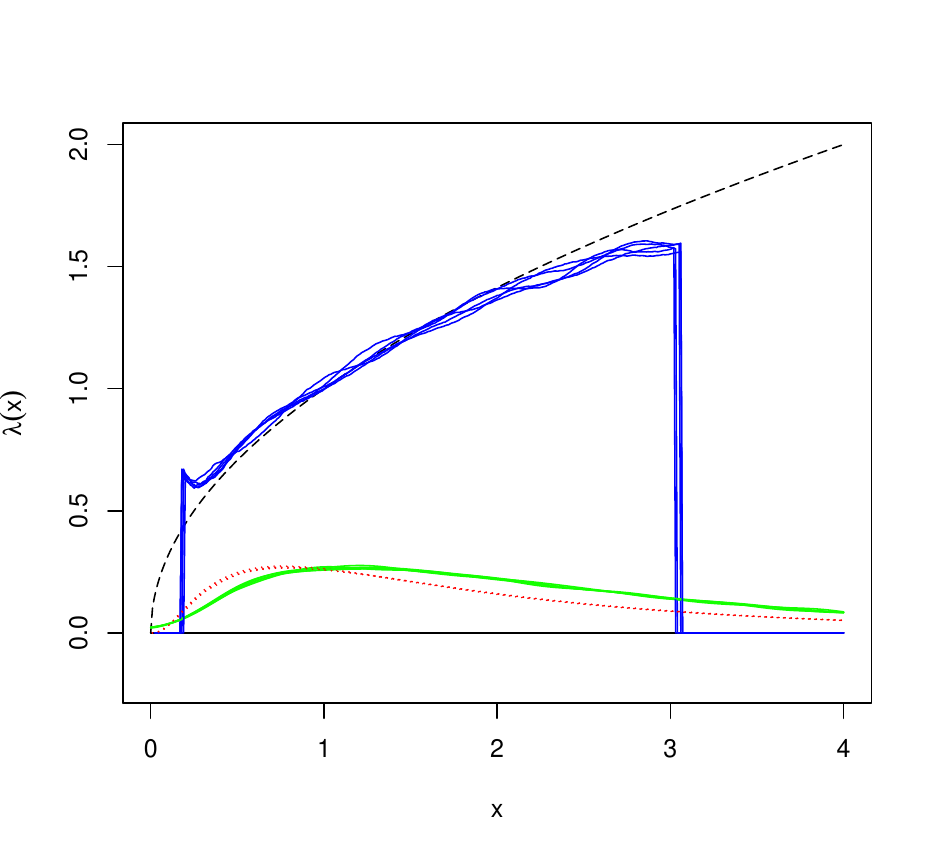}
\end{minipage}
\hfill
\begin{minipage}[c]{0.54\linewidth}
\[\lambda(x)=\sqrt{x}, c=1, \mathcal{I}=[0.5,2]\]
\begin{tabular}{|c|n{2}{1}n{2}{1}|ccn{1}{2}|}
\hline
$n$& {$D_{\hat{m}}$}&{$D_{m_{opt}}$}& risk & sd & {or}\\
\hline
$10^2$&3.8&6.1&$0.72$&$0.29$&1.04\\ 
$10^3$&6.6&7.8&$0.014$&$0.021$&2.32\\ 
$10^4$ & 11.2&10.7&$2.4\texttt{e-}3$&$1.5\texttt{e-}3$&1.97\\ 
    $10^5$&17.7&16.9&$9.0\texttt{e-}4$&$3.1\texttt{e-}4$&1.29\\
\hline
\end{tabular}
\end{minipage}

\begin{minipage}[c]{0.45\linewidth}
\includegraphics[height=3.5cm,width=\linewidth]{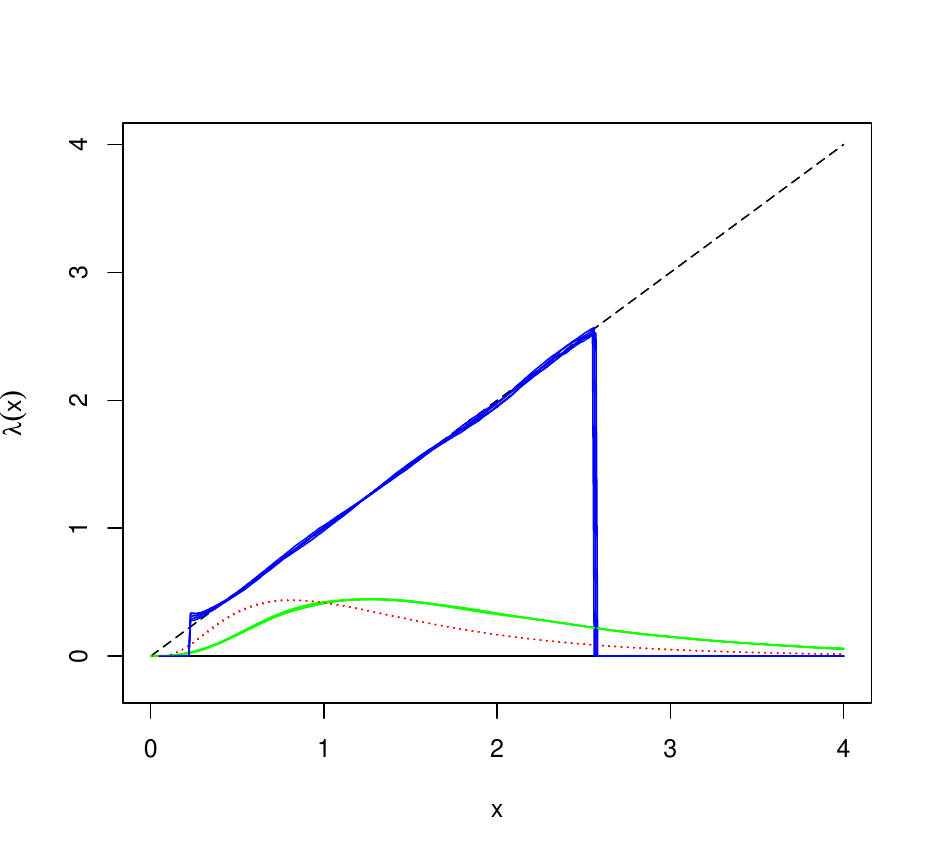}
\end{minipage}
\hfill
\begin{minipage}[c]{0.54\linewidth}
\[\lambda(x)=x,c=1,\mathcal{I}=[0.5,2]\]
\begin{tabular}{|c|n{2}{1}n{2}{1}|ccn{1}{2}|}
\hline
$n$&{$D_{\hat{m}}$}&{$D_{m_{opt}}$}& risk& sd & {or}\\
\hline
$10^2$&5.2&6.3&$0.59$&$0.26$&1.07\\ 
$10^3$&8.2&9.1&$9.2\texttt{e-}3$&$9.0\texttt{e-}3$&2.05\\ $10^4$&12.0&13.2&$1.2\texttt{e-}3$&$8.1\texttt{e-}4$&1.60\\ $10^5$&17.3&17.1&$2.2\texttt{e-}4$&$1.4\texttt{e-}4$&1.54 \\
\hline
\end{tabular}
\end{minipage}

\begin{minipage}[c]{0.45\linewidth}
\includegraphics[height=3.5cm,width=\linewidth]{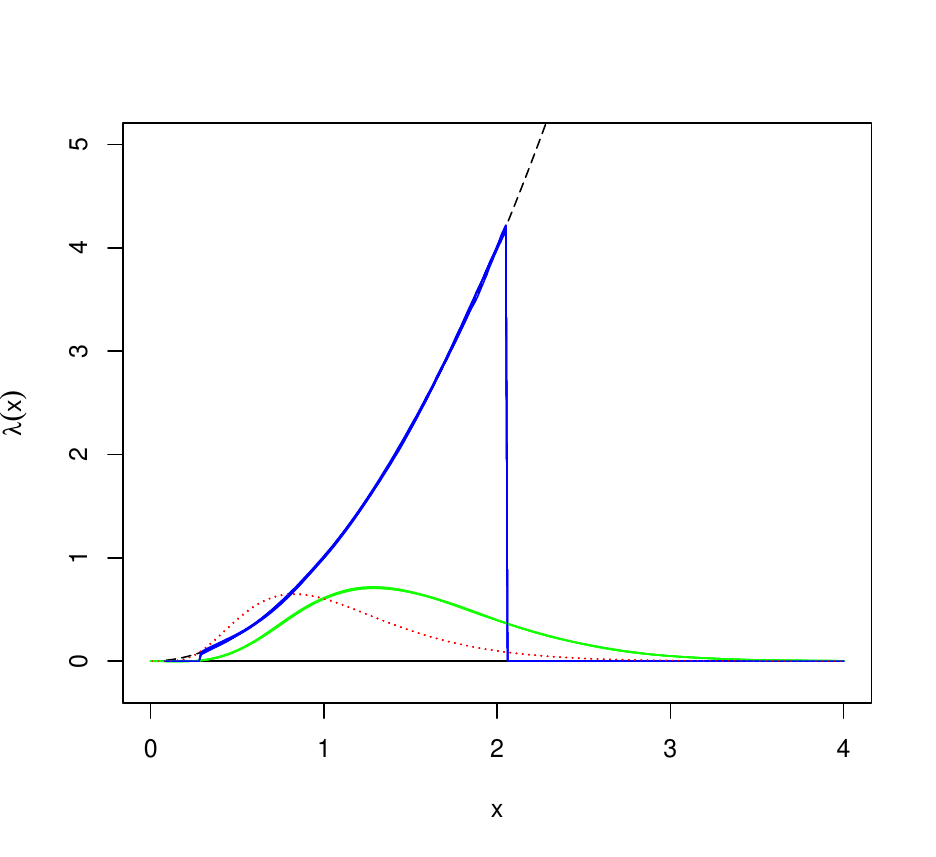}
\end{minipage}
\hfill
\begin{minipage}[c]{0.54\linewidth}
\[\lambda(x)=x^2, c=1, \mathcal{I}=[0.5,2]\]
\begin{tabular}{|c|n{2}{1}n{2}{1}|ccn{1}{3}|}
\hline
$n$& {$D_{\hat{m}}$}&{$D_{m_{opt}}$}& risk & sd&{or}\\
\hline
$10^2$&7.1&7.9&$2.64$&$0.35$&1.01\\ $10^3$&10.1&11.7&$1.48$&$0.21$&1.003\\ $10^4$&13.5&14.2&$0.41$&$0.094$&1.002\\
$10^5$&17.4&17.5&$2.8\texttt{e-}4$&$2.5\texttt{e-}4$&1.62 \\
\hline
\end{tabular}
\end{minipage}

\begin{minipage}[c]{0.45\linewidth}
\includegraphics[height=3.5cm,width=\linewidth]{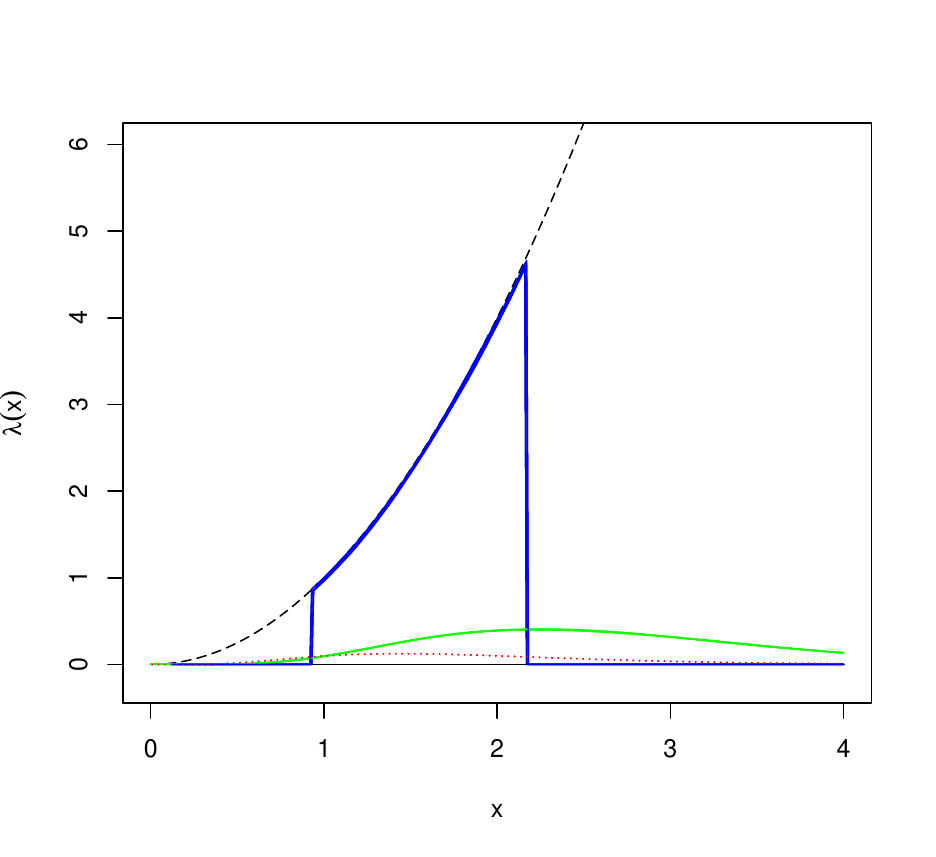}
\end{minipage}
\hfill
\begin{minipage}[c]{0.54\linewidth}
\[\lambda(x)=x^2, c=3, \mathcal{I}=[0.5,2]\]
\begin{tabular}{|c|n{2}{1}n{2}{1}|ccn{1}{3}|}
\hline
$n$& {$D_{\hat{m}}$}&{$D_{m_{opt}}$}& risk & sd&{or}\\
\hline
$10^2$&3.8&3.0&$4.29$&0&1\\
$10^3$&5.7&3.0&$4.29$&0&1\\ 
$10^4$&7.8&11.1&$1.56$&$0.11$&1.001\\
$10^5$&10.0&13.9&$0.087$&$0.0026$&1.001\\
\hline
\end{tabular}
\end{minipage}

\textcolor{black}{- - : true $\lambda$} $\quad$
\textcolor{blue}{-- : estimated $\hat{\lambda}$} $\quad$
\textcolor{red}{. . : estimated $D_n$}$\quad$
\textcolor{green}{--: estimated $\hnuml[\hat{m}]$}
\end{figure}

\begin{figure}
\caption{Convergence of the estimator (size of a marked bacteria)}\label{fig_cv}

\begin{tabular}{cc}
$\phi(x,t)=xe^{t},\quad \lambda(x)=x^2$
&$\phi(x,t)=xe^{3t},\quad,\lambda(x)=x^2$\\
\includegraphics[width=0.45\linewidth]{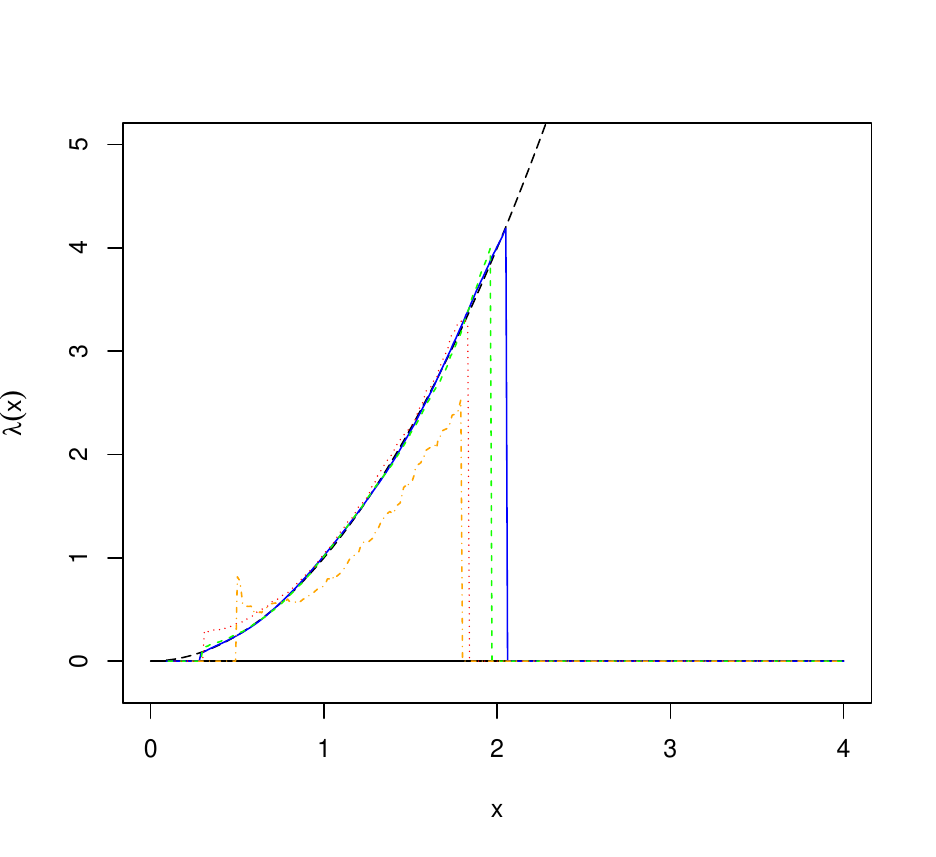}&

\includegraphics[width=0.45\linewidth]{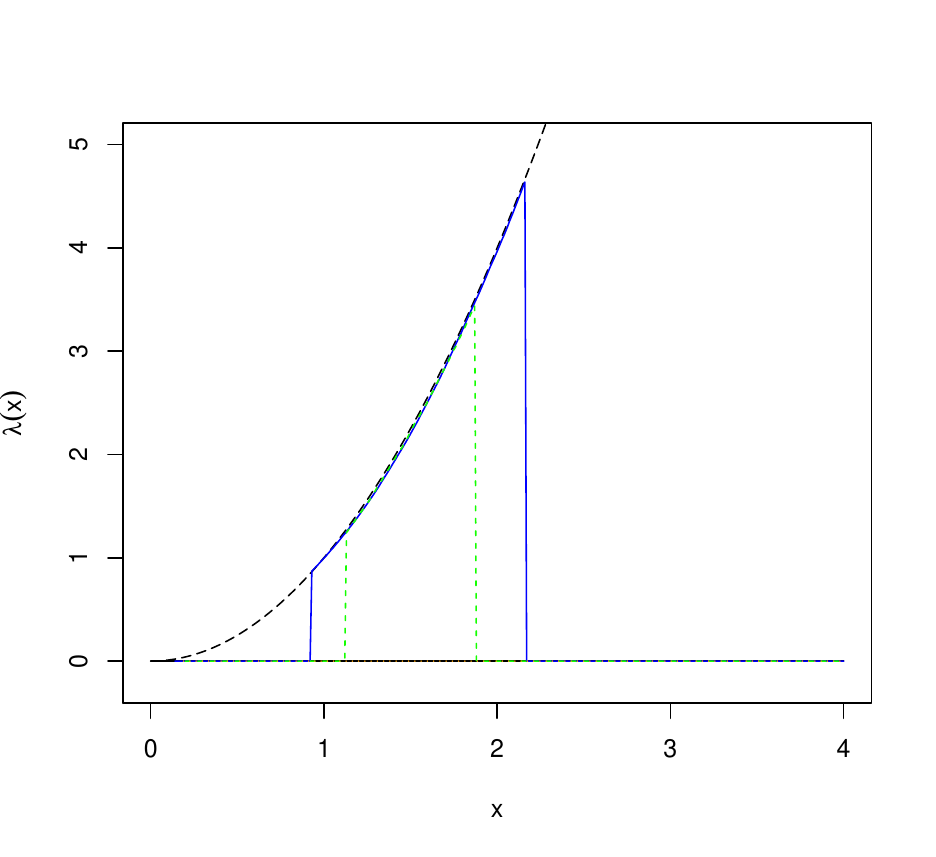}\\
Estimator's support increases with $n$& The estimator is null if $n\leq 10^3$
\end{tabular}
\textcolor{blue}{$\--$} $n=10^5$, \textcolor{green}{- -} $n=10^4$, \textcolor{red}{$\ldots$} $n=10^3$, \textcolor{orange}{$- .$} $n=10^2$. 
\end{figure}


\begin{thebibliography}{0}
\providecommand{\natexlab}[1]{#1}
\providecommand{\url}[1]{\texttt{#1}}
\expandafter\ifx\csname urlstyle\endcsname\relax
  \providecommand{\doi}[1]{doi: #1}\else
  \providecommand{\doi}{doi: \begingroup \urlstyle{rm}\Url}\fi

\end{thebibliography}


\begin{thebibliography}{99}
\bibitem[Arlot and Massart(2009)]{arlotmassart2009}
S.~Arlot and P.~Massart.
\newblock Data-driven calibration of penalties for least-squares regression.
\newblock \emph{Journal of Machine Learning Research}, 10:\penalty0 245--279,
  2009.


\bibitem[Aza\"{i}s and Genadot(2018)]{azais_genadot_2018}
R.~Aza\"{\i}s and A.~Genadot,
\newblock A new characterization of the jump rate for
              piecewise-deterministic {M}arkov processes with discrete
              transitions,
\newblock \emph{Comm. Statist. Theory Methods},
47,
    (8)\penalty0:\penalty0 1812--1829, 2018

\bibitem[Aza\"{i}s and Muller-Gueudin(2016)]{azais2016}
R.~Aza\"{i}s and A.~Muller-Gueudin.
\newblock Optimal choice among a class of nonparametric estimators of the jump
  rate for piecewise-deterministic {M}arkov processes.
\newblock \emph{Electron. J. Stat.}, 10\penalty0 (2):\penalty0 3648--3692,
  2016.
\newblock ISSN 1935-7524.


\bibitem[Aza\"{i}s et~al.(2014)Aza\"{i}s, Bardet, G\'{e}nadot, Krell, and
  Zitt]{azais_bardet_2014}
R.~Aza\"{i}s, J.-B. Bardet, A.~G\'{e}nadot, N.~Krell, and P.-A. Zitt.
\newblock Piecewise deterministic {M}arkov process---recent results.
\newblock In \emph{Journ\'{e}es {MAS} 2012}, volume~44 of \emph{ESAIM Proc.},
  pages 276--290. EDP Sci., Les Ulis, 2014.
  
\bibitem[Aza\"{i}s et~al.(2014)]{azais_dufour_gegout_2014}
 R.~Aza\"{\i}s, F.~Dufour and A.~G\'{e}gout-Petit,
\newblock Non-parametric estimation of the conditional distribution of
              the interjumping times for piecewise-deterministic {M}arkov
              processes,
\newblock \emph{Scandinavian Journal of Statistics. Theory and Applications},
41,
      \penalty0 (4):\penalty0 950--969, 2014.


  

\bibitem[Barron et~al.(1999)Barron, Birg{\'e}, and Massart]{barronbirge1999}
A.~Barron, L.~Birg{\'e}, and P.~Massart.
\newblock Risk bounds for model selection via penalization.
\newblock \emph{Probab. Theory Related Fields}, 113\penalty0 (3):\penalty0
  301--413, 1999.
\newblock ISSN 0178-8051.


\bibitem[Baxendale(2005)]{baxendale}
P.~Baxendale
\newblock Renewal theory and computable convergence rates for
              geometrically ergodic {M}arkov chains,
\newblock  \emph{The Annals of Applied Probability},
15, \penalty0 (1B):\penalty0 700--738, 2005.





\bibitem[Birg{\'e} and Massart(1998)]{birgemassart98}
L.~Birg{\'e} and P.~Massart.
\newblock Minimum contrast estimators on sieves: exponential bounds and rates
  of convergence.
\newblock \emph{Bernoulli}, 4\penalty0 (3):\penalty0 329--375, 1998.
\newblock ISSN 1350-7265.

\bibitem[Blanchard and Jadczyk(1995)]{blanchard_95}
P.~Blanchard and A..~Jadczyk,
\newblock Events and piecewise deterministic dynamics in event-enhanced
              quantum theory,
\newblock \emph{Physics Letters. A},
203\penalty0 (5-6):\penalty0 260--266, 1995

\bibitem[Cloez et~al.(2017)]{cloez_genadot_2017}
   B.~Cloez, R.~Dessalles, A.~Genadot, 
              F.~Malrieu, A.~Marguet and R.~Yvinec,
\newblock  Probabilistic and piecewise deterministic models in biology,
\newblock \emph{Journ\'{e}es {MAS} 2016 de la {SMAI}---{P}h\'{e}nom\`enes complexes et
              h\'{e}t\'{e}rog\`enes, ESAIM Proc. Surveys},
    \penalty0 (60):\penalty0 225--245,
2017.


\bibitem[Comte and Merlev{\`e}de(2002)]{comtemerlevede2002}
F.~Comte and F.~Merlev{\`e}de.
\newblock Adaptive estimation of the stationary density of discrete and
  continuous time mixing processes.
\newblock \emph{ESAIM Probab. Statist.}, 6:\penalty0 211--238 (electronic),
  2002.
\newblock New directions in time series analysis (Luminy, 2001).

\bibitem[Comte et~al.(2007)Comte, Genon-Catalot, and Rozenholc]{comtegenon2007}
F.~Comte, V.~Genon-Catalot, and Y.~Rozenholc.
\newblock Penalized nonparametric mean square estimation of the coefficients of
  diffusion processes.
\newblock \emph{Bernoulli}, 13\penalty0 (2):\penalty0 514--543, 2007.

\bibitem[Davis(1993)]{Dav93}
M.~H.~A. Davis.
\newblock \emph{Markov models and optimization}, volume~49 of \emph{Monographs
  on Statistics and Applied Probability}.
\newblock Chapman \& Hall, London, 1993.
\newblock ISBN 0-412-31410-X.



\bibitem[De Saporta et~al.(2016)]{saporta_dufour_zhang}
  B.~de Saporta, F.~Dufour and H.~Zhang,
\newblock\emph{ Numerical methods for simulation and optimization of piecewise
              deterministic {M}arkov processes},
\newblock Mathematics and Statistics Series, Application to reliability, ISTE, London; John Wiley \& Sons, Inc., Hoboken, NJ, 2016.


\bibitem[DeVore and Lorentz(1993)]{devorelorentz}
R.~A. DeVore and G.~G. Lorentz.
\newblock \emph{Constructive approximation}, volume 303 of \emph{Grundlehren
  der Mathematischen Wissenschaften [Fundamental Principles of Mathematical
  Sciences]}.
\newblock Springer-Verlag, Berlin, 1993.
\newblock ISBN 3-540-50627-6.

\bibitem[Doumic et~al.(2015)Doumic, Hoffmann, Krell, and
  Robert]{doumic_hoffmann_2015}
M.~Doumic, M.~Hoffmann, N.~Krell, and L.~Robert.
\newblock Statistical estimation of a growth-fragmentation model observed on a
  genealogical tree.
\newblock \emph{Bernoulli}, 21\penalty0 (3):\penalty0 1760--1799, 2015.
\newblock ISSN 1350-7265.






\bibitem[Dumas et~al.(2002)Dumas, Guillemin, and Robert]{DGR02}
V.~Dumas, F.~Guillemin, and P.~Robert.
\newblock A {M}arkovian analysis of additive-increase multiplicative-decrease
  algorithms.
\newblock \emph{Adv. in Appl. Probab.}, 34\penalty0 (1):\penalty0 85--111,
  2002.
\newblock ISSN 0001-8678.


\bibitem[Farid and David(1999)]{farid_davis_99}
M.~Farid and M.~Davis,
\newblock Optimal consumption and exploration: a case study in  piecewise-deterministic {M}arkov modelling,
Optimal control and differential games (Vienna, 1997),
   \newblock \emph{Annals of Operations Research},
88\penalty0 :\penalty0 121--137, 1999.


\bibitem[Fujii(2013)]{takayuki2013}
T.~Fujii.
\newblock Nonparametric estimation for a class of piecewise-deterministic
  {M}arkov processes.
\newblock \emph{J. Appl. Probab.}, 50\penalty0 (4):\penalty0 931--942, 2013.
\newblock ISSN 0021-9002.


\bibitem[Guillemin et~al.(2004)Guillemin, Robert, and Zwart]{GRZ04}
F.~Guillemin, P.~Robert, and B.~Zwart.
\newblock A{IMD} algorithms and exponential functionals.
\newblock \emph{Ann. Appl. Probab.}, 14\penalty0 (1):\penalty0 90--117, 2004.
\newblock ISSN 1050-5164.


\bibitem[Hodara et~al.(2018] {hokrlo}
P.~Hodara, Krell, and E.~L\"{o}cherbach,
\newblock Non-parametric estimation of the spiking rate in systems of
	interacting neurons,
\newblock \emph{Statistical Inference for Stochastic Processes An
	International Journal Devoted to Time Series Analysis and the
	Statistics of Continuous Time Processes and Dynamical Systems},
21,\penalty0 (1):\penalty081--111, 2018

\bibitem[H{\"o}pfner et~al(2006).]{hopfnerbrodda2006},
    R.~H{\"o}pfner, and K.~Brodda,
\newblock A stochastic model and a functional central limit theorem for information processing in large systems of neurons,
\newblock \emph{Journal of Mathematical Biology},
 52\penalty0 (4):\penalty0 439--457, 2006.


\bibitem[Klein and Rio(2005)]{kleinrio2005}
T.~Klein and E.~Rio.
\newblock Concentration around the mean for maxima of empirical processes.
\newblock \emph{Ann. Probab.}, 33\penalty0 (3):\penalty0 1060--1077, 2005.
\newblock ISSN 0091-1798.


\bibitem[Krell(2016)]{kre2}
N.~Krell.
\newblock Statistical estimation of jump rates for a piecewise deterministic
  {M}arkov processes with deterministic increasing motion and jump mechanism.
\newblock \emph{ESAIM Probab. Stat.}, 20:\penalty0 196--216, 2016.
\newblock ISSN 1292-8100.



\bibitem[Lauren\c{c}ot and Perthame(2009)]{laurencot_perthame_2009}
P.~Lauren\c{c}ot and B.~Perthame.
\newblock Exponential decay for the growth-fragmentation/cell-division
  equation.
\newblock \emph{Commun. Math. Sci.}, 7\penalty0 (2):\penalty0 503--510, 2009.
\newblock ISSN 1539-6746.

\bibitem[Robert et~al.(2015) Hoffmann, Krell, Aymerich, Robert and Doumic]{hof}
L.~Robert, M.~Hoffmann, N.~Krell, S.~Aymerich, and  J.~Robert,  and M.~Doumic, .
\newblock Division Control in Escherichia coli is Based on a Size-sensing rather than Timing Mechanism.
\newblock \emph{MC Biology, BioMedCentral,}, 12:17,  2015.



\bibitem[Meyer(1990)]{meyer}
Y.~Meyer.
\newblock \emph{Ondelettes et op\'erateurs. {I}}.
\newblock Actualit\'es Math\'ematiques. [Current Mathematical Topics]. Hermann,
  Paris, 1990.
\newblock ISBN 2-7056-6125-0.
\newblock Ondelettes. [Wavelets].

\bibitem[Renault et~al(2017)]{renault_thieullen_trelat_2017},
V.~Renault, M.~Thieullen and E.~Tr\'{e}lat,
\newblock \emph{Optimal control of infinite-dimensional piecewise
              deterministic {M}arkov processes and application to the
              control of neuronal dynamics via optogenetics},
\newblock Networks and Heterogeneous Media,
    12\penalty0 (2):\penalty0 417--459, 2017.





\bibitem[Rudnicki and Tyran-Kami\'{n}ska(2015)]{MR3295786}
R.~Rudnicki and M.~Tyran-Kami\'{n}ska.
\newblock Piecewise deterministic {M}arkov processes in biological models.
\newblock In \emph{Semigroups of operators---theory and applications}, volume
  113 of \emph{Springer Proc. Math. Stat.}, pages 235--255. Springer, Cham,
  2015.


\bibitem[Tsybakov(2004)]{tsybakov}
A.~B. Tsybakov.
\newblock \emph{Introduction \`a l'estimation non-param\'etrique}, volume~41 of
  \emph{Math\'ematiques \& Applications (Berlin) [Mathematics \&
  Applications]}.
\newblock Springer-Verlag, Berlin, 2004.
\newblock ISBN 3-540-40592-5.

\bibitem[Viennet(1997)]{viennet97}
G.~Viennet.
\newblock Inequalities for absolutely regular sequences: application to density
  estimation.
\newblock \emph{Probab. Theory Related Fields}, 1997.

\end{thebibliography}
\end{document}